\documentclass[a4paper,12pt]{amsart}
\usepackage{amsfonts}
\usepackage{enumerate}
\usepackage{amsmath, amsthm, amscd, amssymb}
\usepackage{txfonts}
\usepackage{pdfsync}
\usepackage[title]{appendix}
\usepackage[all]{xy}
\usepackage{latexsym,graphicx}
\usepackage[latin1]{inputenc}
\usepackage{xspace}
\usepackage{array}
\usepackage{newclude}
\usepackage{hyperref}
\usepackage{url}
\usepackage{enumitem}
\hypersetup{
 colorlinks,
 citecolor=black,
 linkcolor=magenta,
 urlcolor=black}
 
\renewcommand*{\HyperDestNameFilter}[1]{\jobname-#1} 
\numberwithin{equation}{section}
\usepackage[centering, marginparwidth=2cm]{geometry}
\usepackage{marginnote}

\addtocontents{toc}{\setcounter{tocdepth}{1}}

\usepackage[nameinlink]{cleveref}

\usepackage{tikz-cd}
\usepackage{leftidx}
\usetikzlibrary{positioning}
\usepackage{dsfont}
\usepackage{tikz}
\usetikzlibrary{matrix,arrows,decorations.pathmorphing,positioning}

\newcommand\blfootnote[1]{%
\begingroup
\renewcommand\thefootnote{}\footnote{#1}%
\addtocounter{footnote}{-1}%
\endgroup
}


\theoremstyle{plain}
\newtheorem{theor}{Theorem}[section]

\newtheorem{conj}[theor]{Conjecture}

\newtheorem{lem}[theor]{Lemma}
\newtheorem{defi}[theor]{Definition}

\newtheorem{prop}[theor]{Proposition}
\newtheorem{cor}[theor]{Corollary}
\theoremstyle{definition}

\newtheorem{rmk}[theor]{Remark}
\newtheorem{question}[theor]{Question}
\theoremstyle{remark}


\newcommand{\RR}{\mathbb{R}}

\newcommand{\NN}{\mathbb{N}}
\newcommand{\ZZ}{\mathbb{Z}}
\newcommand{\QQ}{\mathbb{Q}}

\newcommand{\HH}{{\mathbf H}}
\newcommand{\LL}{{\mathbf L}}

\newcommand{\an}{\textnormal{an}}

\DeclareMathOperator{\PU}{\operatorname{PU}}
\DeclareMathOperator{\SU}{\operatorname{SU}}
\DeclareMathOperator{\U}{\operatorname{U}}
\DeclareMathOperator{\SO}{\operatorname{SO}}
\DeclareMathOperator{\PO}{\operatorname{PO}}

\newcommand{\ad}{\textnormal{ad}}

\newcommand{\MT}{\mathbf{MT}}

\newcommand{\bs}{\backslash}

\newcommand{\Ad}{\operatorname{Ad}}

\DeclareMathOperator{\Hom}{Hom}

\DeclareMathOperator{\Ker}{Ker}

\DeclareMathOperator{\Lie}{Lie}

\DeclareMathOperator{\Gl}{GL}


\newcommand{\Aut}{\operatorname{Aut}}

\newcommand{\codim}{\operatorname{codim}}

\newcommand{\VV}{{\mathbb V}}
\newcommand{\WW}{{\mathbb W}}

\newcommand{\HL}{\textnormal{HL}}
\newcommand{\atyp}{\textnormal{atyp}}

\newcommand{\typ}{\textnormal{typ}}
\newcommand{\pos}{\textnormal{pos}}

\newcommand{\N}{\mathbb{N}}

\newcommand{\Z}{\mathbb{Z}}
\newcommand{\Q}{\mathbb{Q}}

\newcommand{\R}{\mathbb{R}}

\newcommand{\Oo}{\mathcal{O}}

\newcommand{\bS}{\mathbb{S}}

\usepackage[normalem]{ulem}

\newcommand{\C}{\mathbb{C}}

\newcommand{\GL}{\mathbf{GL}}
\newcommand{\SL}{\mathbf{SL}}

\newcommand{\G}{{\mathbf G}}
\newcommand{\bfH}{{\mathbf H}}



\newcommand{\Gam}{\Gamma}
\newcommand{\Lam}{\Lambda}

\usepackage{microtype}
\usepackage{fullpage}

\begin{document}

\title{Rich representations and superrigidity}\blfootnote{\emph{2020 Mathematics Subject Classification}. 14G35, 22E40, 03C64, 14P10, 32H02, 32M15.}\blfootnote{\emph{Key words and phrases}. Complex hyperbolic lattices, Superrigidity, Hodge theory and Mumford--Tate domains, Functional transcendence, Unlikely intersections. }\date{\today} 
\author{Gregorio Baldi, Nicholas Miller, Matthew Stover, and Emmanuel Ullmo}

\date{\today}

\newcommand{\Addresses}{{
\bigskip
\footnotesize

\textsc{CNRS, IMJ-PRG, Sorbonne Universit\'{e}, 4 place Jussieu, 75005 Paris, France}\par\nopagebreak
\textit{E-mail address}, G.~Baldi: \texttt{baldi@imj-prg.fr}, \texttt{baldi@ihes.fr} \\

\textsc{Department of Mathematics, University of Oklahoma, Norman, OK 73019 }\par\nopagebreak
\textit{E-mail address}, N. Miller: \texttt{nickmbmiller@ou.edu} \\

\textsc{Department of Mathematics, Temple University, Philadelphia, PA 19122}\par\nopagebreak
\textit{E-mail address}, M. Stover: \texttt{mstover@temple.edu} \\

\textsc{I.H.E.S., Universit\'{e} Paris-Saclay, CNRS, Laboratoire Alexandre Grothendieck. 35 Route de Chartres, 91440 Bures-sur-Yvette (France)}\par\nopagebreak
\textit{E-mail address}, E.~Ullmo: \texttt{ullmo@ihes.fr}
}}
\begin{abstract}
We investigate and compare applications of the Zilber--Pink conjecture and dynamical methods to rigidity problems for arithmetic real and complex hyperbolic lattices. Along the way we obtain new general results about reconstructing a variation of Hodge structure from its typical Hodge locus that may be of independent interest. Applications to Siu's immersion problem are also discussed, the most general of which (\Cref{thm:geod2geod}) only requires the hypothesis that infinitely many closed geodesics map to proper totally geodesic subvarieties under the immersion.
\end{abstract}
\maketitle

\tableofcontents
\section{Introduction and Motivation}

A cornerstone in the representation theory of lattices in Lie groups is the Margulis superrigidity theorem \cite[p.\ 2]{margulisbook}. Indeed, the implications of superrigidity are profound, including arithmeticity of irreducible lattices in all noncompact semisimple Lie groups except $\PO(1,n)$ and $\PU(1,n)$, i.e., for all but real and complex hyperbolic lattices. Specifically, Margulis proved superrigidity of irreducible lattices in semisimple Lie groups of real rank at least two. Using the theory of harmonic maps, this was later extended to the rank one groups $\mathrm{Sp}(1,n)$ and $\mathrm{F}_4^{(-20)}$ by Corlette \cite{MR965220} and Gromov--Schoen \cite{MR1215595}. While the harmonic maps method was extended to give a new proof in higher rank \cite{MR1223224}, it is open (and quite relevant in the context of the blend of techniques used in this paper) whether the dynamical methods of Margulis can give an independent proof in the rank one cases.

To be precise, let $\Gam$ be a lattice in a Lie group $G$. A representation $\rho$ from $\Gam$ to a topological group $H$ is said to \emph{extend} if there is a continuous homomorphism $\widetilde{\rho} : G \to H$ so that $\rho$ is the restriction of $\widetilde{\rho}$ to $\Gam$ under its lattice embedding in $G$. Then $\Gam$ is \emph{superrigid} if every unbounded, Zariski dense representation of $\Gam$ to a connected, adjoint simple algebraic group $H$ over a local field $k$ of characteristic zero extends. It is known that there are lattices in $\PO(1,n)$ and $\PU(1,n)$ that are not superrigid; \cite[Sec.\ 5]{zbMATH04086464} for real hyperbolic examples and \cite[Sec.\ 22]{MR586876} or \cite[Sec.\ 1]{KlinglerRigid} for some results in the complex hyperbolic setting. The study of representations of real and complex hyperbolic lattices is very difficult and rich in open questions, and the extent to which superrigidity fails has yet to be made precise. For example, one motivation for this paper is the first part of a question asked by David Fisher \cite[Qtn.\ 3.15]{2020arXiv200302956F}:

\begin{question}[D.\ Fisher]\label{qtn:Fisher}
Let $\Gamma \subset G$ be a lattice where $G= \SO(1,n)$ or $\SU(1,n)$. What conditions on a representation $\rho : \Gamma \to \Gl_m(k)$ imply that $\rho$ extends or almost extends? What conditions on $\Gamma$ imply that $\Gamma$ is arithmetic?
\end{question}

In particular, this paper builds on the techniques from \cite{BFMS, BFMS2, bu} to explore \emph{geometric} conditions on a representation that allow one to use tools from dynamics and/or Hodge theory to prove new rigidity phenomena. The conditions we provide, which address \Cref{qtn:Fisher} through the behavior of totally geodesic submanifolds under maps related to the representation, have also been considered in studying the infinite volume setting; see for example \cite{Hee1, Hee2}. There is some history of success along these lines. For just one family of examples, some classes of representations studied in the literature have \emph{maximal} geometric behavior, where two distinct instances are:
\begin{itemize}

\item Maximal for the \emph{Toledo invariant} (Koziarz--Maubon \cite{MR3612003} and many others)

\item Maximal for their \emph{limit set} (Besson, Courtois and Gallot \cite{MR1738042} and Shalom \cite{zbMATH01545111})

\end{itemize}
Here we study a class of representations generalizing those shown to be superrigid in \cite{BFMS, BFMS2}, namely those that are \emph{geodesically rich}.

\medskip

Before delving into technical definitions and stating our main results, we discuss a primary application of our methods that can be stated much more simply.

Since it arises several times, we briefly recall what it means for $\Gamma \subset G$ to be an arithmetic lattice of the simplest kind. These are well-studied in the literature, so we are terse. In the real hyperbolic case, this means that there is a totally real number field $F$ with integer ring $\mathcal{O}_F$ and a quadratic form $q$ on $F^{n+1}$ so that $\Gam$ is commensurable with $\mathcal{G}(\mathcal{O}_F)$, where $\mathcal{G}$ is the $F$-algebraic group $\PO(q)$. In the complex hyperbolic case, there is a totally imaginary extension $E$ of the totally real field $F$ so that $\Gam$ is commensurable with $\mathcal{G}(\mathcal{O}_F)$, where $\mathcal{G}$ is now $\PU(h)$ for a hermitian form $h$ on $E^{n+1}$ (which is still an $F$-algebraic group). In each case, it is necessarily the case that the form has signature $(1,n)$ at one place of $F$ and is definite at all other places. The associated locally symmetric spaces have an abundance of totally geodesic submanifolds, which are the main object of study in this paper.

\subsection{Siu's immersion problem}

An original motivation for studying the more general rigidity questions considered in this paper is Siu's \emph{immersion problem} \cite[Prob.\ (b), p.\ 182]{siu}. Let $\mathbb{B}^n$ denote the unit ball in $\mathbb{C}^n$ with its Bergman metric. If $\Gam$ is a lattice in the holomorphic isometry group $\PU(1,n)$ of $\mathbb{B}^n$, then $S_\Gam$ will denote the \emph{ball quotient} $\Gam \bs \mathbb{B}^n$.

\begin{question}[Siu, 1985]\label{siuquestion}
Is it true that every holomorphic embedding between compact ball quotients of dimension at least $2$ must have totally geodesic image?
\end{question}

To our knowledge the only previous result is by Cao and Mok \cite{caomok}, which answered \Cref{siuquestion} in the affirmative for $f : \Gam \bs \mathbb{B}^n \to \Lam \bs \mathbb{B}^m$ when $m < 2 n$. See \cite[Thm.\ 1.7]{BFMS2} and the surrounding references for more on Siu's analogous \emph{submersion problem}. The following is our main contribution toward a positive answer to Siu's immersion problem.

\begin{theor}\label{mainthmpre}
Let $f: S_{\Gamma_1} \to S_{\Gamma_2}$ be an immersive holomorphic map between arithmetic ball quotients such that $f(S_{\Gamma_1})$ does not lie in any smaller totally geodesic subvariety of $S_{\Gamma_2}$. Suppose that $S_{\Gam_1}$ contains a proper totally geodesic subvariety of positive dimension. Then the following are equivalent:
\begin{enumerate}
\item The map $f$ is covering.
\item There is a generic sequence of positive-dimensional proper totally geodesic subvarieties $Y_\ell \subset S_{\Gamma_1}$ such that each $f(Y_\ell)$ lies is a proper totally geodesic subvariety $W_\ell \subset S_{\Gamma_2}$.
\end{enumerate}
\end{theor}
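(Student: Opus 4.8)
The plan is to prove the two implications separately; the content is entirely in $(2)\Rightarrow(1)$. For $(1)\Rightarrow(2)$: an immersive holomorphic map forces $\dim S_{\Gamma_1}<\dim S_{\Gamma_2}$, so if $f$ is a totally geodesic immersion then $W:=f(S_{\Gamma_1})$ is automatically a \emph{proper} totally geodesic subvariety of $S_{\Gamma_2}$; on the other hand an arithmetic ball quotient containing one positive-dimensional proper totally geodesic subvariety contains a \emph{generic} sequence $(Y_\ell)$ of such (translate the given one by a dense set of elements of $\Comm(\Gamma_1)$ and argue the translates cannot all lie in a proper subvariety), and taking $W_\ell=W$ — or $W_\ell=f(Y_\ell)$, which is totally geodesic as a composition of totally geodesic maps — yields (2).

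For $(2)\Rightarrow(1)$, lift $f$ to a $\rho$-equivariant holomorphic immersion $\widetilde f\colon\mathbb{B}^n\to\mathbb{B}^m$, where $\rho=f_*\colon\Gamma_1\to\PU(1,m)$ is the induced (injective) homomorphism, and fix lifts $\widetilde{Y_\ell}\subset\mathbb{B}^n$ and $\widetilde{W_\ell}\subset\mathbb{B}^m$ with $\widetilde f(\widetilde{Y_\ell})\subseteq\widetilde{W_\ell}$; we may take $W_\ell$ to be the smallest totally geodesic subvariety of $S_{\Gamma_2}$ containing $f(Y_\ell)$. Then $\rho\bigl(\Stab_{\Gamma_1}(\widetilde{Y_\ell})\bigr)$ preserves $\widetilde{W_\ell}$, hence lies in the proper reductive subgroup $\Stab_{\PU(1,m)}(\widetilde{W_\ell})\subsetneq\PU(1,m)$, and it is unbounded since $\pi_1(Y_\ell)$ is infinite and $\rho$ injective with discrete image; thus the generic sequence $(Y_\ell)$ exhibits $\rho$ as a \emph{geodesically rich} representation. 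Equivalently, inside $X:=S_{\Gamma_1}\times S_{\Gamma_2}$ the graph $\Gamma_f$ (of dimension $n$) meets the totally geodesic $Y_\ell\times W_\ell$ in dimension $\ge\dim Y_\ell>\dim Y_\ell+\dim W_\ell-m$ since $W_\ell$ is proper, so $\Gamma_f$ carries a Zariski-dense family of \emph{atypical} totally geodesic intersections.

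I would then invoke the main results of the paper in either of two equivalent guises. Representation-theoretically: the superrigidity theorem for geodesically rich representations forces $\mathbf G:=\overline{\rho(\Gamma_1)}^{\,\Zar}$ to be a proper subgroup of $\PU(1,m)$ — were it everything, $\rho$ would be unbounded, Zariski dense and geodesically rich, hence would extend to a continuous homomorphism $\PU(1,n)\to\PU(1,m)$, necessarily injective as $\PU(1,n)$ is simple and therefore not Zariski dense for $n<m$, a contradiction — and more precisely $\mathbf G$ is the stabilizer of a complex totally geodesic $\widetilde W=\mathbb{B}^{n'}\subsetneq\mathbb{B}^m$ (complex because $f$ is holomorphic, with $n\le n'<m$), whence $\widetilde f(\mathbb{B}^n)\subseteq\widetilde W$. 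Hodge-theoretically: $f$ pulls back the tautological weight-one $\PU(1,m)$-variation $\VV$ on $S_{\Gamma_2}$, the condition $f(Y_\ell)\subseteq W_\ell$ says exactly that each $Y_\ell$ lies in the Hodge locus $\HL(S_{\Gamma_1},f^*\VV)$, hence this locus is Zariski dense; after the reduction just made one checks the $Y_\ell$ form its \emph{typical} part, and the paper's theorem reconstructing a variation from its typical Hodge locus identifies $f^*\VV$, and so $f$, with the totally geodesic one. Either way $f$ factors through a proper totally geodesic $W'\subset S_{\Gamma_2}$ with $n\le\dim W'<m$: if $\dim W'=n$ then $f\colon S_{\Gamma_1}\to W'$ is an immersive holomorphic map of equidimensional ball quotients, hence a local isometry (uniqueness of the complete Kähler--Einstein metric, or the Schwarz lemma) and in particular a totally geodesic immersion; if $n<\dim W'$ the hypotheses persist with $S_{\Gamma_2}$ replaced by $W'$ (the minimal choice of $W_\ell$ gives $W_\ell\subseteq W'$, and the $Y_\ell$ with $W_\ell\subsetneq W'$ still form a generic sequence), and we finish by induction on $\dim S_{\Gamma_2}$.

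The main obstacle is the step showing $\mathbf G\subsetneq\PU(1,m)$: this is precisely where the paper's genuinely new input is needed — the superrigidity theorem for geodesically rich representations, or equivalently the reconstruction of a variation of Hodge structures from its typical Hodge locus together with the Zilber--Pink-type statement disposing of the atypical part. Two subsidiary points require care: verifying that the recursion does not stall, i.e.\ that after passing to $W'$ one still has a generic sequence $Y_\ell$ with $f(Y_\ell)$ in a proper totally geodesic subvariety of $W'$ (equivalently, that the pulled-back Hodge locus does not degenerate to the whole base); and matching the possible types of totally geodesic subvariety — complex sub-balls versus real hyperbolic forms — with the sub--Mumford--Tate domains produced by the machinery, which is also what pins down the base case via the classification of totally geodesic holomorphic maps between ball quotients.
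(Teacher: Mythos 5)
Your Hodge-theoretic branch is essentially the paper's own proof (\Cref{sectionimmersion}): the paper also passes to the graph $\Gamma_f\subset S_n\times S_m$, makes exactly your codimension count showing that $\Gamma_f\cap(Y_\ell\times W_\ell)$ is atypical precisely because $W_\ell$ is proper, and then applies the geometric Zilber--Pink theorem (\Cref{geomzp00}) to place $\Gamma_f$ inside a strict totally geodesic $S_0\subset S_n\times S_m$, from which total geodesicity of $f$ follows. The only cosmetic difference is the endgame: the paper normalizes up front by replacing $S_m$ with the special closure of $f(S_n)$ and then reads off the conclusion from the two projections of $S_0$, whereas you run a downward induction on $\dim S_{\Gamma_2}$; these are interchangeable, and you rightly flag the one genuine subtlety (that the hypothesis must persist after passing to the special closure, i.e.\ that the $W_\ell$ do not all degenerate to the closure itself). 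The paper additionally gives an independent dynamical proof of a variant via Ratner and equidistribution of forms in \Cref{sectionwithratner}, which you do not pursue.

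Your representation-theoretic ``equivalent guise,'' however, is not actually available, and the two routes are not equivalent. Superrigidity of geodesically rich representations (\Cref{mainthm12}) with archimedean target $\bfH=\PU(1,m)$ requires either strong compatibility of $(\R,\PU(1,m))$ with $\PU(1,n)$ in the sense of \Cref{def:comp} --- which is exactly the condition that fails to be automatic in the archimedean case (the paper only gets it for free when $k$ is nonarchimedean, which is why it then concludes mere integrality, not extension) --- or cohomological rigidity of $\rho=f_*$, which is not among the hypotheses of the theorem. So you cannot conclude that $\overline{\rho(\Gamma_1)}$ is proper by citing superrigidity for rich representations; and even granting properness, the further claim that this Zariski closure is the stabilizer of a complex sub-ball (rather than, say, contained in a parabolic) needs a separate argument. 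The paper sidesteps all of this by working with special subvarieties of the product rather than with the representation, which is the branch of your proposal you should keep.
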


The positive-dimensional hypothesis on the subvarieties of $S_{\Gam_1}$ is to rule out the trivial counterexample of points. We offer two proofs of \Cref{mainthmpre}, one using homogeneous dynamics which gives a slightly weaker statement in \Cref{sectionwithratner}, and the second using Hodge theory and functional transcendence in \Cref{sectionimmersion}, as an application of \Cref{mainthmhl} below.

For arithmetic lattices of simplest kind, \Cref{mainthmpre} reduces \Cref{siuquestion} to checking a single dimension.

\begin{cor}
If \Cref{siuquestion} has a positive answer when $S_{\Gam_1}$ is a $2$-dimensional arithmetic ball quotient of simplest type, then it has a positive answer for $S_{\Gam_1}$ an arithmetic ball quotient of simplest type of any dimension $n \ge 2$.
\end{cor}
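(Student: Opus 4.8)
The plan is to deduce the general case from the two-dimensional case by restricting $f$ to a generic sequence of $2$-dimensional totally geodesic subvarieties of the source and then feeding this into condition (2) of \Cref{mainthmpre}. So let $f\colon S_{\Gamma_1}\to S_{\Gamma_2}$ be a holomorphic embedding of compact ball quotients with $S_{\Gamma_1}$ of simplest type and of dimension $n$; the goal is to show that $f(S_{\Gamma_1})$ is totally geodesic. If $n=2$ this is exactly the hypothesis, so assume $n\ge 3$. A holomorphic embedding between compact ball quotients of equal dimension is a biholomorphism and has totally geodesic image trivially, so we may assume $\dim S_{\Gamma_2}>n$; in particular $\dim S_{\Gamma_2}>2$, which we use at the end.

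First I would produce the sequence of slices. Since $S_{\Gamma_1}$ is of simplest type it arises from a hermitian form $h$ over a CM field, and for each $d$ with $1\le d\le n-1$ the rank-$(d{+}1)$ hermitian subspaces of $h$ of signature $(1,d)$ cut out totally geodesic subvarieties of dimension $d$ whose normalizations are again arithmetic ball quotients of simplest type, and there are infinitely many of them. Taking $d=2$ and using the equidistribution of totally geodesic subvarieties in arithmetic ball quotients, I would extract a generic sequence $Y_\ell\subset S_{\Gamma_1}$ of positive-dimensional totally geodesic subvarieties, each of simplest type, which are moreover proper since $2<n$. In particular this exhibits a proper positive-dimensional totally geodesic subvariety, so the standing hypothesis of \Cref{mainthmpre} is satisfied for $f$.

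Next, restricting $f$ gives, for every $\ell$, a holomorphic map $Y_\ell\to S_{\Gamma_2}$ out of a $2$-dimensional arithmetic ball quotient of simplest type. By the assumed two-dimensional case of \Cref{siuquestion}, the image $f(Y_\ell)$ is totally geodesic, and since $\dim f(Y_\ell)\le 2<\dim S_{\Gamma_2}$ it is a \emph{proper} totally geodesic subvariety $W_\ell\subsetneq S_{\Gamma_2}$. Thus the pairs $(Y_\ell,W_\ell)$ verify condition (2) of \Cref{mainthmpre} for $f$, and the implication (2)$\Rightarrow$(1) gives that $f$ is a totally geodesic immersion; in particular $f(S_{\Gamma_1})$ is totally geodesic, which is the positive answer to \Cref{siuquestion} in dimension $n$.

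The point that needs care — and which I expect to be the main obstacle — is making the two-dimensional hypothesis genuinely applicable to $f|_{Y_\ell}$. A totally geodesic subvariety $Y_\ell$ need not be smooth: its normalization $\widetilde Y_\ell$ is the honest $2$-dimensional ball quotient, and the induced map $\widetilde Y_\ell\to S_{\Gamma_2}$ is a priori only an immersion, whereas \Cref{siuquestion} is phrased for embeddings. I would resolve this either by checking that the immersive version of the two-dimensional case follows from the embedded one (so the literal hypothesis already suffices), or by first passing to a finite cover of $S_{\Gamma_1}$ — which changes neither the hypotheses of \Cref{mainthmpre} nor whether $f$ has totally geodesic image, since a finite cover of $S_{\Gamma_1}$ surjects onto it — in which a generic sequence of $2$-dimensional totally geodesic subvarieties is embedded, using separability of the relevant geometrically finite subgroups. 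The remaining ingredients (the hermitian-form construction of totally geodesic subvarieties of simplest type, their being again of simplest type, and their equidistribution) are standard.
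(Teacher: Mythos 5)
Your overall route is exactly the one the paper intends: since $S_{\Gamma_1}$ is of simplest type, it carries a generic sequence of $2$-dimensional totally geodesic subvarieties of simplest type; restricting $f$ to these and invoking the $2$-dimensional case of \Cref{siuquestion} produces proper totally geodesic $W_\ell\subset S_{\Gamma_2}$ containing $f(Y_\ell)$, and then the implication (2)$\Rightarrow$(1) of \Cref{mainthmpre} finishes the job. The paper states the corollary without proof (it is presented as an immediate consequence of \Cref{mainthmpre} after the remark that simplest-type quotients have totally geodesic subvarieties in every codimension), so the framework you describe is the right one, and you are right to flag the embedding-versus-immersion point as the genuine delicacy.

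However, your fix (b) does not actually close that gap. Passing to a finite cover $\pi:S_{\Gamma_1}'\to S_{\Gamma_1}$ in which $Y_\ell$ lifts to an embedded $Y_\ell'$ does not change the map you ultimately need to be an embedding. Indeed, the map from the normalization $\widetilde Y_\ell$ to $S_{\Gamma_2}$ factors as $\widetilde Y_\ell\to S_{\Gamma_1}'\xrightarrow{\ \pi\ } S_{\Gamma_1}\xrightarrow{\ f\ } S_{\Gamma_2}$, and this composite is literally identical to $\widetilde Y_\ell\to S_{\Gamma_1}\xrightarrow{\ f\ } S_{\Gamma_2}$: the cover only alters an intermediate stage. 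Since $f$ is injective, this composite is injective precisely when $\widetilde Y_\ell\to S_{\Gamma_1}$ already is, so the embeddedness of $Y_\ell'$ in the cover $S_{\Gamma_1}'$ is irrelevant, and in general the composite remains a non-injective immersion. To genuinely reduce to the embedded $2$-dimensional case one must instead pass to a finite cover of $S_{\Gamma_2}$, separating the self-intersections of the immersed surface using a suitable separability property of $\Gamma_2$, or else establish directly that the immersive $2$-dimensional case follows from the embedded one. Either route needs more argument than you give, and in the complex hyperbolic setting the required separability for $\Gamma_2$ is not a formality (unlike for real hyperbolic lattices of simplest type). Your fix (a), stated as a goal rather than a proof, is the honest one, but it is not yet an argument. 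Everything else — the reduction to $n\ge 3$, $\dim S_{\Gamma_2}>n>2$ so that the $W_\ell$ are proper, the hermitian-form construction and equidistribution of the $2$-dimensional slices, and the final appeal to \Cref{mainthmpre}(2)$\Rightarrow$(1) — is correct and matches the paper's intent.
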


\begin{rmk}
By \cite{caomok}, the first open case is of a holomorphic embedding $f: S_{\Gamma_1} \to S_{\Gamma_2}$ with $\dim S_{\Gamma_1}=2, \dim S_{\Gamma_2}=4$. Here \Cref{mainthmpre} implies that it suffices to prove that there is an infinite collection of distinct totally geodesic curves $C_i \subset S_{\Gamma_1}$ such that each $f(C_i)$ lies in some strict totally geodesic subvariety $Y_i\subset S_{\Gamma_2}$.
\end{rmk}

There are various similar generalizations that follow from \Cref{richvhs0}. For example, a period map $\psi : S_{\Gamma_1} \to \Gamma \backslash D$ such that the $f^{-1}(Y_i)$ are totally geodesic subvarieties of $S_{\Gamma_1}$ for a generic sequence of Mumford--Tate subdomains $Y_i \subset \Gamma \backslash D$ is necessarily an isomorphism. Here `generic' means that they are not all contained in a finite union of maximal totally geodesic subvarieties of $S$.

\medskip

From a dynamical point of view, the dimension at least two hypothesis in the previous results is natural and critical in that it allows one to access tools from unipotent dynamics. From a Hodge theoretic point of view, one typically works with a complex subvariety. However, there is still something one can say for closed geodesics.

\begin{theor}\label{thm:geod2geod}
Let $S=\Gamma _n \backslash \mathbb{B}^n$ be a ball quotient with an immersive holomorphic map
\begin{displaymath}
f: S \longrightarrow \Gamma _m \backslash \mathbb{B}^m
\end{displaymath}
such that $f(S)$ does not lie in any smaller totally geodesic subvariety of $\Gamma _m \backslash \mathbb{B}^m$. Let $\{C_\ell\}$ be a generic sequence of distinct closed geodesics in $S$ in the sense that they are not all contained in a finite union of maximal totally geodesic subvarieties of $S$. Assume that for each $\ell$ there exits a strict complex totally geodesic subvariety $Y_\ell \subset \Gamma _m \backslash \mathbb{B}^m$ such that $f(C_\ell) \subset Y_\ell$. Then $f$ is a totally geodesic immersion.
\end{theor}

See \Cref{corcor} for the proof. Note that \Cref{thm:geod2geod} applies for nonarithmetic $\Gamma$, since all finite volume ball quotients contain infinitely many free homotopy classes of closed geodesics. That $f$ must take certain closed geodesics into a proper totally geodesic subspace is a very strong hypothesis, but it is not entirely unprecedented. Indeed one can show that some of the maps associated with surjections between complex hyperbolic lattices take a certain (necessarily finite) collection of totally geodesic subspaces of $S_{\Gam_n}$ to totally geodesic subspaces of $S_{\Gam_m}$. We close this section with a final question that would generalize \Cref{thm:geod2geod}.

\begin{question}
Let $\Gamma $ be any lattice in $\PU(1,n)$, $n>1$, and $f: S_\Gamma \to \Gamma_D \backslash D$ be any period map (e.g., $\Gamma_D \backslash D$ is a hermitian symmetric space and $f$ is holomorphic). If $f$ sends a generic sequence of closed geodesics $\{C_\ell\}$ of $S_\Gamma$ to closed geodesics of $\Gamma_D \backslash D$, is then $f(S_\Gamma) $ a totally geodesic subvariety?
\end{question}

\subsection{Rich representations and the main theorem}

Let $\Gamma \subset G$ be a torsion-free arithmetic lattice, where $G$ is $\SU(1,n)$ or $\SO(1,n)$, and let $X$ denote the symmetric space associated with $G$. Then $S_\Gam$ will denote the manifold $\Gam \bs X$, which is a smooth manifold that is also a smooth variety in the complex hyperbolic case\footnote{Since both contexts are relevant to this paper, we use the term submanifold when considering $S_\Gam$ as a differentiable manifold and subvariety when $S_\Gam$ is complex hyperbolic and the submanifold is also a subvariety for the natural algebraic structure.}. We assume that $S_\Gamma$ contains one, and therefore infinitely many, proper totally geodesic submanifolds\footnote{This claim follows easily from the fact that the associated algebraic group commensurates the lattice. In the other direction, if $S_\Gamma$ contains infinitely many maximal totally geodesic submanifolds of dimension at least $2$, then $\Gamma$ is arithmetic; see \cite{BFMS, BFMS2, bu}.} of real dimension at least $2$. 
When this is the case, we say that $S_\Gamma$ (or $\Gam$) is \emph{geodesically rich}. Let $\{S_{\Gamma_i}\}_{i\in \N}$ be a sequence of distinct immersed maximal totally geodesic submanifolds, where maximal means that the only totally geodesic submanifold strictly containing $S_{\Gamma_i}$ is $S_\Gamma$ itself.

The group $\Gamma_i$ is the stabilizer in $\Gam$ of a totally geodesic subspace $Y_i$ of $X$, and the restriction of $\Gam_i$ to $Y_i$ induces a surjective homomorphism from $\Gam_i$ onto the fundamental group of $S_{\Gamma_i}$. The kernel of this homomorphism encodes the action of $\Gam_i$ on the normal bundle to $Y_i$ in $X$. The main purpose of this paper is to introduce the notion of geodesically rich representations, which are morally the ones that ``preserve'' a significant amount of the rich structure of $\Gamma$:

\begin{defi}\label{richdef}
Let $k$ be a local field and $\bfH$ be a semisimple $k$-algebraic group without compact factors. Let $\rho: \Gamma \to \bfH(k)$ be a representation with unbounded, Zariski dense image. We say that $\rho$ is \emph{geodesically rich} (just \emph{rich} when it is clear from context) if the following are satisfied:
\begin{itemize}
\item $\Gamma$ is geodesically rich, and
\item there are infinitely many $i$ so that $\dim \bfH_i < \dim \bfH$, where $\HH_i$ is defined by
\begin{equation}
\mathbf{H}_i\coloneqq\overline{\rho(\Gamma_i)}\subset \bfH
\end{equation}
for each $i$, where closure is with respect to the Zariski topology.
\end{itemize}
\end{defi}

Special consideration will be given in this paper to the case where $S_\Gam$ and the subspaces $S_{\Gam_i}$ are all complex hyperbolic.

\begin{defi}\label{complexrichdef}
Let $\Gamma \subset G=\PU(1,n)$ be a complex hyperbolic lattice and $\rho: \Gamma \to \bfH(k)$ be a rich representation. Then $\rho$ is \emph{complex geodesically rich} if infinitely many of the $S_{\Gamma_i}$ for which $\dim \bfH_i < \dim \bfH$ are complex hyperbolic.
\end{defi}

Broadly, this paper is concerned with identifying situations where the following question has a positive answer.

\begin{question}\label{mainconj}
Do rich representations extend? Equivalently, are there nontrivial rich representations (i.e., other than the case where $\bfH(k) = G$ and $\rho$ is given by conjugation)?
\end{question}

Before stating one of our primary results toward a positive answer, we recall a definition that appears in \cite{zbMATH07623922} that will be used to give a generalization/abstract version of the method of \cite{BFMS, BFMS2}.

\begin{defi}\label{def:comp}
Fix a parabolic subgroup $P\subset G$ with unipotent radical $U$. A pair $(k, \bfH)$ as above is \emph{strongly compatible} with $G$ if for any subgroup $\{1\} \neq \bfH^\prime \subset \bfH$ and every continuous group homomorphism $\tau: P \to (N_\bfH(\bfH^\prime)/\bfH^\prime) (k)$ we have that $U \le \Ker(\tau)$.
\end{defi}

\begin{rmk}\label{remnonarchimedean}
If $k$ is nonarchimedean, then any pair $(k,\bfH)$ is strongly compatible with $G$.
\end{rmk}

Our main theorem establishes \Cref{mainconj} in two cases:

\begin{theor}\label{mainthm12}
Let $\Gam$ be an arithmetic lattice in $G$ which is either $\PO(1,n)$ or $\PU(1,n)$, $\bfH$ be a connected, adjoint algebraic group over a local field $k$ of characteristic zero, and $\rho: \Gamma \to \bfH(k)$ be a rich representation. If either
\begin{enumerate}
\item $\bfH$ is strongly compatible (in the sense of \Cref{def:comp}) and $k$-simple, or
\item $\Gamma$ is complex hyperbolic, $\rho$ is complex geodesically rich, and $\rho$ is cohomologically rigid (i.e., $H^1(\Gamma, \operatorname{Ad}_{\bfH} \circ \rho)=0$),
\end{enumerate}
then $\rho$ extends.
\end{theor}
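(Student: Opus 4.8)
The plan is to treat both cases by producing, from a rich representation $\rho$, a non-trivial totally geodesic (equivalently, Mumford--Tate) structure, and then invoking existing rigidity/arithmeticity machinery. In case (1), I would follow the abstract form of the method of \cite{BFMS, BFMS2} encoded by strong compatibility. Since $\rho$ is rich, there are infinitely many $i$ with $\dim \bfH_i < \dim \bfH$, where $\bfH_i = \overline{\rho(\Gamma_i)}$. The subgroup $\Gamma_i$ stabilizes a totally geodesic subspace $Y_i \subset X$, and inside $\Gamma_i$ sits (a finite-index subgroup of) the unipotent radical $U_i$ of a parabolic of the subgroup of $G$ stabilizing $Y_i$; the cusps of $S_{\Gamma_i}$ give plenty of unipotents. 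Using the density of $\rho(\Gamma)$ and the fact that $\bfH_i$ is a proper subgroup, I would produce an algebraic subgroup $\bfH' \subsetneq \bfH$ and a continuous homomorphism $\tau$ from a parabolic $P$ of $G$ to $N_\bfH(\bfH')/\bfH'(k)$ whose restriction to $\Gamma \cap P$ is essentially $\rho$; strong compatibility then forces $U \le \Ker(\tau)$, i.e.\ $\rho$ kills a unipotent one-parameter subgroup's worth of the lattice. Since $\rho(\Gamma)$ is Zariski dense and $\Gamma$ is an irreducible lattice in a simple group, the normal subgroup generated by such unipotents is all of $\Gamma$ unless $\rho$ factors in a controlled way; the $k$-simplicity of $\bfH$ and the classification of such factorizations (as in \cite{BFMS, BFMS2, bu}, using Margulis/normal subgroup considerations) then upgrades this to the conclusion that $\rho$ extends to a continuous homomorphism $G \to \bfH(k)$. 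The non-archimedean case is immediate from \Cref{remnonarchimedean}: strong compatibility is automatic, so only the archimedean local fields require the above analysis, and there one also uses that unbounded Zariski-dense representations into simple groups over $\R$ or $\C$ are the relevant targets for harmonic-map/Hodge-theoretic input.

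For case (2), the route is through Hodge theory. A cohomologically rigid, Zariski-dense, unbounded representation $\rho : \Gamma \to \bfH(k)$ with $k$ archimedean underlies (after possibly replacing $\bfH$ by a form and using that $S_\Gamma$ is a smooth quasi-projective variety in the complex hyperbolic case) a polarized $\ZZ$-variation of Hodge structure on $S_\Gamma$: by Corlette's theorem the representation is reductive, there is an equivariant harmonic map to the symmetric space of $\bfH$, cohomological rigidity forces it to be pluriharmonic/holomorphic, and Simpson's correspondence together with the rigidity promotes the associated local system to a $\ZZ$-VHS with $\bfH$ containing its generic Mumford--Tate group. Now complex richness enters: for infinitely many complex hyperbolic $S_{\Gamma_i}$ the algebraic monodromy $\bfH_i$ of the restricted VHS is a proper subgroup of $\bfH$, so the totally geodesic subvarieties $S_{\Gamma_i} \subset S_\Gamma$ are atypical for the VHS --- each lies in a component of the Hodge locus of positive codimension. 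This produces a Zariski-dense (generic) sequence of atypical special subvarieties, and I would invoke \Cref{mainthmhl}/\Cref{richvhs0} (the reconstruction result advertised in the abstract, which says such a VHS is ``determined by'' its typical Hodge locus / is forced to be of a very special form) to conclude that the VHS, hence $\rho$, is pulled back from the period map associated to an ambient Hermitian structure; concretely the VHS is of the type coming from a totally geodesic/Shimura-theoretic source, and $\rho$ consequently extends to $G$.

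The key step that I expect to be the main obstacle is, in case (2), showing that the infinitely many proper subgroups $\bfH_i = \overline{\rho(\Gamma_i)}$ genuinely yield a \emph{generic} (Zariski-dense) sequence of atypical special subvarieties in the precise sense required to apply the reconstruction theorem \Cref{mainthmhl}, rather than, say, an infinite family all contained in a single proper special subvariety --- here one must use that the $S_{\Gamma_i}$ were chosen maximal and that geodesic richness of $\Gamma$ (coming from commensurator density) spreads them out, together with a monodromy argument to rule out degenerate configurations. A secondary but real difficulty is the passage from ``$\rho$ cohomologically rigid'' to ``$\rho$ underlies a $\ZZ$-VHS'': one must handle the non-compact (cusped) case and the possibility that $k = \C$ rather than $\R$ carefully, invoking the appropriate versions of the Corlette--Simpson theory and the rigidity input, and one must check that $\bfH$ itself --- not merely a subgroup --- is the target after these manipulations, which is where Zariski density and adjointness of $\bfH$ are used. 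In case (1) the analogous obstacle is verifying that the homomorphism $\tau$ with the required target $N_\bfH(\bfH')/\bfH'(k)$ actually exists, i.e.\ translating ``$\bfH_i$ proper for infinitely many $i$'' into the parabolic-homomorphism hypothesis of \Cref{def:comp}; this is the technical heart and is where the structure of cusp subgroups of $\Gamma_i$ and the geometry of $X$ are essential.
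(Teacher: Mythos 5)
Your proposal is in the right general territory but differs structurally from the paper in both parts, and in part (2) there is a genuine gap.

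For case (1), the paper's proof (\Cref{thmrichsecond}) does not proceed by producing a parabolic $P$, cusp unipotents, a subgroup $\bfH'\subsetneq\bfH$, and a homomorphism $\tau:P\to N_\bfH(\bfH')/\bfH'(k)$ to which \Cref{def:comp} is applied. Instead it follows the abstract extension criterion of \cite[Thm.\ 1.6, Rmk.\ 1.7]{BFMS}: from richness one extracts (i) a noncompact almost-simple proper $W\subset G$ (the unipotently-generated stabilizer type of the $Y_i$), (ii) an irreducible $k$-rational subrepresentation $V$ of $\wedge^d\Ad_\bfH$ on which the lines $\ell_i=\wedge^d\Lie(\bfH_i)$ project nontrivially, and (iii) a $W$-invariant measure on the suspension space $\Gamma\backslash(G\times\mathbb{P}(V))$ projecting to $\mu_{\Gamma\backslash G}$, obtained as a weak-$*$ limit of pushforwards of the homogeneous $W$-ergodic measures $\mu_i$. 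Strong compatibility is a hypothesis that makes that criterion go through; it is not something one certifies by exhibiting a particular $\tau$. Your sketch never mentions the suspension space, $W$-invariant measures, or weak-$*$ limits --- which are the whole content --- and it is unclear how the cusp-based version could close, since \Cref{def:comp} quantifies over all $\tau$ rather than a specific one built from $\rho$.

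For case (2) there is a substantive error in where the atypicality lives. You assert that complex richness makes the $S_{\Gamma_i}$ \emph{atypical} special subvarieties for the VHS $\widehat{\VV}$ attached to $\rho$. They are not: each $S_{\Gamma_i}$ is a \emph{typical} special subvariety for $\widehat{\VV}$, and also typical for the tautological $\VV_1$ coming from the arithmetic structure of $S_\Gamma$; individually there is nothing atypical to exploit. The paper's mechanism (\Cref{mainthmhl}, via \Cref{lalala}) is that atypicality appears only after forming the product period map for $\VV_1\oplus\widehat{\VV}$: a common typical intersection of both factors becomes an unexpected coincidence in $\Gamma_1\backslash D_1\times\Gamma_2\backslash D_2$. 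The correct input to geometric Zilber--Pink is therefore the equality $\HL(S_\Gamma,\VV_1^\otimes)_{\pos,\typ}=\HL(S_\Gamma,\widehat{\VV}^\otimes)_{\pos,\typ}$, not atypicality of the $S_{\Gamma_i}$ for $\widehat{\VV}$. You also flag but do not resolve the passage from ``cohomologically rigid'' to ``$\Z$VHS''; the paper handles it by invoking \Cref{cor:Integral} --- which is a consequence of part (1), so (2) depends on (1) in a way your sketch omits --- to land the traces in $\cO_K$, and then taking restriction of scalars to produce $\widehat\rho$ and the honest $\Z$VHS $\widehat{\VV}$.
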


Notice that the former condition is just on the pair $(k,\bfH)$, whereas the latter depends only on $\rho$. The proof of (1), which follows from the methods developed in \cite{BFMS, BFMS2}, is in \Cref{sectioncomp}. The proof of (2), contained in \Cref{mainthmonvhs}, is related to the study of variations of Hodge structures and in fact it will be related to the so-called \emph{Zilber--Pink conjecture} \cite{2021arXiv210708838B}. Regarding (2), it might be possible to study some nonrigid representations via the so called \emph{factorization theorem}, see for example \cite[Thm.\ 10]{zbMATH00066548} and various more recent generalizations. The fact that $(k, \bfH)$ is always compatible when $k$ is nonarchimedean has the following fairly direct corollary.

\begin{cor}\label{cor:Integral}
Suppose that $\Gam \subset \PU(1,n)$ is an arithmetic lattice and that $\rho : \Gam \to \bfH(k)$ is a rich representation to a $k$-simple algebraic group $\bfH$ defined over a number field $k$. Then $\rho$ is integral.
\end{cor}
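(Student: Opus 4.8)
The plan is to establish integrality one finite place at a time by applying \Cref{mainthm12}(1) to the localizations of $\rho$, exploiting that over a nonarchimedean field every pair is strongly compatible (\Cref{remnonarchimedean}) and that a connected Lie group admits no nontrivial continuous homomorphism to a totally disconnected group.

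First I would reduce to the case where $\bfH$ is absolutely simple. Writing $\bfH \cong \Res_{\ell/k}\bfH'$ with $\ell/k$ a finite extension and $\bfH'$ an absolutely simple adjoint group over $\ell$, the canonical identification $\bfH(k) = \bfH'(\ell)$ turns $\rho$ into a Zariski dense representation $\rho : \Gamma \to \bfH'(\ell)$, which is again rich since the groups $\bfH_i = \overline{\rho(\Gamma_i)} \subset \bfH$ correspond to $\ell$-subgroups of $\bfH'$ of the same dimension. As $\Res_{\ell/k}$ identifies $S$-integral points of $\bfH'$ over $\ell$ with $S$-integral points of $\bfH$ over $k$, the representation $\rho$ is integral into $\bfH(k)$ if and only if it is integral into $\bfH'(\ell)$. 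Renaming $(\ell,\bfH')$ as $(k,\bfH)$, it thus suffices to prove that for every finite place $v$ of $k$ the localization $\rho_v : \Gamma \to \bfH(k_v)$ has bounded image.

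Suppose this fails for some finite $v$. Then $\bfH$ must be $k_v$-isotropic, for otherwise $\bfH(k_v)$ is compact and $\rho_v$ is automatically bounded; hence $\bfH \times_k k_v$ is a connected, adjoint, $k_v$-simple group without compact factors. Moreover $\rho_v$ has Zariski dense image, because Zariski density of a subset of $k$-points is preserved under the field extension $k \subset k_v$, and for the same reason $\overline{\rho_v(\Gamma_i)} = \bfH_i \times_k k_v$, so $\dim \overline{\rho_v(\Gamma_i)} = \dim \bfH_i < \dim \bfH$ for infinitely many $i$. Thus $\rho_v$ is a rich representation in the sense of \Cref{richdef}. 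Since $k_v$ is nonarchimedean, \Cref{remnonarchimedean} says $(k_v, \bfH\times_k k_v)$ is strongly compatible with $G$, so \Cref{mainthm12}(1) applies and $\rho_v$ extends: there is a continuous homomorphism $\widetilde\rho_v : G \to \bfH(k_v)$ restricting to $\rho_v$ on $\Gamma$. But $G = \PU(1,n)$ is connected while $\bfH(k_v)$, with its $v$-adic topology, is totally disconnected, so $\widetilde\rho_v$ — and hence $\rho_v$ — is trivial. This contradicts the Zariski density of $\rho_v(\Gamma) = \rho(\Gamma)$ in the positive-dimensional group $\bfH$. Therefore $\rho_v$ is bounded at every finite place, i.e., $\rho$ is integral.

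The only point requiring genuine care is the bookkeeping of the first reduction: one must verify that richness really is inherited under restriction of scalars and under base change to the completions, that the anisotropic (compact) places — where \Cref{mainthm12} does not apply but $\rho_v$ is bounded for trivial reasons — are correctly excluded, and that the notion of integrality is unchanged when $(k,\bfH)$ is replaced by $(\ell,\bfH')$. Everything else is a formal consequence of \Cref{mainthm12}(1) together with \Cref{remnonarchimedean}.
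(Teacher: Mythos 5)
Your argument is correct and is exactly the ``fairly direct'' deduction the paper intends (the paper does not even write it out): at each finite place $v$ the localization $\rho_v$ is either bounded for trivial reasons or, by \Cref{remnonarchimedean} and \Cref{mainthm12}(1), would extend to a continuous homomorphism from the connected group $G$ to the totally disconnected group $\bfH(k_v)$, which is impossible given Zariski density. Your preliminary reduction via $\Res_{\ell/k}$ to the absolutely simple case is a sensible way to guarantee $k_v$-simplicity after completion, and the bookkeeping you flag (richness under base change, anisotropic places, invariance of integrality) is handled correctly.
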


\Cref{cor:Integral} should be viewed as a complement to the fundamental recent work of Esnault and Groechenig \cite{MR3874695} that proves integrality of certain cohomologically rigid local systems. Note that \Cref{cor:Integral} does not assume cohomological rigidity, only that the representation is defined over a number field. Moreover, note that there are homomorphisms from complex hyperbolic lattices onto surface groups \cite{zbMATH01960047}, hence not all representations of complex hyperbolic lattices are definable over a number field. As noticed in \cite{BFMS2}, the version of \Cref{cor:Integral} that implicitly follows from the results there makes the main theorem of that paper independent of \cite{MR3874695}.

\begin{rmk}\label{rem:NoGeod}
In contrast with \Cref{thm:geod2geod}, we now give a simple cautionary example regarding generalizing richness to closed geodesics. Let $G$ be $\PO(1,n)$ for $n \ge 3$ or $\PU(1,n)$ for $n \ge 2$, and suppose that $\Gam_1, \Gam_2$ are arithmetic lattices in $G$ of simplest type for which there is a surjection $\rho : \Gam_1 \to \Gam_2$ with infinite kernel; such representations are known to exist in both settings. Choose a maximal subgroup $\Delta \subset \Gam_2$ associated with a proper totally geodesic submanifold of $S_{\Gam_2}$. Then $\rho^{-1}(\Delta)$ contains the kernel of $\rho$, and therefore is Zariski dense in $G$. Consequently, $\rho^{-1}(\Delta)$ contains a profinitely open collection of regular semisimple elements \cite{MR3738090}, hence they preserve no totally geodesic subspace of $X$ except their axis of translation, and thus the associated closed geodesics on $S_\Gam$ are maximal. In the complex hyperbolic setting the associated closed geodesics are \emph{generic} in the sense that their union is Zariski dense in $S_\Gam$, hence this provides a fairly rich collection of $1$-dimensional maximal totally geodesic subspaces.
\end{rmk}

Representations as in \Cref{rem:NoGeod} are unbounded and Zariski dense with connected, adjoint target, but they cannot extend since they have infinite kernel. They have a well-distributed sequence of maximal closed geodesics for which the associated elements in $\rho(\Gam_1)$ all preserve a totally geodesic subspace of the target, hence the representation could be considered rich in analogy with the higher-dimensional setting. While one might reasonably interpret this as saying the $1$-dimensional case is hopeless, \Cref{thm:geod2geod} indicates that stronger geometric hypotheses can still lead to a rigidity theorem.

\medskip

We now comment on how richness may differentiate between real and complex hyperbolic lattices. For example, we do not know the answer to the following: 

\begin{question}\label{qtn:LinearRich}
Is every faithful linear representation of an arithmetic lattice in $\PU(1,n)$ of simplest kind rich?
\end{question}

Bending deformations of lattices in $\PO(1,n)$ \cite[Sec.\ 5]{zbMATH04086464} easily imply that \Cref{qtn:LinearRich} is false for real hyperbolic lattices of simplest kind. However, Goldman--Millson rigidity \cite{MR884798} implies that there are no analogous deformations in the complex hyperbolic setting. It seems that examples giving a negative answer to \Cref{qtn:LinearRich} would come from representations of a new kind. On the other hand, a positive answer would (for example) prove superrigidity of unbounded, Zariski dense, and faithful representations to connected, adjoint targets.

Since \Cref{mainthm12} proves that all rich representations to connected, adjoint $k$-simple groups over nonarchimedean fields of characteristic zero are bounded, all other results in this paper consider the cases where $k$ is $\R$ or $\C$. For the remainder of this introduction we consider rich representations of complex hyperbolic lattices with coefficients in $\R$.

\subsection{Holomorphic and VHS results}
 
To enter in the world of variations of Hodge structures, we focus on \emph{complex hyperbolic lattices} $\Gamma \subset G=\PU(1,n)$ with $n>1$. See \Cref{hodgeprel} for the various notions from Hodge theory used in this subsection. In this setting we can restate \Cref{complexrichdef} in a more geometric way. Suppose for example that $H$ is the automorphism group of a Hermitian symmetric domain $X_H$, $\rho: \Gamma \to H$ takes values in a lattice $\Gamma_H \subset H$, and is induced by taking the induced map on fundamental groups for a holomorphic map
\begin{equation}\label{eq02}
f: S_\Gamma \longrightarrow \Gamma_H\backslash X_H.
\end{equation}
Then the complex richness of $\rho$ translates into the following: $f(S_{\Gamma_i})$ lies in a strict complex totally geodesic submanifold $Y_i \subset \Gamma_H\backslash X_H$ for infinitely many $i$.

We remark here that the setting described in \eqref{eq02} is a special case of the variation of Hodge structure (VHS, from now on) formalism. The next theorem shows that rich representations underlying an integral pure polarized VHS extend.

\begin{theor}\label{richvhs0}
If $\rho$ is a complex rich representation underlying a direct $\C$-factor of an irreducible $\Q$VHS, then $\rho$ extends.
\end{theor}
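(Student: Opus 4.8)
The plan is to combine the Zilber--Pink/functional-transcendence machinery for Hodge loci (the abstract input here being \Cref{mainthmhl}, which reconstructs a VHS from a generic sequence of its typical Hodge loci) with the rigidity theory of period maps in order to show that the period map associated to $\rho$ must factor through a point's worth of ``wiggle room'', i.e. that $\rho$ extends to $G=\PU(1,n)$. First I would set up the geometric picture: since $\rho$ underlies a direct $\C$-factor of an irreducible $\Q$VHS $\VV$ on $S_\Gam$, there is a period map $\psi : S_\Gam \to \Gamma_{\bfH} \backslash D$ to a connected component of a Mumford--Tate domain, with $\bfH$ the generic Mumford--Tate group (up to isogeny the Zariski closure of the monodromy, using that the factor is a direct summand so the algebraic monodromy is semisimple and normal in $\bfH$). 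Complex richness of $\rho$ says that for infinitely many $i$ the image $\psi(S_{\Gam_i})$ lies in a strict complex totally geodesic — hence Mumford--Tate — subdomain $Y_i \subset \Gamma_{\bfH}\backslash D$; equivalently, $S_{\Gam_i} \subset \psi^{-1}(Y_i)$ sits inside a component of the Hodge locus attached to a smaller Mumford--Tate subgroup $\bfH_i \subsetneq \bfH$.

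The key step is to arrange that this infinite family of totally geodesic subvarieties $S_{\Gam_i}$ is a \emph{generic} sequence of \emph{typical} components of the Hodge locus, so that \Cref{mainthmhl} (or \Cref{richvhs0}'s cited predecessor \Cref{mainthmhl}) applies and forces the VHS to be of a very constrained shape. Genericity comes for free from geodesic richness: the totally geodesic $S_{\Gam_i}$ equidistribute (this is exactly the commensurator/ Ratner-type input already invoked in \cite{BFMS, BFMS2, bu}), so no proper subvariety of $S_\Gam$ contains infinitely many of them. The typicality is the delicate point: a priori the $S_{\Gam_i}$ could all be atypical components of the Hodge locus, in which case Zilber--Pink-type statements say nothing directly. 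Here I would use the dimension count built into \Cref{richdef}: the drop $\dim \bfH_i < \dim \bfH$ together with the fact that $S_{\Gam_i}$ is totally geodesic of the ``expected'' codimension in $S_\Gam$ (totally geodesic subvarieties of ball quotients come in the right codimensions, being themselves sub-ball-quotients) should match the expected dimension of the Hodge locus for $\bfH_i$, making the components typical; if not, one passes to the sub-VHS cut out by the Zariski closure of $\bigcup S_{\Gam_i}$ and reruns the argument. Once typicality is secured, \Cref{mainthmhl} reconstructs $\VV$ (or the relevant $\C$-factor) and identifies $\psi$ with a map coming from an algebraic, in fact homogeneous, source; concretely the period map is then forced to be (a twist of) the map induced by an inclusion $G \hookrightarrow \bfH$ of the relevant real groups, which is precisely the statement that $\rho$ extends.

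The main obstacle, as indicated, is controlling typicality versus atypicality of the Hodge loci $\psi^{-1}(Y_i)$ and, relatedly, ruling out the degenerate possibility that $\psi$ is constant or that $\bfH$ is too small for the argument to have content — but the hypothesis that $\rho$ has unbounded, Zariski dense image (part of \Cref{richdef}) rules out the trivial cases, and a Hodge-theoretic dimension bookkeeping (combining the codimension of totally geodesic sub-ball-quotients with the level of the VHS, along the lines of the ``f-pos''/``typ'' formalism the paper sets up) is what makes typicality hold after possibly shrinking the VHS. A secondary technical point is descending from the $\C$-factor to a $\Q$-structure so that Mumford--Tate theory is available: this is handled by the standard device of replacing the $\C$-factor by the $\Q$-sub-VHS it generates, whose generic Mumford--Tate group still sees the same algebraic monodromy. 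With these in place the extension of $\rho$ follows from the rigidity of period maps of the reconstructed homogeneous VHS, exactly as \Cref{mainthm12}(2) will package it.
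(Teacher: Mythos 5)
Your proposal correctly identifies \Cref{mainthmhl} as the engine and geodesic richness as the source of a Zariski-dense family of totally geodesic subvarieties in the Hodge locus, and you correctly note the structural parallel with \Cref{mainthm12}(2). But there is a genuine gap: \Cref{mainthmhl} is a statement about \emph{two} $\Z$VHSs $\mathbb{V}_1,\mathbb{V}_2$ with equal typical Hodge loci, concluding that they are isogenous. It does not ``reconstruct'' a single VHS from its Hodge locus in a vacuum, and you never introduce a second VHS to compare against. The missing ingredient is the \emph{tautological} $\Z$VHS $\VV_1$ on $S_\Gamma$ arising from a faithful rational representation of the $\Q$-algebraic group $\G$ underlying the arithmetic lattice $\Gamma$. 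For $\VV_1$ the positive-dimensional Hodge locus is exactly the union of complex totally geodesic subvarieties and every component is automatically \emph{typical} (every complex totally geodesic subvariety of a ball quotient is a sub-Shimura variety, hence typical for $\VV_1$); complex richness then says this set coincides, on a Zariski-dense subfamily, with the positive-dimensional Hodge locus of the given $\Z$VHS $\VV$. This is what makes the hypothesis $\HL(S_\Gamma,\VV_1^\otimes)_{\pos,\typ}=\HL(S_\Gamma,\VV^\otimes)_{\pos,\typ}\neq\emptyset$ available, and only then does \Cref{mainthmhl} apply. Without $\VV_1$ in the picture, the theorem has nothing to compare $\VV$ to.

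Two related issues. First, your treatment of typicality is both handwavy and misaimed: the ``pass to the sub-VHS cut out by the Zariski closure of $\bigcup S_{\Gamma_i}$'' fallback is vacuous, since the $S_{\Gamma_i}$ equidistribute and their Zariski closure is already $S_\Gamma$; the paper instead sidesteps this by observing that typicality for the tautological $\VV_1$ is free, and the comparison machinery (via \Cref{typicallocus} inside \Cref{hodgethm}) handles the rest. Second, you skip the integrality step: the statement gives a $\Q$VHS, and one needs $\VV$ to be a $\Z$VHS in order to invoke the Zilber--Pink results; in the paper this is supplied by \Cref{cor:Integral}. Finally, a small but telling conceptual slip: you describe the conclusion as identifying $\psi$ with ``the map induced by an inclusion $G\hookrightarrow\bfH$,'' suggesting $\bfH$ is larger than $G$. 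In fact the isogeny conclusion of \Cref{mainthmhl} forces the Hodge datum of $\VV$ to be isogenous to that of $\VV_1$, i.e.\ the period domain is $\mathbb{B}^n$ and $\psi$ is isogenous to the identity, so $\bfH$ at the relevant real place is isogenous to $\PU(1,n)$ itself. That is precisely why $\rho$ extends.
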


The proof of \Cref{richvhs0} builds on the main results of \cite{2021arXiv210708838B}, namely the \emph{geometric part of the Zilber--Pink conjecture}. In particular, the proof crucially uses the following, result proved in \Cref{sectionhodge}, which may be of independent interest.

\begin{theor}\label{mainthmhl}
Let $S$ be a smooth quasi-projective complex variety supporting $ \Z$VHSs $\mathbb{V}_1, \mathbb{V}_2$ whose algebraic monodromy groups are $\QQ$-simple and nontrivial. If
\[
\HL(S,\mathbb{V}_1^\otimes)_{\pos, \typ}=\HL(S,\mathbb{V}_2^\otimes)_{\pos, \typ}\neq \emptyset,
\]
then $\mathbb{V}_1, \mathbb{V}_2$ are isogenous.
\end{theor}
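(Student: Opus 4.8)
The plan is to recover each $\mathbb{V}_i$, up to isogeny, from the combinatorial/geometric data of its typical positive-dimensional Hodge locus, and then compare. First I would recall the structure theory of the Hodge locus from \cite{2021arXiv210708838B}: for a $\Z$VHS $\mathbb{V}$ with $\Q$-simple algebraic monodromy $\mathbf{H}_{\mathbb{V}}$, each component of $\HL(S,\mathbb{V}^\otimes)_{\pos,\typ}$ carries its own generic Mumford--Tate group, and the ambient period domain $D$ is (the $\mathbf{H}_{\mathbb{V}}(\R)$-orbit giving) a Mumford--Tate domain; the typical components correspond to Mumford--Tate subdomains meeting the image of the period map in the expected dimension. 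The key input is that the period map $\varphi_i : \widetilde{S} \to D_i$ has Zariski-dense image in (a quotient of) the Mumford--Tate domain, and that the algebraic monodromy $\mathbf{H}_i$ is a normal subgroup of the derived group of the generic Mumford--Tate group. So the first step is: \emph{the collection of typical positive-dimensional special subvarieties, as a subset of $S$, together with its incidence structure, determines the pair $(\mathbf{H}_i, D_i)$ up to the relevant equivalence} — this is exactly the content that the hypothesis $\HL(S,\mathbb{V}_1^\otimes)_{\pos,\typ}=\HL(S,\mathbb{V}_2^\otimes)_{\pos,\typ}\neq\emptyset$ is designed to exploit.

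Next I would argue that the two period maps factor through the \emph{same} tower of special subvarieties, hence through isogenous groups. Concretely: pick a typical component $Z$ in the common Hodge locus. Over $Z$, the monodromy drops compatibly for both $\mathbb{V}_1$ and $\mathbb{V}_2$; running this over a Zariski-dense (generic) set of such $Z$'s, one sees that the algebraic monodromy groups $\mathbf{H}_1$ and $\mathbf{H}_2$ have the same lattice of ``monodromy-reduced'' subgroups cut out by the $Z$'s. By $\Q$-simplicity of $\mathbf{H}_1,\mathbf{H}_2$ and the fact that a semisimple group is determined by its poset of reductive subgroups of a suitable type (the subgroups arising as algebraic monodromy of special subvarieties), I would deduce an isomorphism $\mathbf{H}_1 \cong \mathbf{H}_2$ of $\Q$-groups, and moreover that this isomorphism is compatible with the two period maps on a dense set of points. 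The standard way to promote this to an isogeny of VHS is then a monodromy/rigidity argument: the identification of algebraic monodromy groups, together with the fact that both local systems are semisimple with the same restriction to a dense family of totally geodesic (special) subvarieties, forces the two monodromy representations $\pi_1(S) \to \mathbf{H}_i(\Q)$ to agree up to an isomorphism $\mathbf{H}_1\cong\mathbf{H}_2$ — because a representation of $\pi_1(S)$ is determined by its restrictions to this Zariski-dense collection of special loci once the target group is pinned down. Finally, matching monodromy plus matching Hodge filtrations on a dense set (both being governed by the same period data) yields that $\mathbb{V}_1$ and $\mathbb{V}_2$ become isomorphic after a finite base change, i.e., are isogenous.

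The main obstacle I anticipate is the step asserting that $\HL(\cdot)_{\pos,\typ}$, purely as a subset of $S$ with its incidence relations, pins down the algebraic monodromy group and the period map rather than merely some coarser invariant. A priori two non-isogenous VHS could conceivably have ``accidentally'' equal typical Hodge loci; ruling this out is where the results of \cite{2021arXiv210708838B} (the geometric Zilber--Pink statement) must be used in an essential way — one needs that typical special subvarieties are \emph{abundant and rigidly structured}, so that the generic Mumford--Tate group can be read off from how they sit inside $S$ and intersect one another. I would handle this by showing that through a generic point of $S$ the positive-dimensional typical special subvarieties sweep out, in the tangent space, a well-defined distribution determined by $\mathbf{H}_{\mathbb{V}}$ (the ``Hodge-generic'' tangent directions versus those tangent to Mumford--Tate subdomains), so that equality of the loci forces equality of these distributions, hence — via the adjoint action and $\Q$-simplicity — an isogeny of algebraic monodromy groups. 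The remaining implications (from isogenous monodromy to isogenous VHS) are then comparatively soft, using semisimplicity of the local systems and the fact that a polarizable $\Z$VHS is rigid once its monodromy and a dense set of Hodge data are fixed.
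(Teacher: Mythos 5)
There is a genuine gap. You correctly diagnose that the main difficulty is converting the equality $\HL(S,\mathbb{V}_1^\otimes)_{\pos,\typ}=\HL(S,\mathbb{V}_2^\otimes)_{\pos,\typ}$ into an actionable algebraic statement, and you correctly sense that geometric Zilber--Pink must be used ``in an essential way.'' But you never supply a mechanism, and the one you gesture at (reconstructing $\mathbf{H}_i$ from the ``poset of reductive subgroups'' cut out by special subvarieties, or from a ``distribution in tangent space'' swept out by them) is not developed to the point where it could be checked, and I do not see how it could be pushed through: typical components do not obviously carry enough structure to pin down the $\Q$-form of the monodromy group, and ``a semisimple group is determined by its poset of reductive subgroups of a suitable type'' is at best unproven and at worst false in the generality you would need. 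Your closing rigidity step (``a representation of $\pi_1(S)$ is determined by its restrictions to this Zariski-dense collection of special loci'') is also unsupported and is doing a lot of unjustified work. The final passage from ``same monodromy and matching Hodge filtrations on a dense set'' to isogeny is not aligned with the paper's Tannakian/modular-correspondence characterization of isogeny (\Cref{def:isog}, \Cref{charisog}).

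The idea that actually makes the theorem go, and which is absent from your proposal, is the \emph{product trick}. Consider the direct sum $\mathbb{V}_1\oplus\mathbb{V}_2$ with period map $f_1\times f_2 : S \to \Gamma_1\backslash D_1 \times \Gamma_2\backslash D_2$. A subvariety $W$ that is a \emph{typical} positive-dimensional intersection for both $\mathbb{V}_1$ and $\mathbb{V}_2$ separately becomes an \emph{atypical} intersection for the product: writing the two typicality equalities $d_i - w_i = (d_i - z_i) + (d_i - s_i)$ and summing, one gets the expected product codimension, but $\dim(f_1\times f_2)(W) \le w_1 + w_2$ with the inequality strict unless the two projections are essentially independent, so a dense set of common typical loci yields a dense set of atypical loci for the sum. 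Applying the geometric Zilber--Pink theorem (\Cref{geometricZP}) to $\mathbb{V}_1\oplus\mathbb{V}_2$, one concludes (using $\Q$-simplicity, which rules out alternative~(b)) that $f_1\times f_2(S)$ is not Hodge generic in the product; and the only proper special subvarieties of $\Gamma_1\backslash D_1\times\Gamma_2\backslash D_2$ dominating both factors are modular correspondences, forcing $D_1\simeq D_2$ and $\G_1^{\ad}\cong\G_2^{\ad}$. One then finishes with \Cref{charisog}. Without passing to the product VHS, the hypothesis is not in a form to which Zilber--Pink applies, and the reconstruction route you sketch would need a substantial new argument.
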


Given a smooth quasi-projective complex variety $S$ and $\VV\to S$ a variation of Hodge structures, we write $\HL(S,\mathbb{V}^\otimes)_{\pos, \typ}$ for the so called \emph{typical Hodge locus of positive period dimension}, as introduced in \cite{2021arXiv210708838B}. See \Cref{def:isog} for the precise definition of an isogeny in this context.

\subsection{Outline of the paper}
In \Cref{sectioncomp} we prove the statement about compatibility. The rest of the paper is devoted to the complex hyperbolic case. In \Cref{sectionwithratner} we give the argument based on various forms of Ratner's equidistribution theorem. In \Cref{sectionimmersion} we give a simple self-contained proof of the results about Siu's immersion problem. The rest of the paper uses more ideas and techniques from Hodge theory. In particular \Cref{hodgeprel} is devoted to all the required preliminaries from Hodge theory and the Zilber--Pink paradigm. In \Cref{sectionhodge} we prove the most general statement on isogenies of VHSs, namely \Cref{mainthmhl}. Finally we prove the main results about the extension of rich representations underlying a VHS in \Cref{mainthmonvhs}.
\subsection*{Acknowledgements}
Baldi and Ullmo were partially supported by the NSF grant DMS-1928930, while they were in residence at the MSRI in Berkeley, during the Spring 2023 semester. Miller was partially supported by Grant Number DMS-2005438/2300370 and Stover was partially supported by Grant Number DMS-2203555 from the National Science Foundation. G.B. also thanks the IHES for the excellent working conditions.

\section{Dynamics background and compatibility}\label{sectioncomp}

This section contains some background on the dynamical side, along with the proof of the first part of \Cref{mainthm12}. Let $G$ be either $\PO(1,n)$ or $\PU(1,m)$. We first state some basic facts and establish some notation that will be used throughout this paper. What is described immediately below recaps facts that are given a detailed treatment in \cite{BFMS, BFMS2}.

Fixing a maximal compact subgroup $K$ in $G$, the associated locally symmetric space is $X = G/K$. Fix a representative $Y \subset X$ for each type (i.e., $G$-orbit) of totally geodesic subspace of $X$ with dimension at least two and call such a representative a \emph{standard} subspace. If $W$ is the subgroup of the stabilizer of $Y$ in $G$ generated by unipotent elements, then the full stabilizer of $Y$ is the normalizer $N_G(W)$, which is a compact extension of $W$.

The Haar measure on $G$ will be denoted $\mu_G$, and (after appropriate scaling) given a lattice $\Gam$ in $G$ this induces a probability measure $\mu_{\Gam \bs G}$ on the quotient $\Gam \bs G$. Suppose that the translate $g Y$ of the standard subspace $Y$ has the property that $g L g^{-1} \cap \Gam$ defines a lattice $\Delta$ in $T=gLg^{-1}$ for some $W \subseteq L \subseteq N_G(W)$. One obtains an immersion $S_\Delta \looparrowright S_\Gam$ induced by the embedding
\[
\Gam \bs \Gam g L \hookrightarrow \Gam \bs G
\]
and an associated push-forward of (the $g$-translate of) $\mu_{\Delta \bs T}$ to a probability measure on $\Gam \bs G$. Thus one obtains a homogeneous $W$-invariant measure on $\Gam \bs G$ with proper support. Upon taking an ergodic component of this measure, one can moreover choose $L$ satisfying the above properties for which the corresponding measure is $W$-ergodic. If $\{S_{\Gam_i}\}$ is an infinite sequence of distinct maximal totally geodesic subspaces of $S_\Gam$, then one can extract a subsequence so that they are all associated with $G$-translates of the same $Y$, so $\Gam_i$ is a lattice in $T_i=g_iL_ig_i^{-1}$ with $W\subseteq L_i \subseteq N_G(W)$ for all $i$. The maximality assumption ensures that $\mu_{\Gam \bs G}$ is a weak-$\ast$ limit\footnote{All limits of measures in this paper will be in the weak-$\ast$ sense.} of (the $g_i$ translates of) the measures $\mu_{\Gam_i \bs T_i}$.

\begin{theor}\label{thmrichsecond}
Suppose that $\Gam\subset G$ is a geodesically rich lattice. Let $k$ be a local field of characteristic zero and $\bfH$ be a connected, adjoint $k$-algebraic group so that the pair $(k, \bfH)$ is compatible with $G$. If $\rho: \Gamma \to \bfH(k)$ is a rich representation, then $\rho$ extends.
\end{theor}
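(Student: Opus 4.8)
The plan is to adapt the dynamical proof of superrigidity of \cite{BFMS, BFMS2} — in the abstract formulation of \cite{zbMATH07623922} — to the present situation, with the compatibility of $(k,\bfH)$ with $G$ playing the role that higher rank plays in Margulis's theorem. First I would reduce to a convenient subsequence. Since $\rho$ is rich, after passing to a subsequence I may assume that the maximal totally geodesic submanifolds $S_{\Gamma_i}$ witnessing richness are all associated to $G$-translates $g_iY$ of a single standard subspace $Y$ of dimension at least two, so that $\Gamma_i$ is a lattice in $T_i=g_iL_ig_i^{-1}$ with $W\le L_i\le N_G(W)$ and $\bfH_i:=\overline{\rho(\Gamma_i)}\subsetneq\bfH$ for every $i$ (arithmeticity of $\Gamma$ is what guarantees that such an infinite family, with the $\Gamma_i$ honest lattices in the $T_i$, exists). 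As recalled just before the statement, maximality of the $S_{\Gamma_i}$ forces the homogeneous $W$-invariant probability measures $\nu_i$ carried by $S_{\Gamma_i}$ on $\Gamma\bs G$ to converge weak-$\ast$ to the Haar probability measure $\mu_{\Gamma\bs G}$; this equidistribution is the main dynamical input.

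The second, central step is to convert this into a $\Gamma$-equivariant \emph{algebraic} datum on the Furstenberg boundary $B=G/P$. Let $\mathcal{V}$ denote the $\bfH$-variety parametrizing $k$-algebraic subgroups of $\bfH$ of dimension $\dim\bfH_i$, realized concretely (via a Chevalley embedding) so that $\bfH$ acts algebraically with algebraic stabilizers. The point $[\bfH_i]\in\mathcal{V}(k)$ is fixed by $\rho(\Gamma_i)$, and since $\Gamma_i$ is a lattice in $T_i\supseteq W$ this produces, lying over $\nu_i$, a $W$-invariant probability measure $\widetilde\nu_i$ on $\Gamma\bs(G\times\mathcal{V}(k))$ for the twisted action $\gamma\cdot(g,v)=(\gamma g,\rho(\gamma)v)$. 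Passing to a further subsequence, $\widetilde\nu_i\to\widetilde\mu$, and — importing the measure-rigidity and recurrence arguments of \cite{BFMS, BFMS2} — one upgrades the a priori $W$-invariance of $\widetilde\mu$ to full $G$-invariance; by $G$-ergodicity of $\mu_{\Gamma\bs G}$ the measure $\widetilde\mu$ is then exactly a $\Gamma$-equivariant measurable map $G\to\operatorname{Prob}(\mathcal{V}(k))$. Because $\rho(\Gamma)$ is Zariski dense and unbounded while every $\bfH_i$ is \emph{proper}, an analysis of this map (using amenability of the $\Gamma$-action on $B$, properness of stabilizers in $\mathcal{V}$, and semicontinuity of dimension to prevent the limit from jumping up to $\bfH$) shows it is supported on a single $\bfH$-orbit, hence descends to a $\Gamma$-equivariant measurable map $\phi\colon B\to\big(\bfH/N_{\bfH}(\mathbf{L})\big)(k)$ for some proper $k$-algebraic subgroup $\{1\}\ne\mathbf{L}\le\bfH$.

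With $\phi$ in hand the rest is formal. Evaluating $\Gamma$-equivariance of $\phi$ at a boundary point with stabilizer $P$ yields a continuous homomorphism $\tau\colon P\to\big(N_{\bfH}(\mathbf{L})/\mathbf{L}\big)(k)$ compatible with $\rho$ on $\Gamma\cap P$. Applying \Cref{def:comp} with $\bfH'=\mathbf{L}$ gives $U\le\Ker(\tau)$. The vanishing of $\tau$ on $U$ is precisely the input to the Margulis-type rationality argument carried out in \cite{BFMS, BFMS2}: it forces $\phi$ to be a rational (algebraic) map, hence the orbit map of a continuous homomorphism $\widetilde\rho\colon G\to\bfH(k)$, and a Zariski-density argument (together with $\bfH$ being adjoint) identifies $\widetilde\rho|_\Gamma$ with $\rho$, so that $\rho$ extends. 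When $k$ is nonarchimedean the step $U\le\Ker(\tau)$ holds automatically, since $U$ is connected and $\bfH(k)$ is totally disconnected — this is \Cref{remnonarchimedean} — so in that case the argument does not use compatibility beyond this triviality.

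The main obstacle is the second paragraph: promoting the per-submanifold data, which is only $W$-invariant and only $\Gamma_i$-equivariant, to a genuinely $G$-equivariant map on the full boundary with a \emph{proper and nontrivial} gate group $\mathbf{L}$. Concretely one must (i) control the subgroups $\bfH_i$ uniformly — e.g.\ bound their degrees — so that the weak-$\ast$ convergence $\nu_i\to\mu_{\Gamma\bs G}$ can be lifted to convergence of the $\widetilde\nu_i$ (without mass escaping in the $\mathcal{V}$-direction); (ii) run the recurrence/maximality argument to gain $G$-invariance of $\widetilde\mu$ in the limit; and (iii) rule out both $\mathbf{L}=\bfH$ (which would make $\phi$ constant and uninformative) and $\mathbf{L}=\{1\}$ (which would contradict unboundedness of $\rho$). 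These three points carry the technical weight of \cite{BFMS, BFMS2}, and transporting them to the abstract compatibility setting is the substance of the proof.
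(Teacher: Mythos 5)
Your overall strategy is the right one and is the one the paper follows: extract a subsequence of maximal totally geodesic submanifolds of a single type, use maximality to get equidistribution of the homogeneous $W$-invariant measures $\mu_i$ to Haar measure, and lift these measures over a parameter space recording the proper subgroups $\bfH_i=\overline{\rho(\Gamma_i)}$. But you are re-deriving machinery the paper treats as a black box, and in doing so you front-load a difficulty that the actual proof never encounters. The paper reduces everything to the criterion of \cite[Thm.\ 1.6 and Rmk.\ 1.7]{BFMS}: to extend $\rho$ it suffices to exhibit (1) a noncompact almost-simple proper $W\subset G$, (2) a faithful irreducible $k$-rational representation $\bfH(k)\to\SL(V)$, and (3) a $W$-invariant measure on the suspension $\Gam\bs(G\times\mathbb{P}(V))$ projecting to $\mu_{\Gam\bs G}$. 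Your entire second and third paragraphs --- the upgrade from $W$- to $G$-invariance, the boundary map $\phi$, the gate group $\mathbf{L}$, the application of \Cref{def:comp} to kill $\tau$ on $U$, and the Margulis rationality argument --- all live inside that cited criterion and do not need to be reproved here; compatibility enters only as a hypothesis of the black box.

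The concrete gap is in your choice of fiber. Parametrizing the $\bfH_i$ by a variety $\mathcal{V}(k)$ of algebraic subgroups forces you to bound degrees and to worry about mass escaping in the $\mathcal{V}$-direction, and you correctly flag this as carrying ``the technical weight'' --- but you do not resolve it, and resolving it in that formulation would be genuinely painful. The paper's device dissolves the problem: after passing to a subsequence with $d=\dim\bfH_i$ constant, the Lie algebra $\mathfrak{h}_i$ determines a line $\ell_i\subset\bigwedge^d\mathfrak{h}$; one chooses an irreducible summand $\theta:\bfH(k)\to\mathbf{GL}(V)$ of $\wedge^d(\Ad)$ to which infinitely many $\ell_i$ project nontrivially, so that $x_i=[\ell_i]\in\mathbb{P}(V)$ is fixed by $\overline{\theta}(\rho(\Gamma_i))$. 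Since $\mathbb{P}(V)$ is compact, the pushed-forward measures $\nu_i$ on the suspension automatically have no escape of mass in the fiber direction, any weak-$\ast$ limit is a probability measure, and its projection to $\Gam\bs G$ is Haar because projection commutes with weak-$\ast$ limits. This single construction supplies both item (2) and item (3) above, and it is exactly the step your proposal is missing; your items (ii) and (iii) are then subsumed by the citation rather than being obstacles you must clear.
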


\begin{proof}
By \Cref{richdef}, $\rho$ is a homomorphism with unbounded, Zariski dense image. Following Theorem 1.6 and Remark 1.7 of \cite{BFMS}, to prove that there exists a continuous extension ${\widetilde{\rho} : G\to \bfH(k)}$ of $\rho$, it suffices to find
\begin{enumerate}
\item a noncompact almost-simple, proper subgroup $W \subset G$,
\item a $k$-rational faithful, irreducible representation $\bfH \to \SL (V)$ on a finite dimensional $k$-vector space $V$, and
\item a $W$-invariant measure $\nu$ on $\Gam \bs (G \times \mathbb{P}(V))$ that projects to the Haar measure $\mu_{\Gamma \bs G}$ on $\Gamma \bs G$.
\end{enumerate}
We now explain how richness implies the existence of (1)-(3), which follows the argument given in \cite[\S 3]{BFMS} in a similar context.

Recall that by hypothesis, $S_{\Gamma}$ contains infinitely many distinct maximal totally geodesic subspaces $S_{\Gam_i}$. Passing to a subsequence, we assume they are of the same type (in the sense described in this section). This defines a sequence of homogeneous, $W$-ergodic measures $\{\mu_i\}$ with $\mathrm{supp}(\mu_i)\neq \Gam \bs G$ for which $\mu_i \to \mu_{\Gam \bs G}$, where $W \subset G$ is a fixed proper subgroup generated by unipotent elements. This provides the $W$ in item (1) above.

The condition that $\mu_i$ is $W$-ergodic and homogeneous is precisely the condition that there exists $L_i \subset G$ with $W\le L_i\le N_G(W)$ and $g_i\in G$ for which $\Gam \bs \Gam g_i L_i \subset \Gam \bs G$ is closed, and the measure obtained by push-forward is ergodic for the $W$-action. If $T_i=g_i L_i g_i^{-1}$, then $\Gam \bs \Gam g_i L_i$ being closed in $\Gam \bs G$ is equivalent to $\Gamma_i=T_i\cap\Gamma$ being a lattice in $T_i$. The condition that $\mu_i$ is $W$-ergodic is equivalent to the condition that $T_i$ is the identity component of the Zariski closure of $g_i N_G(W) g_i^{-1} \cap\Gamma$.

Returning to the proof at hand, recall from \Cref{richdef} that $\bfH_i=\overline{\rho(\Gamma_i)}$ is a $k$-defined noncompact proper subgroup of $\bfH$ with $\dim(\bfH_i) < \dim(\bfH)$ for infinitely many $i$. Passing to a subsequence, we can assume that $\dim(\bfH_i)$ is constant, and we denote this constant by $d$. Consider the $d^{th}$ exterior power of the adjoint representation
\[
\wedge^d(\Ad):\bfH(k)\longrightarrow\mathbf{GL}\left(\bigwedge\nolimits^d\mathfrak{h}\right),
\]
where $\mathfrak{h}$ is the Lie algebra of $\bfH(k)$. The Lie algebra $\frak{h}_i=\Lie(\bfH_i(k))$ determines a line $\ell_i \subset \bigwedge\nolimits^d\frak{h}$. The representation $\wedge^d(\Ad)$ need not be irreducible, however there exists a nontrivial summand $\theta:\bfH \to \mathbf{GL}(V)$, where $V$ is a $k$-defined vector space, for which infinitely many $\ell_i$ project nontrivially to $V$.
This representation satisfies item (2) above.

Finally for item (3), we again pass to a subsequence to assume that all $\ell_i$ project nontrivially to $V$ and therefore $\theta(\rho(\Gamma_i))$ is contained in the stabilizer of $\ell_i$ for infinitely many $i$. Let $x_i$ be the point corresponding to the image of $\ell_i$ in the projectivization $\mathbb{P}(V)$ and we denote the composition of $\theta\circ\rho$ with the projectivization by $\overline{\theta}$.

The \emph{suspension space}, $\Gam \bs (G\times\mathbb{P}(V))$ is formed via the identification
\[
(g,[v])\sim (\gamma g, \overline{\theta}(\gamma)[v]),
\]
which is naturally a fiber bundle over $\Gam \bs G$. Since the right $G$-action on the first factor of $G\times\mathbb{P}(V)$ commutes with this $\Gamma$-action, it descends to a right $G$-action on the suspension space. Recalling the notation from the beginning of the proof, the fact that $\Gam \bs \Gam T_i$ is closed in $\Gam \bs G$ is equivalent to the natural map $\Gamma_i \bs T_i\to \Gam \bs \Gam T_i$ being a homeomorphism. As $\overline{\theta}(\Gamma_i)$ fixes $\{x_i\}$, it is straightforward that $\Gam_i \bs T_i$ is isomorphic to $\Gam_i \bs (T_i\times \{x_i\})$ under the same action as above. Recalling that $g_i L_i=T_i g_i$, commutativity with the right $G$-action yields the following commutative diagram:
\[
\xymatrix{\Gam_i \bs \left(L_i\times\{x_i\}\right)g_i\ar[r]^-{\cong}\ar[d]^-{\cong}&\Gam_i \bs \left(T_i\times\{x_i\}\right)\ar@{^{(}->}[r]\ar[d]^-{\cong}&\Gam \bs \left(G\times\mathbb{P}(V)\right)\ar[d]\\
\Gam \bs \Gam g_i L_i= \Gam \bs \Gam T_i g_i \ar[r]^-{\cong}&\Gam \bs \Gam T_i\cong \Gam_i \bs T_i\ar@{^{(}->}[r]&\Gam \bs G
}
\]
If $\sigma_i$ denotes the induced map $\sigma_i:\Gam \bs \Gam g_i L_i\hookrightarrow \Gam \bs (G\times\mathbb{P}(V))$, then $\nu_i=(\sigma_i)_*\mu_i$ provides a $W$-invariant, ergodic measure on $\Gam \bs (G\times\mathbb{P}(V))$. Again passing to a subsequence, let $\nu_\infty$ denote any weak-$\ast$ limit of the $\nu_i$. Then any ergodic component of $\nu_\infty$ projects to $\mu_{\Gam \bs G}$ and hence satisfies (3) since projection and weak-$\ast$ limit commute. This completes the proof.
\end{proof}

\section{Dynamical proof for a variant of \Cref{mainthmpre}}\label{sectionwithratner}

The purpose of this section is to give a dynamical proof of the following variant of \Cref{mainthmpre}. We continue with the notation established in \Cref{sectioncomp}.

\begin{theor}
Let $S_n =\Gamma _n \backslash \mathbb{B}^n$ be a smooth complex ball quotient that is arithmetic of simplest kind, and suppose that there is immersive holomorphic map
\begin{displaymath}
f: S_n \longrightarrow S_m \coloneqq \Gamma _m \backslash \mathbb{B}^m
\end{displaymath}
to another arithmetic ball quotient of simplest kind. Suppose that $Y_\ell$ is a sequence of distinct irreducible totally geodesic divisors on $S_m$ for which the top dimensional irreducible components of $f^{-1}(Y_\ell)$ are totally geodesic subvarieties of $S_n$. Then $f$ is a totally geodesic immersion.
\end{theor}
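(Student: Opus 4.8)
\emph{Proof strategy.} We will deduce the theorem from the suspension-space and measure-rigidity machinery behind \Cref{thmrichsecond} and \cite{BFMS, BFMS2}. The top-dimensional irreducible components $Z_\ell$ of $f^{-1}(Y_\ell)$ are, by hypothesis, proper positive-dimensional totally geodesic subvarieties of $S_n$ with $f(Z_\ell) \subset Y_\ell \subsetneq S_m$; discarding repetitions we may assume they are pairwise distinct, the remaining case — in which $f$ carries a fixed totally geodesic divisor of $S_n$ into a totally geodesic subvariety of $S_m$ of codimension $\ge 2$ — reducing to a lower-dimensional instance of the theorem. Since a proper closed subvariety of $S_n$ contains only finitely many irreducible divisors, $\{Z_\ell\}$ is then a generic sequence, so we are in the situation of condition~(2) of \Cref{mainthmpre}, and the theorem amounts to proving the implication (2)$\Rightarrow$(1) by homogeneous dynamics. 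For this, reduce as usual: replacing $\mathbb{B}^m$ by the smallest totally geodesic sub-ball containing $\widetilde f(\mathbb{B}^n)$ we may assume $\rho := f_\ast : \Gamma_n \to \mathrm{PU}(1,m)$ has Zariski dense, hence unbounded, image; if this forces $n=m$ then $f$ is a local isometry, hence a covering and totally geodesic, so assume $1 < n < m$. It then suffices to show that $\rho$ extends to a continuous homomorphism $\widetilde\rho : \mathrm{PU}(1,n) \to \mathrm{PU}(1,m)$: for $n>1$ any nontrivial such homomorphism is, up to conjugacy, the inclusion attached to a totally geodesic sub-ball $\mathbb{B}^n \subset \mathbb{B}^m$, and a $\widetilde\rho$-equivariant holomorphic immersion then has totally geodesic image.

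\emph{Equidistribution of the divisor orbits.} Passing to a subsequence of the $Z_\ell$ of a fixed type, each $Z_\ell$ is the image of a closed orbit $\mathcal O_\ell \subset \Gamma_n \backslash \mathrm{PU}(1,n)$ of the subgroup $W \cong \mathrm{PU}(1,n-1)$ generated by unipotents in the stabilizer of a standard sub-ball, carrying a $W$-invariant ergodic probability measure $\mu_\ell$. By Mozes--Shah and the no-escape-of-mass results of Dani--Margulis, after a further subsequence $\mu_\ell$ converges to a homogeneous probability measure; as $W$ is maximal in $\mathrm{PU}(1,n)$ up to its normalizer, this limit is supported either on the frame bundle of a single totally geodesic divisor — impossible, since distinct closed orbits of this type cannot accumulate and the $Z_\ell$ are distinct — or on all of $\Gamma_n \backslash \mathrm{PU}(1,n)$. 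Thus $\mu_\ell \to \mu_{\Gamma_n \backslash \mathrm{PU}(1,n)}$; here codimension one plays the role of the maximality hypothesis in \Cref{thmrichsecond}.

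\emph{Encoding $f$ and passing to the limit.} Let $\mathrm{PU}(1,m)$ act on $\mathbb{P}^m = \mathbb{P}(\mathbb{C}^{m+1})$, in which the lines negative for the defining Hermitian form form an open orbit with \emph{reductive} stabilizer $\mathrm{P}(\mathrm{U}(1,m-1) \times \mathrm{U}(1))$; the totally geodesic divisor $Y_\ell$ is cut out in $\mathbb{B}^m$ by the hyperplane orthogonal to a negative line $q_\ell \in \mathbb{P}^m$, and $f(Z_\ell) \subset Y_\ell$ forces $\rho(\Gamma_{Z_\ell})$ to fix $q_\ell$. Hence in the finite-volume suspension $\mathcal E := \Gamma_n \backslash (\mathrm{PU}(1,n) \times \mathbb{P}^m)$ — a bundle over $\Gamma_n \backslash \mathrm{PU}(1,n)$ with compact fibre, carrying the right $\mathrm{PU}(1,n)$-action on the first factor — the $\mu_\ell$ lift to $W$-invariant probability measures supported on sets of the form $\mathcal O_\ell \times \{q_\ell\}$, and a further subsequence converges to a $W$-invariant probability measure $\nu$ on $\mathcal E$ projecting to $\mu_{\Gamma_n \backslash \mathrm{PU}(1,n)}$ and supported over the closure of the negative-line locus. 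This is exactly the package — items (1)--(3) in the proof of \Cref{thmrichsecond} — on which the linearization arguments of \cite{BFMS, BFMS2} operate.

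\emph{The main obstacle.} The crux is to conclude from $\nu$ that $\rho$ extends. One cannot simply quote \Cref{thmrichsecond}, because $(\mathbb{R}, \mathrm{PU}(1,m))$ is \emph{not} compatible with $\mathrm{PU}(1,n)$ in the sense of \Cref{def:comp}: for $n \le m$ the unipotent radical of a minimal parabolic of $\mathrm{PU}(1,m)$ receives nontrivial homomorphisms from that of $\mathrm{PU}(1,n)$, so compatibility fails for unipotent auxiliary subgroups. This is the single place where the geometry of the problem must genuinely be used. The negative-line structure confines the auxiliary subgroups relevant to classifying the ergodic components of $\nu$ to conjugates of the reductive group $\mathrm{P}(\mathrm{U}(1,m-1) \times \mathrm{U}(1))$ and its subgroups, for which the obstruction to compatibility — which arises precisely from homomorphisms into parabolic quotients — does not occur; so the relevant case of the measure-rigidity theorem of \cite{BFMS}, relativized to the flag variety of negative lines, applies and shows $\nu$ is $\mathrm{PU}(1,n)$-invariant. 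One must also rule out escape of mass into the boundary $\partial\mathbb{B}^m \subset \mathbb{P}^m$ in the limit, using that the $q_\ell$ — attached to arithmetic totally geodesic divisors of a quotient of simplest kind — remain, modulo $\Gamma_n$, in a fixed compact subset of the negative-line orbit. A $\mathrm{PU}(1,n)$-invariant $\nu$ projecting to Haar is the graph of an equivariant section, whence $\rho$ extends and $f$ is a totally geodesic immersion. The simplest-kind hypothesis enters to guarantee that the $Z_\ell$ and $Y_\ell$ are arithmetic, so that the orbit equidistribution and the compactness control on the $q_\ell$ are available.
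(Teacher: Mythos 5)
Your strategy is genuinely different from the paper's: the paper does not try to run the suspension-space superrigidity machinery here at all, precisely because of the obstacle you flag. Instead it proves $f^*\omega=\lambda\alpha$ for the K\"ahler forms by a ``double equidistribution'' computation: the $Y_\ell$ equidistribute in $S_m$ (Koziarz--Maubon type results), the totally geodesic divisors $f^{-1}(Y_\ell)$ equidistribute in $S_n$ (Ratner, as in your second paragraph, which is essentially the paper's \Cref{lem:equidistribute}), and comparing the two limits of $\int f^*\beta$ against $\int\alpha^{n-1}$ and $\int f^*\omega^{n-1}$ for all compactly supported positive $(n-1,n-1)$-forms $\beta$ forces the pulled-back metric to be a constant multiple of the locally symmetric one. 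That argument never needs to extend $\rho$ and so sidesteps compatibility entirely.

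The gap in your proposal is exactly at the step you yourself call ``the main obstacle.'' You correctly observe that $(\RR,\PU(1,m))$ is not strongly compatible with $\PU(1,n)$, so \Cref{thmrichsecond} does not apply, and you then assert that the ``negative-line structure confines the auxiliary subgroups relevant to classifying the ergodic components of $\nu$'' to reductive groups, so that a ``relativized'' measure-rigidity theorem applies. No such relativized theorem is stated or proved, and the confinement claim is not justified: the subgroups $\bfH_i'$ entering the linearization argument of \cite{BFMS} are the Zariski closures $\overline{\rho(\Gam_{Z_\ell})}$ and the stabilizers of the limit points of the lines $\ell_i$, not the stabilizers of the $q_\ell$ themselves; the compatibility condition must hold for \emph{every} nontrivial subgroup $\bfH'$ arising this way, and the closure of the negative-line orbit in $\PP^m$ contains the isotropic lines, whose stabilizers are parabolic. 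Nothing in your argument prevents the fiber component of the limit measure from concentrating there, and your compactness claim for the $q_\ell$ ``modulo $\Gam_n$'' is not a well-posed statement ($\Gam_n$ acts on $\PP^m$ through $\rho$ with no nice quotient, and compactness of the fiber $\PP^m$ is not the issue --- the issue is which stabilizers occur). If this step could be carried out it would yield an archimedean superrigidity statement for maps $\Gam_n\to\PU(1,m)$ well beyond what is currently known, which is a strong sign that it cannot be waved through. To repair the proof you should abandon the extension-of-$\rho$ route and instead convert the equidistribution of the $Z_{\ell,i}$ in $S_n$ and of the $Y_\ell$ in $S_m$ into the identity $f^*\omega=\lambda\alpha$ on forms, as the paper does.
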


\begin{proof}
Throughout this proof, let $G=\SU(1,n)$.
Let $\omega$ be the K\"{a}hler form on $S_m$ descending from the $G$-invariant form on $\mathbb{B}^m$ and $\alpha$ be the analogous form on $S_n$. We will prove that $f^* \omega = \lambda \alpha$ for some constant $\lambda$, hence the induced metric on $S_n$ is a multiple of the locally symmetric metric. This implies that $f$ must be a totally geodesic immersion and thus, under the standard normalizations, $f^* \omega = \alpha$.

A simple transversality argument implies that $f^{-1}(Y_\ell)$ has top-dimensional irreducible components with codimension one for all but finitely many $\ell$. Let $\beta$ be a positive $(n-1,n-1)$ form on $S_m$ with compact support. The $Y_\ell$ equidistribute in $S_m$, so by applying \cite[Cor. 1.5]{km}\footnote{Also see the related paper of M\"oller--Toledo's \cite{zbMATH06442355} and the more general work of Tayou--Tholozan \cite{zbMATH07643474}.} to both $\beta$ and $\omega^{n-1}$ we deduce that
\begin{equation}\label{eq4.1}
\frac{\int_{f(S_n)\cap Y_\ell} \beta}{\int_{f(S_n)\cap Y_\ell}\omega^{n-1}} \longrightarrow \frac{\int_{f(S_n)}\beta \wedge \omega}{\int_{f(S_n)}\omega^n}
\end{equation}
as $\ell$ goes to infinity. By definition of the pullback, the left hand side of Equation \eqref{eq4.1} is also equal to 
\begin{displaymath}
\frac{\int_{f^{-1}(Y_\ell)} f^*\beta}{\int_{f^{-1}(Y_\ell)}f^*\omega^{n-1}}=\frac{\int_{f^{-1}(Y_\ell)} \alpha^{n-1}}{\int_{f^{-1}(Y_\ell)}f^*\omega^{n-1}} \frac{\int_{f^{-1}(Y_\ell)}f^*\beta}{\int_{f^{-1}(Y_\ell)}\alpha^{n-1}}.
\end{displaymath}
We want to compute the limits of $\frac{\int_{f^{-1}(Y_\ell)}f^*\beta}{\int_{f^{-1}(Y_\ell)}\alpha^{n-1}}$ and $\frac{\int_{f^{-1}(Y_\ell)}f^*\omega^{n-1}}{\int_{f^{-1}(Y_\ell)}\alpha^{n-1}}$.

Since there are possibly several irreducible components, we write
\begin{displaymath}
f^{-1}(Y_\ell)=Z_{\ell,1}\cup .... \cup Z_{\ell,n_\ell}.
\end{displaymath}
Now we use Ratner's theorem in $S_n$, using the fact that $f^{-1}(Y_\ell)$ consists of totally geodesic divisors. Until this point we only used that we had some typical intersections, now we redo the same computations using what we know about $S_n$ and $f^{-1}(Y_\ell)$.

\begin{lem}\label{lem:equidistribute} Let $\{\mu_{f^{-1}(Y_\ell)}\}$ be the sequence
\begin{displaymath}
\mu_{f^{-1}(Y_\ell)}\coloneqq \frac{1}{n_\ell} \sum_{i =1}^{n_\ell} \mu_{Z_{\ell,i}}
\end{displaymath}
of probability measures on $\Gam_n\bs G$. Then the $\mu_{f^{-1}(Y_\ell)}$ weak-$\ast$ converge to $\mu_{S_n}$.
\end{lem}

\begin{proof}
We need to recall the description of the totally geodesic divisors on $S_n=\Gamma_n\backslash \mathbb{B}^n$. We refer to \cite[Lem. 8.2]{BFMS2} for a more detailed description and for a proof of the following facts. We have an isomorphism $\mathbb{B}^n= G/K$, where $K=\mathrm{S}(\U(1)\times \U(n))$ is a maximal compact subgroup of $G$, and a fixed embedding $W\coloneqq\SU(1,n-1)\subset G$. Let $\pi: \Gamma_n\backslash G\longrightarrow S_n$ be the natural projection associated with the right quotient by $K$. Then any totally geodesic divisor $Z$ on $S_n$ is of the form
\[
Z= \pi( \Gamma_n\backslash \Gamma_n g W)
\]
for some $g\in \SU(1,n)$ such that $gWg^{-1}\cap \Gamma_n$ is a lattice in $gWg^{-1}$ (see \Cref{sectioncomp} and \Cref{rem:TGsubgroup} at the end of the proof).

Using this description we can write $Z_{\ell,i}= \pi (\Gamma_n\backslash \Gamma_n g_{\ell,i} W)$ for some $g_{\ell, i}\in \SU(1,n)$, where $1\le i\le n_\ell$ for each $\ell$. Let $\mu_{\ell,i}$ be the $W$-invariant probability measure on 
$\Gamma_n\backslash G$ with support $\Gamma_n\backslash \Gamma_n g_{\ell,i} W$ and define
\[
\mu_\ell\coloneqq \frac{1}{n_\ell}\sum_{i=1}^{n_\ell} \mu_{\ell,i}.
\]
Note that $\pi_*\mu_\ell=\mu_{f^{-1}(Y_\ell)}$ by \cite[Lem. 8.2(1)]{BFMS}. As each $\mu_{\ell,i}$ is $W$-invariant so is $\mu_\ell$ and consequently any weak-$\ast$ limit $\mu$ of a subsequence of the $\mu_\ell$ is a $W$-invariant measure. Moreover, the similar argument to \cite[Lem. 8.3]{BFMS} using quantitative non-divergence shows that there is no escape of mass and thus $\mu$ is also a probability measure.

Let $\nu$ be an ergodic component of $\mu$. By Ratner's theorem \cite{RatnerMeasure}, $\nu$ is a homogeneous, $W$-invariant, $W$-ergodic measure and therefore is either a constant multiple of the Haar measure on $\Gam_n\bs G$ or a constant multiple of the Haar measure on a proper closed $W$-orbit $V\subsetneq \Gam_n\bs G$. However, note that for any such closed $W$-orbit, $\mu_{\ell,i}(V)=0$ for sufficiently large $\ell$ and all $1\le i\le n_\ell$ and hence the latter cannot occur. Therefore only the former possibility occurs and hence $\nu=\mu_{\Gam_n\bs G}$. Since pushforward and weak-$\ast$ limits commute, we conclude that $\mu_{f^{-1}(Y_\ell)}\to\mu_{S_n}$ as required.
\end{proof}

As a consequence, again by upgrading Ratner to a statement about forms as in \cite[Cor. 1.5]{km}, but this time in $S_n$ rather than in $\mathbb{B}^m$, we get
\begin{displaymath}
\frac{\int_{f^{-1}(Y_\ell)}f^*\beta}{\int_{f^{-1}(Y_\ell)}\alpha^{n-1}} \longrightarrow \frac{\int_{S_n}f^*\beta \wedge \alpha}{\int_{S_n} \alpha^n}.
\end{displaymath}
When we apply this result to the case $\beta=\omega^{n-1}$, we find that
\begin{displaymath}
\frac{\int_{f^{-1}(Y_\ell)}f^*\omega^{n-1}}{\int_{f^{-1}(Y_\ell)}\alpha^{n-1}} \longrightarrow \frac{\int_{S_n}f^*\omega^{n-1} \wedge \alpha}{\int_{S_n} \alpha^n}.
\end{displaymath}
Putting everything together, we obtain
\begin{displaymath}
\frac{\int_{S_n} f^* \beta \wedge \alpha}{\int_{S_n} f^*\omega^{n-1} \wedge \alpha} = \frac{\int_{S_n} f^* \beta \wedge f^*\omega}{\int_{S_n} f^* \omega^n}.
\end{displaymath}
Since the above is true for all positive $(n-1,n-1)$ forms $\beta$ with compact support on $S_m$ and $f^*: \Omega^p(S_m)\to \Omega^p(S_n)$ is locally surjective because $f$ is an immersion, we deduce that $f^* \omega = \lambda \alpha$ for
\begin{displaymath}
\lambda \coloneqq \frac{\int_{S_n} f^*\omega^n}{\int _{S_n} f^* \omega^{n-1}\wedge \alpha} >0.
\end{displaymath}
As explained at the beginning of the proof, this implies that $f$ is indeed a totally geodesic immersion, as required.
\end{proof}

The ``double equidistribution computation'' employed above is reminiscent of an argument appearing in \cite{buiumpoonen1, mybuiumpoonen}, which was again motivated by the Zilber--Pink philosophy.

\begin{rmk}\label{rem:TGsubgroup}
Technically speaking, using \cite[Lem. 8.2]{BFMS2} one only obtains the fact that any totally geodesic divisor $Z$ is of the form $Z=\pi(\Gam_n\bs\Gam_n gL)$ for some $g\in\SU(1,n)$, where $L$ is an intermediate subgroup $W\subseteq L\subseteq N_G(W)$ and $gLg^{-1}\cap\Gam_n$ is a lattice in $gLg^{-1}$. Moreover, there are many instances where it is necessary to take $L$ strictly larger than $W$ in order for $gLg^{-1}\cap\Gam_n$ to be a lattice in $gLg^{-1}$. However, in the case where $\Gam_n$ is a lattice of simplest kind, one can always take $L=W$ which is what we are implicitly using in \Cref{lem:equidistribute}.
\end{rmk}

\section{The proof of \Cref{mainthmpre}}\label{sectionimmersion}

We now prove \Cref{mainthmpre} using some Hodge theoretic features behind the theory of Shimura varieties. This can serve as a warm up case for later arguments that use the full power of the theory of variations of Hodge structures, though it only requires the language used in previous sections, allowing us to defer the introduction of more technical definitions and results.

Suppose that $S_n=\Gamma _n \backslash \mathbb{B}^n$ is an arithmetic ball quotient and that
\begin{displaymath}
f: S_n \longrightarrow S_m \coloneqq \Gamma _m \backslash \mathbb{B}^m
\end{displaymath}
is an immersive holomorphic map to another arithmetic ball quotient. From now on, we always assume that $f(S_n)$ is \emph{monodromy generic}, i.e., is not contained in any strict complex totally geodesic subvariety $S^\prime$ of $S_m$. Let $\Gamma_f$ be the graph of $f$ in $S_n \times S_m$. We now prove the following equivalent formulation of \Cref{mainthmpre}.

\begin{theor}\label{thm11}
Let $\{Z_\ell\}$ (resp.\ $\{Y_\ell\}$) be an infinite sequence of distinct complex totally geodesic subvarieties of $S_n$ (resp.\ $S_m$). If $f(Z_{\ell})\subset Y_{\ell}$ for all $\ell$, then $f$ is a totally geodesic immersion.
\end{theor}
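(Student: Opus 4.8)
The plan is to pass to the graph $\Gamma_f \subset S_n \times S_m$ and run a Hodge-theoretic / Zilber--Pink argument in the ambient product. First I would observe that both $S_n$ and $S_m$, being arithmetic ball quotients, carry canonical $\mathbb{Z}$VHSs $\mathbb{V}_n$, $\mathbb{V}_m$ (for instance the weight-one VHS attached to a suitable auxiliary abelian scheme, or more intrinsically the adjoint VHS of the associated Shimura datum for $\mathrm{SU}(1,n)$, $\mathrm{SU}(1,m)$), each with $\mathbb{Q}$-simple, nontrivial algebraic monodromy group. Pulling back along the graph, the variety $\Gamma_f \cong S_n$ supports the two $\mathbb{Z}$VHSs $\mathbb{V}_1 \coloneqq \mathrm{pr}_n^* \mathbb{V}_n$ and $\mathbb{V}_2 \coloneqq f^* \mathbb{V}_m$. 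The hypothesis that $f(Z_\ell) \subset Y_\ell$ for an infinite sequence of totally geodesic subvarieties is designed precisely so that the $Z_\ell$ (or a cofinal subfamily) are simultaneously special for both $\mathbb{V}_1$ and $\mathbb{V}_2$: a positive-dimensional totally geodesic subvariety of a ball quotient is exactly a component of the Hodge locus of the ambient VHS, and one checks it is a \emph{typical} intersection of positive period dimension, so it contributes to $\HL(\Gamma_f, \mathbb{V}_i^\otimes)_{\pos,\typ}$ for $i=1,2$.

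The second step is to upgrade ``infinitely many common special subvarieties'' to the equality of typical Hodge loci required by \Cref{mainthmhl}. For this I would invoke the structure theory of typical Hodge loci from \cite{2021arXiv210708838B}: the typical Hodge locus of positive period dimension is a countable union, and its ``expected'' components are governed by the monodromy; an infinite generic (Zariski-dense) sequence of positive-dimensional totally geodesic subvarieties of a ball quotient already generates, via equidistribution / the density statements in loc.\ cit., the \emph{entire} typical locus. Thus $\HL(\Gamma_f,\mathbb{V}_1^\otimes)_{\pos,\typ} = \HL(\Gamma_f,\mathbb{V}_2^\otimes)_{\pos,\typ} \neq \emptyset$. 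Applying \Cref{mainthmhl} gives that $\mathbb{V}_1$ and $\mathbb{V}_2$ are isogenous.

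The third step is to translate the isogeny $\mathbb{V}_1 \sim \mathbb{V}_2$ into the geometric conclusion that $f$ is totally geodesic. An isogeny of the adjoint VHSs forces the two period maps $\Gamma_f \to \Gamma_n\backslash\mathbb{B}^n$ and $\Gamma_f \to \Gamma_m\backslash\mathbb{B}^m$ to differ by a totally geodesic (in fact holomorphic, hence $\mathbb{B}^n \hookrightarrow \mathbb{B}^m$ coming from an inclusion of the associated groups) map; concretely, an isogeny identifies $\mathbb{V}_2$ with a direct summand of $\mathbb{V}_1$-type data, which pins down the differential $df$ up to the ambient isometry group and shows $f^*\omega_m = \lambda \,\omega_n$ for a constant $\lambda$, exactly as in the computation closing \Cref{sectionwithratner}. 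By the standard rigidity for Kähler metrics on ball quotients this means $f$ is a totally geodesic immersion.

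The main obstacle I expect is the second step: verifying that the two families of totally geodesic subvarieties, which a priori only agree on an infinite set, actually sweep out the same \emph{typical} Hodge locus rather than merely overlapping on an atypical part. One must rule out that all but finitely many $Z_\ell$ are atypical for one of the two VHSs (in which case \Cref{mainthmhl} does not apply) and instead show genericity of the sequence propagates to typicality — this is where the hypotheses that $S_n$ is a ball quotient (so totally geodesic subvarieties are abundant and equidistribute) and that the algebraic monodromy groups are $\mathbb{Q}$-simple both get used. A secondary technical point is choosing the auxiliary VHS on the arithmetic ball quotients so that its algebraic monodromy is genuinely $\mathbb{Q}$-simple and nontrivial, which for arithmetic lattices of simplest type follows from the standard construction but should be stated carefully.
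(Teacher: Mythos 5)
Your proposal takes the route the paper records in \Cref{rmkimplication}, pulling back two VHSs to $\Gamma_f$ and invoking the isogeny theorem \Cref{mainthmhl}. The paper's actual proof of \Cref{thm11} in \Cref{sectionimmersion} is quite different and considerably more elementary: it stays in the product $S_n \times S_m$, shows directly that each intersection $U_\ell = \Gamma_f \cap (Z_\ell \times Y_\ell)$ is \emph{atypical} there (a one-line codimension count that only uses $e := \codim_{S_m} Y_\ell > 0$), and then applies the geometric Zilber--Pink statement \Cref{geomzp00} to conclude that $\Gamma_f$ is contained in a strict totally geodesic subvariety $S_0\subset S_n\times S_m$. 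After the Hodge-genericity reduction this forces $S_0 = \Gamma_f$, whence $f$ is totally geodesic. No typicality is ever asserted, the isogeny machinery is bypassed entirely, and the two ambient ball quotients never have to be compared as VHSs on the same base.

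This matters because the second step of your argument has a genuine gap, one you correctly flag yourself. For $\mathbb{V}_1$ (the intrinsic VHS on $S_n$) every positive-dimensional totally geodesic $Z_\ell$ is indeed typical, since there the period map is the identity. But for $\mathbb{V}_2 = f^*\mathbb{V}_m$ there is no reason for $Z_\ell$ to lie in the \emph{typical} Hodge locus: the smallest Mumford--Tate subdomain for $\mathbb{V}_2|_{Z_\ell}$ can be much smaller than $Y_\ell$ (it depends on how $f(Z_\ell)$ sits inside $Y_\ell$, about which the hypothesis says nothing), and typicality then demands the numerical identity $\dim f(Z_\ell) = \dim D' + \dim f(S_n) - \dim D_m$ for that smallest subdomain $D'$, which is a nontrivial transversality condition not implied by $f(Z_\ell)\subset Y_\ell$. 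Without typicality for both factors the hypothesis $\HL(S,\mathbb{V}_1^\otimes)_{\pos,\typ}=\HL(S,\mathbb{V}_2^\otimes)_{\pos,\typ}\neq\emptyset$ of \Cref{mainthmhl} fails, and the density results of \cite{2021arXiv210708838B} that you invoke cannot rescue it: they take $\HL(S,\mathbb{V}_2^\otimes)_{\pos,\typ}\neq\emptyset$ as an input, not as an output. The paper's product-space argument sidesteps exactly this issue, since atypicality in $S_n\times S_m$ is automatic regardless of how the $Z_\ell$ behave for each factor VHS separately. If you want to pursue the route through \Cref{mainthmhl}, you would need to first argue that a cofinal subfamily of the $Z_\ell$ genuinely are typical for $\mathbb{V}_2$, for instance by a Noetherian induction on weakly special closures; that step is not spelled out in \Cref{rmkimplication} either, which is why the paper keeps the direct atypicality proof as the official argument.
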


We first state the \emph{geometric Zilber--Pink conjecture}, which is a theorem in our special case. In fact in the more general setting of arbitrary Shimura varieties this is a theorem of Daw and Ren \cite{MR3867286} (see also the more general \Cref{geometricZP}, which will be used later). First, recall that an intersection of subvarieties is \emph{atypical} if its dimension is smaller than the expected dimension coming from a differential topology codimension count.

\begin{theor}[Special case of \cite{MR3867286} and \cite{2021arXiv210708838B}]\label{geomzp00}
Let $Y$ be a closed irreducible subvariety of $S_n\times S_m$. If $Y$ has a Zariski dense set of atypical intersections with totally geodesic subvarieties of positive dimension, then $Y$ is contained in a strict complex totally geodesic subvariety $S_0 \subset S_n\times S_m$.
\end{theor}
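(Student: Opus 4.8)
The plan is to deduce \Cref{geomzp00} from the geometric part of the Zilber--Pink conjecture for Shimura varieties (equivalently, for the natural polarized $\Z$VHS), the point being that the arithmeticity of $S_n$ and $S_m$ puts us in exactly this setting. Since $\Gamma_n\subset\PU(1,n)$ and $\Gamma_m\subset\PU(1,m)$ are arithmetic, after replacing them by finite-index torsion-free subgroups the quotients $S_n$, $S_m$ are connected components of Shimura varieties (every arithmetic quotient of a complex ball is of this form), and hence $S_n\times S_m$ is a connected component of the Shimura variety attached to the product of the two $\Q$-groups. Under this identification, and over each factor, a positive-dimensional closed irreducible algebraic subvariety is totally geodesic for the locally symmetric metric if and only if it is weakly special, by the Moonen--Ullmo--Yafaev characterization of weakly special subvarieties. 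So the hypothesis of \Cref{geomzp00} says precisely that $Y$ carries a Zariski-dense union of atypical components $W$ of intersections $Y\cap T$, with $T$ a positive-dimensional weakly special subvariety of $S_n\times S_m$.

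I would then argue by contradiction: assume $Y$ is contained in no strict totally geodesic subvariety, i.e. $\langle Y\rangle_{\ws}=S_n\times S_m$, where $\langle\cdot\rangle_{\ws}$ denotes the weakly special closure. For any positive-dimensional weakly special $T$ and any irreducible component $W$ of $Y\cap T$ with $\dim W>\dim Y+\dim T-\dim(S_n\times S_m)$, we have $\langle W\rangle_{\ws}\subseteq T$, so
\[
\dim\langle W\rangle_{\ws}-\dim W\ \le\ \dim T-\dim W\ <\ \dim(S_n\times S_m)-\dim Y\ =\ \dim\langle Y\rangle_{\ws}-\dim Y ;
\]
thus $W$ is an atypical subvariety of $Y$ in the sense of the defect condition. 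By the geometric Zilber--Pink theorem of Daw--Ren \cite{MR3867286} (unconditional thanks to the hyperbolic Ax--Schanuel theorem), or equivalently by \Cref{geometricZP} applied to the canonical $\Z$VHS on $S_n\times S_m$ and its sub-variations, $Y$ contains only finitely many optimal subvarieties; since every atypical subvariety of $Y$ is contained in an optimal subvariety of no larger defect, and such a subvariety is necessarily a proper subvariety of $Y$, the atypical components $W$ all lie in a fixed finite union of proper subvarieties of $Y$. This contradicts the assumed Zariski density. Hence $\langle Y\rangle_{\ws}\subsetneq S_n\times S_m$, and we may take $S_0:=\langle Y\rangle_{\ws}$.

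The real work is not in the Zilber--Pink input, which we are entitled to cite, but in the dictionary between the three languages at play: totally geodesic subvarieties of the ball quotients, weakly special subvarieties of the ambient Shimura variety, and special subvarieties of its variation of Hodge structure; in particular one must verify that ``atypical'' in the crude differential-topological codimension count of \Cref{geomzp00} coincides with ``atypical'' for the defect formalism of \cite{MR3867286} (this is the content of the displayed inequality above). One should also record that passing to the finite \'etale covers needed to reach an honest Shimura variety preserves atypicality of intersections (the covers are unramified, so codimensions are unchanged) and Zariski density of the relevant point set (a closed subset of the cover surjecting onto a Zariski-dense set must be the whole cover). Finally, it is worth flagging that the \emph{weakly special} --- not merely special --- form of geometric Zilber--Pink is genuinely needed: as the remarks after \Cref{mainthmpre} indicate, the subvarieties $T$ one meets can be of the form $Z'\times\{w\}$ with $Z'$ a positive-dimensional sub-ball-quotient and $w$ a non-special point, and a statement confined to special subvarieties would not detect these atypical intersections.
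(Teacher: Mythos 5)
Your argument is correct and is exactly the deduction the paper has in mind: \Cref{geomzp00} is stated there without proof, as a special case of Daw--Ren \cite{MR3867286} and of \Cref{geometricZP}, and your write-up simply makes explicit the dictionary (totally geodesic $=$ weakly special, the crude codimension count $=$ the defect inequality $\delta(W)<\delta(Y)$) together with the standard finiteness-of-optimal-subvarieties contradiction. Your remark that the \emph{weakly} special form of geometric Zilber--Pink is genuinely needed --- to handle totally geodesic subvarieties such as $Z'\times\{w\}$ with $w$ a non-special point --- is accurate and worth recording.
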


\begin{proof}[Proof of \Cref{thm11}]

Define $ U_\ell \coloneqq \Gamma_f \cap (Z_\ell \times Y_\ell)$. The key calculation in the proof is the following.

\begin{lem}
The intersection $U_{\ell}$ between $\Gamma_f$ and $Z_{\ell}\times Y_{\ell}$ is atypical for infinitely many $\ell$, that is,
\begin{equation}\label{eq4.2}
\codim_{S_n\times S_m}(U_\ell) < \codim_{S_n\times S_m}(\Gamma_f) + \codim_{S_n\times S_m}(Z_\ell \times Y_\ell).
\end{equation}
\end{lem}

\begin{proof}
We can pass to a subsequence and assume that the dimensions of $Z_{\ell}$ and $Y_{\ell}$ are independent of $\ell$. Set $d=\dim Z_{\ell}$ and $\dim Y_{\ell}= m-e$. We then have:
\begin{align*}
\codim_{S_n\times S_m}(U_\ell)&=n+m-d \\
\codim_{S_n\times S_m}(\Gamma_f) + \codim_{S_n\times S_m}(Z_\ell \times Y_\ell) &= m+(n+m-d-m+e) \\ &= n+m-d+e
\end{align*}
Since $e>0$, Equation \eqref{eq4.2} holds.
\end{proof}

Since we assumed that $f(S_n)$ is monodromy generic, we must prove that $f(S_n)=S_m$. Applying \Cref{geomzp00} to $\Gam_f \subset S_n\times S_m$, we conclude that $\Gamma_f$ is contained in a \textbf{strict} totally geodesic subvariety $S_0$ of $S_n\times S_m$. As $f(S_n)$ is monodromy generic, the second projection of $S_0$ on $S_m$ is surjective. Therefore $S_0$ is a strict totally geodesic subvariety of $S_n\times S_m$ dominating both factors. This is possible only when $S_n$ is commensurable with $S_m$ and $S_0$ is a modular correspondence. Since $\Gamma_f \subset S_0$ have the same dimension and are irreducible, it follows that $\Gamma_f=S_0$, i.e., $f$ is a totally geodesic embedding.
\end{proof}

Using a similar argument we can also prove the following, where a sequence of geodesics is \emph{generic} when they are not contained in a finite union of maximal totally geodesic subvarieties.

\begin{cor}\label{corcor}
Let $S_n\coloneqq\Gamma _n \backslash \mathbb{B}^n$ be a finite-volume ball quotient with an immersive holomorphic map
\begin{displaymath}
f: S_n \longrightarrow S_m \coloneqq \Gamma _m \backslash \mathbb{B}^m
\end{displaymath}
to another finite-volume ball quotient. Let $\{C_\ell\}$ be a generic sequence of distinct closed geodesics in $S_n$ for which there exists a strict complex totally geodesic subvariety $Y_\ell \subset S_m$ such that $f(C_\ell) \subset Y_\ell$ for all $\ell$. Then $f$ is a totally geodesic immersion.
\end{cor}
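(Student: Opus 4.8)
The plan is to mimic the proof of \Cref{thm11}, replacing the totally geodesic subvarieties $Z_\ell \subset S_n$ by the closed geodesics $C_\ell$ and treating them as (possibly real) totally geodesic submanifolds of dimension one. First I would form the graph $\Gamma_f \subset S_n \times S_m$, which is isomorphic to $S_n$ under the first projection, and — exactly as before — reduce to the case where $f(S_n)$ is Hodge generic in $S_m$, since otherwise we replace $S_m$ by the smallest complex totally geodesic subvariety containing $f(S_n)$ and induct on $m$. Then for each $\ell$ I would look at the intersection $U_\ell \coloneqq \Gamma_f \cap (C_\ell \times Y_\ell)$ inside $S_n \times S_m$. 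Since $f(C_\ell) \subset Y_\ell$, the curve (the image of $C_\ell$ in $\Gamma_f$) is entirely contained in $U_\ell$, so $\dim U_\ell \ge 1$, while the codimension count gives $\codim(\Gamma_f) + \codim(C_\ell \times Y_\ell) = m + \big((n-1) + (m - \dim Y_\ell)\big)$, which is strictly larger than $\codim(U_\ell) \le n + m - 1$ precisely because $Y_\ell$ is a \emph{strict} subvariety of $S_m$ (so $\dim Y_\ell \le m-1$); thus the intersection is atypical for every $\ell$. The genericity hypothesis on $\{C_\ell\}$ — that their union is Zariski dense in $S_n$, hence the union of the corresponding curves in $\Gamma_f$ is Zariski dense in $\Gamma_f$ — then furnishes a Zariski dense set of positive-dimensional atypical intersections of $\Gamma_f$ with totally geodesic subvarieties, so \Cref{geomzp00} (or its generalization \Cref{geometricZP}) applies and forces $\Gamma_f$ into a strict complex totally geodesic $S_0 \subset S_n \times S_m$. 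Finally, as in the proof of \Cref{thm11}: Hodge genericity of $f(S_n)$ makes the projection $S_0 \to S_m$ surjective, while $\Gamma_f \cong S_n$ under the first projection forces $S_0 \cong S_n$ under the first projection as well, so $\Gamma_f$ is a totally geodesic subvariety and $f$ is a totally geodesic immersion.

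The one genuinely nontrivial point — and the step I expect to require care rather than the straightforward codimension bookkeeping — is that \Cref{geomzp00}, as stated, concerns atypical intersections of a subvariety $Y$ with \emph{complex} totally geodesic subvarieties of $S_n \times S_m$, whereas the $C_\ell$ are one-(real-)dimensional geodesics, which are totally geodesic \emph{submanifolds} but not complex subvarieties. So before invoking the geometric Zilber--Pink statement I would need to observe that the relevant \emph{ambient} atypical loci are still cut out by algebraic/Hodge-theoretic data: the point is that $C_\ell \times Y_\ell$ need not itself be the object to which one applies \Cref{geometricZP}, but rather one uses that the Zariski closure in $S_n \times S_m$ of the totally geodesic submanifold through $C_\ell$ of the sort produced by the stabilizer subgroups (cf.\ the discussion of closed $W$-orbits and the group $W$ in \Cref{sectioncomp}) is a complex totally geodesic subvariety, and that $U_\ell$ sits inside the intersection of $\Gamma_f$ with that complex subvariety, which remains atypical by the same count (a strictly smaller complex totally geodesic subvariety of $S_m$ still has codimension $\ge 1$). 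In other words, one replaces $C_\ell \times Y_\ell$ by the smallest complex totally geodesic subvariety of $S_n \times S_m$ containing it; its first factor is either all of $S_n$ (harmless — then the atypicality comes from the $Y_\ell$ side, exactly as for closed geodesics in the cautionary example of the introduction) or a strict complex totally geodesic subvariety, and in either case the codimension inequality of \Cref{thm11} persists because $\dim Y_\ell \le m-1$. Once this reduction to complex totally geodesic data is in place, the argument is verbatim that of \Cref{thm11}, and I would simply refer back to it for the final two sentences rather than repeat them.
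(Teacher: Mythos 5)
Your broad strategy of reducing to \Cref{thm11} by replacing $C_\ell$ with a complex totally geodesic object is the right one, and you correctly diagnose the obstruction: the $C_\ell$ are real $1$-dimensional, so $C_\ell \times Y_\ell$ is not a special subvariety and \Cref{geomzp00} cannot be applied to it directly. However, your fix has a genuine gap, and the paper resolves the issue by a different and much cleaner mechanism.

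The paper's key observation is the following chain. By Borel's extension theorem, $f$ is an \emph{algebraic} morphism, so $f^{-1}(Y_\ell)$ is Zariski closed, and since $f$ is immersive and $Y_\ell \subsetneq S_m$, $f^{-1}(Y_\ell)$ is a \emph{proper} Zariski-closed subset of $S_n$. Hence the Zariski closure $Z_\ell := \overline{C_\ell}$ is a proper subvariety of $S_n$ with $f(Z_\ell) \subset Y_\ell$. By the baby Ax--Lindemann theorem (\Cref{babyALW}), $Z_\ell$ is a \emph{complex} totally geodesic subvariety of $S_n$. Genericity of $\{C_\ell\}$ gives genericity of $\{Z_\ell\}$, and one is now verbatim in the situation of \Cref{thm11}. (One also needs to observe that $S_m$, and hence $S_n$, must be arithmetic; the paper does this at the outset and you do not address it.)

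Your substitute — taking the smallest complex totally geodesic subvariety of $S_n\times S_m$ containing $C_\ell\times Y_\ell$ — fails to identify either of the two tools that make the reduction work (algebraicity of $f$ via Borel, and Ax--Lindemann), and the bifurcation you contemplate is handled incorrectly. In the branch you call ``harmless,'' where the first factor of the envelope is all of $S_n$, the envelope is contained in a product $S_n \times Y'_\ell$ and the relevant intersection with $\Gamma_f$ is $f^{-1}(Y'_\ell)$. Atypicality of this intersection would require $\codim_{S_n} f^{-1}(Y'_\ell) < \codim_{S_m} Y'_\ell$, which is exactly the \emph{excess} intersection one cannot get for free from $\dim Y_\ell \le m-1$; transversality in fact predicts equality of those codimensions. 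So the atypicality does \emph{not} ``come from the $Y_\ell$ side'' automatically. The correct resolution is that this branch never occurs: $\overline{C_\ell} \subset f^{-1}(Y_\ell) \subsetneq S_n$, which is precisely the content of the Borel-plus-Ax--Lindemann step that is missing from your argument.
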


The proof of \Cref{corcor} uses the so called \emph{Baby Ax--Lindemann theorem} \cite[Prop. 2.6]{zbMATH06845358}, which we recall below in the special case we need.

\begin{prop}\label{babyALW}
Let $Z\subset \mathbb{B}^n$ be a real totally geodesic subvariety (of any real dimension). Let $\Gamma \subset \PU(1,n)$ be an arithmetic lattice, and $\pi : \mathbb{B}^n \to S_\Gamma=\Gamma \backslash \mathbb{B}^n $ be the associated locally Hermitian space. Then the Zariski closure of $ \pi (Z)$ is a complex totally geodesic subvariety of $S_\Gamma$.
\end{prop}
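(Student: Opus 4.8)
The natural route is via Ratner's orbit closure theorem, after first reducing to the case where $Z$ is a \emph{complex} totally geodesic subspace.

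\textbf{Step 1 (complexify).} Let $Z^{\mathbb C}\subset\mathbb{B}^n$ be the smallest complex totally geodesic subspace containing $Z$ — a complex sub-ball $\mathbb{B}^k$, equal to $Z$ if $Z$ is already complex. I will show $\overline{\pi(Z)}^{\,\Zar}=\overline{\pi(Z^{\mathbb C})}^{\,\Zar}$; only the inclusion ``$\supseteq$'' needs work. Given a closed subvariety $N\subseteq S_\Gamma$ with $\pi(Z)\subseteq N$, the preimage $\pi^{-1}(N)$ is a closed complex-analytic subset of $\mathbb{B}^n$ containing the connected totally geodesic submanifold $Z$, so an irreducible component $V$ of it contains an open piece — hence all — of $Z$. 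In local holomorphic coordinates in which $Z$ is a totally real linear subspace $\mathbb{R}^k\times\{0\}$, a holomorphic function vanishing on $V$ vanishes on $Z$, hence, by analytic continuation from $\mathbb{R}^k$ to $\mathbb{C}^k$, on $Z^{\mathbb C}$; so $Z^{\mathbb C}\subseteq V$ and $\pi(Z^{\mathbb C})\subseteq N$. As $N$ was arbitrary this gives the claim. Hence I may assume $Z=\mathbb{B}^k=U\cdot x_0$, where $U\cong\SU(1,k)$ is a totally geodesically embedded subgroup of $G\coloneqq\SU(1,n)$, $\mathbb{B}^n=G/K$ and $x_0=eK$.

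\textbf{Step 2 (Ratner).} Since $U$ is semisimple without compact factors it is generated by one-parameter unipotent subgroups, so Ratner's orbit closure theorem applies to the right $U$-action on $\Gamma\backslash G$: the orbit closure $\overline{\Gamma U}$ equals $\Gamma H$ for a closed subgroup $U\subseteq H$ with $\Gamma\cap H$ a lattice in $H$. Projecting along the proper map $\Gamma\backslash G\to\Gamma\backslash G/K=S_\Gamma$ (compact fibres, so it commutes with closures) gives $\overline{\pi(Z)}^{\,\mathrm{top}}=\pi(H\cdot x_0)$. Because $G$ has real rank one, a proper parabolic of $G$ has compact semisimple Levi factor, so $H$ — containing the noncompact semisimple $U$ — has trivial unipotent radical and is reductive; a reductive subgroup of $G$ containing $U\cong\SU(1,k)$ has the form $\SU(1,k')\cdot C$ with $C$ compact, whose orbit through $x_0$ is the complex sub-ball $\mathbb{B}^{k'}$ (it must be complex and not totally real, since it contains $Z=\mathbb{B}^k$ and hence complex lines). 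Thus $\overline{\pi(Z)}^{\,\mathrm{top}}=\pi(\mathbb{B}^{k'})$ is a closed complex totally geodesic subvariety of $S_\Gamma$; being closed it is Zariski closed, so it equals $\overline{\pi(Z)}^{\,\Zar}$, as desired.

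\textbf{Main obstacle.} The real content is Ratner's theorem; the two points requiring care are the precise identification of the Ratner subgroup $H$ — that it is reductive of the expected type, for which real rank one is essential (in higher rank one could pick up a unipotent radical and $\pi(Hx_0)$ would be a non-algebraic horospherical set) — and the passage from the topological closure to the Zariski closure, which is valid precisely because the complex case produces a \emph{closed algebraic} orbit. This last point is why the complexification in Step 1 is indispensable: for a totally real $Z$ the topological closure $\overline{\pi(Z)}^{\,\mathrm{top}}$ may be a totally real, hence non-algebraic, submanifold, so one cannot work with it directly.
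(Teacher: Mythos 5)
The paper does not actually prove \Cref{babyALW}: it is quoted from the literature (the ``Baby Ax--Lindemann theorem'' of \cite{zbMATH06845358}), so your Ratner-based argument is a genuine proof where the paper offers only a citation. Your route is the natural dynamical one, and it is essentially correct: the complexification step is sound (every totally geodesic submanifold of $\mathbb{B}^n$ is either a complex sub-ball or a maximal totally real $\mathbb{H}^k_{\mathbb{R}}$ inside one, so the identity principle applies exactly as you say), and the reduction to an orbit closure in $\Gamma\backslash G$ is the same mechanism the paper itself uses in \Cref{sectioncomp} and \Cref{sectionwithratner}. Two points deserve more care than you give them. First, in Step 2 you assert that the Ratner group $H$ is of the form $\SU(1,k')\cdot C$ \emph{and} that its orbit through the base point $x_0$ is a totally geodesic $\mathbb{B}^{k'}$; the second claim is not automatic from the first, since a group conjugate to a standard $\SU(1,k')$ has exactly one totally geodesic orbit and all its other orbits (equidistant loci) are not totally geodesic. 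One must check that $x_0$ lies on the distinguished orbit; this follows because $U x_0=\mathbb{B}^k$ sits at constant distance from the totally geodesic orbit $Y$ of the noncompact factor of $H$, and a complete geodesic at constant positive distance from a totally geodesic subspace cannot exist in negative curvature, forcing $x_0\in Y$. This identification is precisely the content of \cite[Lem.\ 8.2]{BFMS2}, which the paper invokes for the same purpose, so you should either cite it or supply the argument. Second, ``being closed it is Zariski closed'' is immediate only when $S_\Gamma$ is compact; in the finite-volume case you need the standard fact that a closed totally geodesic complex subvariety of a ball quotient is algebraic (e.g.\ via extension to the Baily--Borel compactification, or Borel's theorem as used in the proof of \Cref{corcor}). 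Neither issue is a genuine gap in the sense of an idea that fails, but both are exactly the places where the proof leans on nontrivial inputs beyond Ratner.
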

 
\begin{proof}[Proof of \Cref{corcor}]
Note that we may reduce to the case that $S_m$ is arithmetic. Indeed, if $S_m$ were nonarithmetic the fact that $\{C_\ell\}$ is generic implies that, after passing to maximal proper totally geodesic subvarieties, $f(S_n)$ is contained in a finite union of totally geodesic subvarieties. Since the map is a smooth immersion, the image is contained in a single such subvariety $S^\prime$. Replacing $S_m$ with $S^\prime$, we see that $S^\prime$ must have infinitely many distinct maximal totally geodesic subvarieties (see \cite{BFMS2} or \cite{bu}), and hence is arithmetic.

By assumption, $C_\ell\subset f^{-1}(Y_\ell)$. Since $f$ is in fact an algebraic morphism by Borel's extension theorem \cite{zbMATH03394558}, the right-hand side is Zariski closed and 
\begin{displaymath}
Z_\ell \coloneqq \overline{C_\ell} \subset f^{-1} (Y_\ell)\subsetneq S_n.
\end{displaymath}
Thanks to \Cref{babyALW}, $\overline{C_\ell}$ is a complex totally geodesic subvariety of $S_n$, which we denote by $Y_\ell$. Since the sequence $\{C_\ell\}$ is generic, we can apply \Cref{thm11} to conclude the proof.
\end{proof}

\section{Hodge theoretic preliminaries and a quick introduction to Zilber--Pink}\label{hodgeprel}

We begin this section by recalling the Andr\'{e}--Oort and Zilber--Pink conjectures and some formalism from Hodge theory. Loosely speaking, our results on (closed) geodesics are inspired by the analogy between closed geodesics and CM points\footnote{A point of a Shimura variety is called \emph{CM} if its associated Mumford--Tate group is a torus.} and the following.

\begin{theor}[Special case of Andr\'{e}--Oort in $S_1\times S_2$, {\cite{2021arXiv210908788P}}]\label{aoformaps}
A holomorphic map $f:S_1\to S_2$ between Shimura varieties sending a Zariski dense set of CM points of $S_1$ to CM points of $S_2$, is a morphism of Shimura varieties. In particular, it is totally geodesic.
\end{theor}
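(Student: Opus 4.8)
The plan is to deduce this from the Andr\'e--Oort conjecture, now a theorem \cite{2021arXiv210908788P}, applied to the graph of $f$ inside the product Shimura variety $S_1\times S_2$. By Borel's extension theorem \cite{zbMATH03394558} the holomorphic map $f$ is algebraic, so its graph $\Gamma_f\subset S_1\times S_2$ is a closed irreducible subvariety and the first projection restricts to an isomorphism $\mathrm{pr}_1\colon\Gamma_f\xrightarrow{\,\sim\,}S_1$. The first observation I would record is that a point $(x_1,x_2)$ of $S_1\times S_2$ is a CM point precisely when both $x_1$ and $x_2$ are: its Mumford--Tate group $\mathrm{MT}(x_1,x_2)$ is a subgroup of $\mathrm{MT}(x_1)\times\mathrm{MT}(x_2)$ surjecting onto each factor, hence it is a torus if and only if both $\mathrm{MT}(x_1)$ and $\mathrm{MT}(x_2)$ are. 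Thus, writing $\Sigma\subset S_1(\mathbb{C})$ for the given Zariski dense set of CM points whose image under $f$ consists of CM points, the set $\{(x,f(x)):x\in\Sigma\}$ is a set of CM points of $S_1\times S_2$, and it is Zariski dense in $\Gamma_f$ because it corresponds to $\Sigma$ under $\mathrm{pr}_1$.

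Next I would invoke the Andr\'e--Oort theorem: the Zariski closure in a Shimura variety of a set of CM points is a finite union of special subvarieties. Since that closure is $\Gamma_f$ and $\Gamma_f$ is irreducible, $\Gamma_f$ is a special subvariety of $S_1\times S_2$. Then I would unwind what this means. Up to passing to a connected component, $\Gamma_f$ is the image of the morphism of Shimura varieties attached to a Shimura subdatum $(H,X_H)\hookrightarrow(G_1,X_1)\times(G_2,X_2)$; denoting by $Z_i\subset S_i$ the (special) subvariety $\overline{\mathrm{pr}_i(\Gamma_f)}$, one gets $\Gamma_f\subseteq Z_1\times Z_2$ with both projections $\Gamma_f\to Z_i$ dominant morphisms of Shimura varieties. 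Because $\mathrm{pr}_1\colon\Gamma_f\to S_1$ is an isomorphism we obtain $Z_1=S_1$ and $\dim\Gamma_f=\dim S_1$, so $\Gamma_f$ is the graph of the composite $S_1\xrightarrow{\,\sim\,}\Gamma_f\to Z_2\hookrightarrow S_2$, which is a composition of morphisms of Shimura varieties and hence itself one; and it necessarily coincides with $f$. Finally, every special subvariety is totally geodesic, being the image under the uniformization of a sub-Hermitian symmetric domain, so $f$ is totally geodesic as well.

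The hard part is entirely contained in the Andr\'e--Oort input \cite{2021arXiv210908788P}, which I would treat as a black box (its proof combining o-minimal point counting, Galois orbit lower bounds via the averaged Colmez conjecture, and a good deal more); everything after that is formal structure theory of Shimura data. The one step that requires a little care is the passage from ``$\Gamma_f$ special with $\mathrm{pr}_1$ an isomorphism'' to ``$f$ is a morphism of Shimura varieties'': one must keep track of connected components and level structures and use that a dominant, generically injective morphism of Shimura varieties induced by a morphism of connected Shimura data is an isomorphism onto its image. This bookkeeping is standard and does not affect the geometric conclusion.
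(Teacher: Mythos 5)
The paper itself does not spell out a proof of this statement: it records it as a special case of Andr\'e--Oort in the product $S_1\times S_2$ and points the reader to the references. Your argument fills in exactly the intended reasoning, and it is correct: Borel extension makes $f$ algebraic so the graph $\Gamma_f$ is a closed irreducible subvariety isomorphic to $S_1$ via $\mathrm{pr}_1$; the observation that $\mathrm{MT}(x_1,x_2)$ sits inside $\mathrm{MT}(x_1)\times\mathrm{MT}(x_2)$ and surjects onto each factor correctly characterizes CM points of the product as pairs of CM points; Andr\'e--Oort then forces $\Gamma_f$ to be special; and since every special subvariety is totally geodesic and the first projection from $\Gamma_f$ is an isomorphism onto $S_1$, the map $f$ is totally geodesic and is a morphism of Shimura varieties. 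The one bookkeeping point you flag (passing from ``$\Gamma_f$ special with $\mathrm{pr}_1$ an isomorphism'' to ``$f$ is a morphism of Shimura data'') is indeed the only step requiring care and is handled in the standard way you indicate, via the subdatum $(H,X_H)$ projecting onto $(G_1,X_1)$.
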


For this we refer the reader to \cite{MR3821177, 2021arXiv210708838B}.

\begin{rmk}
It may be interesting to compare this to what happens with abelian varieties. If $A,B$ are complex abelian varieties, any morphism $f: A \to B$ has totally geodesic image. This follows from a simple \emph{Rigidity lemma} as in \cite[Cor 1, page 43]{zbMATH03353452}. If $f$ sends a torsion point of $A$ to a torsion point of $B$, then the aforementioned corollary shows that $f$ is in fact a homomorphism. This latter statement is the analogue of \Cref{aoformaps} and shows that a version of Siu's immersion problem (\Cref{siuquestion}) holds for abelian varieties.
\end{rmk}

\subsection{Hodge theory and period domains}

We begin with some general notation and conventions. In what follows, an algebraic variety $S$ is a reduced scheme of finite type over the field of complex numbers, but may be reducible. If $S$ is an algebraic (resp.\ analytic) variety, by a subvariety $Y \subset S$ we always mean a \emph{closed} algebraic (resp.\ analytic) subvariety.

A $\Q$-Hodge structure of weight $n$ on a finite dimensional $\Q$-vector space $V$ is a decreasing filtration $F^\bullet$ on the complexification $V_\C$ such that
\[
V_\C= \bigoplus_{p\in \Z} F^{p} \oplus \overline{F^{n-p}}.
\]
The category of pure $\mathbb{Q}$-Hodge-structures is Tannakian and moreover semisimple when considering polarizable Hodge structures, as we frequently will. The Mumford--Tate group $\MT(V) \subset \GL(V)$ of a $\mathbb{Q}$-Hodge structure $V$ is the Tannakian group of the Tannakian subcategory $\langle V\rangle ^\otimes$ of $\mathbb{Q}$-Hodge structures generated by $V$. Equivalently, $\MT(V)$ is the smallest $\mathbb{Q}$-algebraic subgroup of $\GL(V)$ whose base-change to $\mathbb{R}$ contains the image of $h: \mathbb{S} \to \GL(V_{\mathbb{R}})$. It is also the stabilizer in $\GL(V)$ of the Hodge tensors for $V$. When $V$ is polarized, this is a reductive algebraic group.
 
In the two sections below we recall the setting and the main results from \cite{2021arXiv210708838B}. We refer also to the companion \cite{2023arXiv231211246B} (especially Section 2 in \emph{op. cit.}) for a quick introduction to this circle of ideas.

\subsection{Typical and atypical intersections}

To understand a VHS $\VV \to S$, we consider the associated holomorphic period map 
\begin{equation} \label{period0}
 \Phi: S^{\an} \longrightarrow \Gamma \backslash D,
\end{equation}
which completely describes $\VV$. We let $(\G, D)$ denote the generic Hodge datum of $\VV$ and $\Gamma \backslash D$ the associated Hodge variety. The Mumford--Tate domain $D$ decomposes as a product $D_1 \times \cdots \times D_k$ according to the decomposition of the adjoint group $\G^\ad$ into a product $\G_1 \times \cdots \times \G_k$ of simple factors, where some $\G_i$ may be $\RR$-anisotropic. Replacing $S$ by a finite \'etale cover and reordering the factors if necessary, the lattice $\Gamma \subset\G^\ad(\R)^+$ decomposes as a direct product $\Gamma \cong \Gamma_1 \times \cdots \times \Gamma_r$ with $ r \leq k$, where $\Gamma_i\subset \G_i(\R)^+$ is an arithmetic lattice for each $i$. Writing $D_{\mathrm{triv}} = D_{r+1} \times \cdots \times D_k$ for the product of factors where the monodromy is trivial, which includes all of the factors $D_i$ for which $\G_i$ is $\RR$-anisotropic, the period map can be written as
\begin{equation} \label{period}
\Phi: S^{\an} \longrightarrow \Gamma \backslash D \cong \Gamma_1 \backslash D_1 \times \cdots \times \Gamma_r \backslash D_r \times D_{\mathrm{triv}},
\end{equation}
where the projection of $\Phi(S^\an)$ to $D_{\mathrm{triv}}$ is a point.

We distinguish between special subvarieties $Z$ of \emph{zero period dimension}, which are geometrically elusive, and those of \emph{positive period dimension}.

\begin{defi} \label{positive period dimension} \label{fpositive} \hfill
 \begin{enumerate}
  \item
 A subvariety $Z$ of $S$ is said to be of {\em positive period dimension for $\VV$} if $\Phi(Z^{\an})$ has positive dimension, that is, if $\dim_\C\Phi(Z^\an)>0$.
 \item The {\em Hodge locus of positive period dimension}, $\HL(S, \VV^\otimes)_{\pos}$, is the union of the special subvarieties of $S$ for which $\VV$ has positive period dimension.
\end{enumerate}
\end{defi}

Using period maps, special subvarieties can also be defined as {\em intersection loci}. Indeed, a closed irreducible subvariety $Z \subset S$ is special for $\VV$ precisely when $Z^{\an}$ coincides with an analytic irreducible component $\Phi^{-1}(\Gamma^\prime \backslash D^\prime)^0$ of $\Phi^{-1}(\Gamma^\prime \backslash D^\prime)$, for $(\G^\prime, D^\prime) \subset(\G, D)$ the generic Hodge subdatum of $Z$ and $\Gamma^\prime \backslash D^\prime\subset \Gamma \backslash D$ the associated Hodge subvariety. We will equivalently say that $Z$ is special for $\Phi$.

\begin{defi}\label{atypical}
 Let $Z = \Phi^{-1}(\Gamma^\prime \backslash D^\prime)^0\subset S$ be a special subvariety for $\VV$ with generic Hodge datum $(\G^\prime, D^\prime)$. Then $Z$ is said to be \emph{atypical} if $\Phi(S^{\an})$ and $\Gamma^\prime \backslash D^\prime$ do not intersect generically along $\Phi(Z^\an)$. That is, $Z$ is atypical when
 \begin{equation} \label{equation atypical}
  \codim_{\Gamma\backslash D} \Phi(Z^{\an}) < \codim_{\Gamma\backslash D} \Phi(S^{\an}) + \codim_{\Gamma\backslash D} \Gamma^\prime\backslash D^\prime.
 \end{equation}
Otherwise $Z$ is said to be \emph{typical}. The \emph{atypical Hodge locus} $\HL(S,\VV^\otimes)_{\atyp} \subset \HL(S, \VV^\otimes)$ (resp.\ the \emph{typical Hodge locus} $\HL(S,\VV^\otimes)_{\typ} \subset \HL(S, \VV^\otimes)$) is the union of the atypical (resp.\ strict typical) special subvarieties of $S$ for $\VV$.
\end{defi}

\begin{rmk}\label{rmkonnormalizations}
Since it will be important in the sequel, we remark here that the notion of typicality and atypicality is always computed with respect to the smallest special subvariety of $\Gamma \backslash D$, usually called the special closure. It can, and does, happen that 
\begin{displaymath}
Z=\Phi^{-1}(\Gamma^\prime \backslash D^\prime)^0=\Phi^{-1}(\Gamma'' \backslash D'')^0
\end{displaymath}
for some bigger $\Gamma'' \backslash D''$, but computing codimensions with the latter would give the wrong conclusions.
\end{rmk}

Let $\VV$ be a polarizable $\ZZ$VHS on an irreducible smooth quasi-projective variety $S$, from \cite{2021arXiv210708838B} we expect:

\begin{conj}[Zilber--Pink conjecture for the atypical Hodge locus, strong version] \label{main conj}
The atypical Hodge locus $\HL(S,\VV^\otimes)_{\atyp}$ is a finite union of atypical special subvarieties of $S$ for $\VV$.
\end{conj}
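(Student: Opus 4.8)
The plan is to attack this (still open) conjecture via the Pila--Zannier ``point-counting'' strategy, which has become feasible for period maps thanks to the definability and functional transcendence theorems of Bakker--Klingler--Tsimerman and Bakker--Tsimerman; since a complete argument is not available, I describe the reduction steps and isolate the missing arithmetic input.

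First I would split $\HL(S,\VV^\otimes)_{\atyp}$ into its positive period dimension part and the part consisting of atypical special subvarieties contracted to a point by the period map $\Phi$. For the positive part one applies the \emph{geometric Zilber--Pink theorem} of \cite{2021arXiv210708838B} (the general form of \Cref{geomzp00}): combining the definability of $\Phi$ in $\Ranexp$ with the Ax--Schanuel theorem for $\ZZ$VHS, the atypical special subvarieties whose algebraic monodromy group is not a proper direct factor of the ambient algebraic monodromy already form a finite union. Using the product decomposition \eqref{period} of the generic Hodge datum, what survives is the ``factorwise'' atypical locus, and projecting to the relevant simple factor $\Gamma_i\backslash D_i$ one is reduced to an Andr\'{e}--Oort-type finiteness statement: there should be only finitely many special points of $\Gamma_i\backslash D_i$ on the image of $\Phi$, with the complexity of their Hodge data under control.

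On that residual problem I would run the usual dichotomy. Each atypical special subvariety is parametrized by an algebraic point of a fixed definable set (a fundamental domain for the period map) and carries a complexity coming from its Hodge datum. Were the conjecture false, infinitely many such points would have polynomially bounded complexity; the Pila--Wilkie counting theorem would then place them on a positive-dimensional connected semialgebraic block, and Ax--Schanuel would upgrade that block to a weakly special subvariety containing infinitely many of the special subvarieties in question, contradicting the maximality with which they can be chosen. Thus the whole conjecture reduces to a lower bound: the field of definition -- equivalently, the size of the Galois (or monodromy) orbit -- of a special point of the Hodge variety must be bounded below by a positive power of its complexity.

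The main obstacle is exactly this last input. A ``large Galois orbits'' estimate for special points of an arbitrary Mumford--Tate domain is not known in general: for Hodge varieties of abelian type it can be imported from the Masser--W\"{u}stholz isogeny estimates and Tsimerman's work behind Andr\'{e}--Oort for $\mathcal{A}_g$, but for a general polarizable $\ZZ$VHS there is no ambient Shimura variety, no abelian scheme to compare against, and no analogue of the (averaged) Colmez height bound, so the required arithmetic statement is genuinely missing. I would therefore expect a complete proof only under an abelian-type hypothesis, or conditionally on the large Galois orbits conjecture, with the general case beyond the reach of current methods.
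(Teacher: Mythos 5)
This is a \emph{conjecture} in the paper, not a theorem: it is stated (following \cite{2021arXiv210708838B}) as motivation, and the paper makes no claim to prove it. There is therefore no ``paper proof'' to compare against. You correctly recognized this, and your writeup is an honest strategy outline rather than a proof, which is the appropriate response.

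Your sketch accurately reflects the state of the art. The decomposition into positive period dimension and zero period dimension pieces, the use of the geometric Zilber--Pink theorem (\Cref{geometricZP}) to control the positive-dimensional part via $\Ranexp$-definability of period maps and Ax--Schanuel, the reduction of the residual zero-dimensional problem to an Andr\'e--Oort-type finiteness statement, and the Pila--Wilkie counting dichotomy are all exactly the route currently pursued for such problems. Your identification of the ``large Galois orbits'' lower bound as the genuine missing arithmetic ingredient outside the abelian-type setting (where Masser--W\"ustholz and the work behind Andr\'e--Oort for $\mathcal{A}_g$ can be imported) is also correct and is the recognized obstruction in the literature. One small caveat worth flagging: the geometric Zilber--Pink theorem does not by itself prove finiteness even of $\HL(S,\VV^\otimes)_{\pos,\atyp}$; its alternative (b) leaves open the possibility of families arising from a product decomposition of the Mumford--Tate group, and disposing of those cases is part of what the residual arithmetic/Andr\'e--Oort input must accomplish, not something already handled before projecting to a simple factor. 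With that nuance noted, your assessment that a complete argument is out of reach in general, and available only conditionally or under an abelian-type hypothesis, is correct.
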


\begin{conj}[Density of the typical Hodge locus] \label{conj-typical}
 If $\HL(S, \VV^\otimes)_\typ$ is not empty then it is analytically dense in $S$.
\end{conj}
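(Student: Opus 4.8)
The plan is to deduce the density of $\HL(S,\VV^\otimes)_\typ$ from the existence of a single typical special subvariety, by moving it around inside its ambient Mumford--Tate domain. After the standard reductions (replacing $S$ by a finite \'etale cover and working one simple factor at a time) one may assume the generic Hodge datum is $(\G,D)$ with $\G$ semisimple adjoint, that the algebraic monodromy group of $\VV$ equals $\G$, and that the period map $\Phi\colon S^\an\to\Gam\bs D$ lifts to $\widetilde\Phi\colon\widetilde S\to D$ on the universal cover $\widetilde S\to S$. Write $\delta=\dim_\C\widetilde\Phi(\widetilde S)$. A positive-period-dimension special subvariety of ``type'' $(\G',D')$ is, modulo the action of $\Gam$, an irreducible component of $\widetilde\Phi^{-1}(gD')$ for $g$ ranging over a suitable countable subset of $\G(\Q)^+$, and it is typical precisely when $\widetilde\Phi(\widetilde S)$ and $gD'$ meet along it in the expected codimension $\codim_D\widetilde\Phi(\widetilde S)+\codim_D gD'$.

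First I would exploit the hypothesis $\HL(S,\VV^\otimes)_\typ\neq\emptyset$: fixing a typical special subvariety $Z_0$ of type $(\G_0,D_0)$, its existence already forces the expected intersection dimension $\delta+\dim D_0-\dim D$ to be $\geq 0$, because it equals $\dim_\C\widetilde\Phi(\widetilde Z_0)$. Next, fix an arbitrary point $q\in\widetilde\Phi(\widetilde S)$. Since $\G(\R)$ acts transitively on $D$, the set $\{g\in\G(\R):q\in gD_0\}$ is a nonempty real-analytic submanifold of $\G(\R)$, and the crucial step is a \emph{general position} statement: a generic such $g$ has $gD_0$ meeting $\widetilde\Phi(\widetilde S)$ transversally in a neighbourhood of $q$, so that near $q$ the intersection has exactly the expected nonnegative dimension and is therefore typical. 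Granting this, and using that typicality is an open condition on $g\in\G(\R)$, the elements of $\G(\R)$ producing a typical intersection passing near $q$ form a nonempty open set $\mathcal U_q$.

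To finish, I would invoke the density of $\G(\Q)^+$ in $\G(\R)^+$: since the genuine special subvarieties of type $(\G_0,D_0)$ correspond to rational $g$, the open set $\mathcal U_q$ contains rational elements $g'$, each of which yields a typical special subvariety $\widetilde\Phi^{-1}(g'D_0)$ whose image in $S$ meets an arbitrarily small neighbourhood of the point of $S$ lying under $q$ (here one uses that $\widetilde\Phi$ is open onto its image over a dense open locus of $\widetilde S$). Letting $q$ range over $\widetilde\Phi(\widetilde S)$ and $g'$ over rational approximants shows that $\HL(S,\VV^\otimes)_\typ$ accumulates at every point of a dense open subset of $S$, hence is analytically dense in $S$.

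The hard part is the general position step, and this is precisely why the statement remains conjectural. Kleiman's transversality theorem would settle it at once if $D$ were a smooth projective variety and $\widetilde\Phi(\widetilde S)$, $D_0$ were algebraic subvarieties transported by an algebraic group action; but $D$ is only a homogeneous complex manifold, $\widetilde\Phi(\widetilde S)$ is the transcendental image of a period map, and the Mumford--Tate subdomains $gD_0$ are moved only by the \emph{real} group $\G(\R)$. One therefore needs an analytic replacement controlling how $\widetilde\Phi(\widetilde S)$ can sit relative to the whole $\G(\R)$-orbit of $D_0$ --- this is where the Ax--Schanuel theorem for period maps and a careful analysis of the incidence geometry of Mumford--Tate subdomains would have to enter, and in the presence of certain factors (notably those relevant to ball quotients) it is not known that the required transversality can be arranged.
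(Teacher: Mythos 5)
The statement you were asked to prove is \Cref{conj-typical}, which the paper states explicitly as a \emph{conjecture}: there is no proof in the paper to compare against, and the statement is open. Your writeup is therefore best read as a strategy sketch, and to your credit you correctly identify where it breaks. The gap you name is real and is precisely the reason the statement remains conjectural: you need that for a generic $g$ in the real-analytic incidence set $\{g \in \G(\R) : q \in gD_0\}$ the translate $gD_0$ meets $\widetilde\Phi(\widetilde S)$ transversally near $q$, but $\widetilde\Phi(\widetilde S)$ is the transcendental image of a period map, the translates are moved only by $\G(\R)$ rather than by an algebraic group acting on a projective variety, and no Kleiman-type or Ax--Schanuel-type statement currently guarantees this. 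Note also that the full conjecture must handle typical special subvarieties of \emph{zero} period dimension (where the expected intersection dimension is $0$), which is exactly the regime your dimension count and openness argument control least.

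For contrast, the weaker statement that \emph{is} proven --- \Cref{typicallocus}, density of $\HL(S,\VV^\otimes)_{\pos,\typ}$ when nonempty, quoted from \cite{2021arXiv210708838B} --- is obtained by a genuinely different mechanism than the one you propose. As the paper itself remarks after \Cref{typicallocus}, one first shows that the full positive-period-dimension Hodge locus $\HL(S,\VV^\otimes)_{\pos}$ is analytically dense (this is where an argument in the spirit of yours, producing translates $gD_0$ through a given point, does appear, but without needing to certify typicality of each individual intersection), and then invokes the geometric Zilber--Pink theorem (\Cref{geometricZP}) to conclude that $\HL(S,\VV^\otimes)_{\pos,\atyp}$ is contained in a proper algebraic subvariety; the typical part is what remains and is therefore dense. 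Your plan tries to manufacture typical intersections one at a time and would bypass Zilber--Pink entirely, but the general-position step it rests on is not available, so the argument as written does not close.
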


\subsection{Some results on the distribution of the Hodge locus}

We now recall some results from \cite{2021arXiv210708838B}, namely Theorem 6.1, Theorem 10.1, and Remark 10.2 therein. To simplify the statements the reader can first assume that the generic Mumford--Tate group is simple, however later it will be used in a product situation.

\begin{theor}[Geometric Zilber--Pink]\label{geometricZP}
Let $\VV$ be a polarizable $\ZZ$VHS on a smooth connected complex quasi-projective variety $S$, with generic Hodge datum $(\G, D)$ and $Z$ be an irreducible component of the Zariski closure of the union of the atypical special subvarieties of positive period dimension in $S$. Then either
\begin{itemize}
 \item[(a)] $Z$ is a maximal atypical special subvariety, or
  \item[(b)] the adjoint Mumford--Tate group $\G_Z^\ad$ decomposes as a nontrivial product $\HH^\ad_Z \times \LL_Z$, $Z$ contains a Zariski-dense set of fibers of $\Phi_{\LL_{Z}}$ which are atypical weakly special subvarieties of $S$ for $\Phi$, where (possibly up to an \'{e}tale covering)
\[
\Phi_{|Z^\an}= (\Phi_{\HH_{Z}}, \Phi_{\LL_{Z}}): Z^\an \longrightarrow \Gamma_{\G_{Z}}\backslash D_{\G_{Z}}= \Gamma_{\HH_{Z}}\backslash D_{\HH_{Z}} \times \Gamma_{\LL_{Z}}\backslash D_{\LL_{Z}} \subset \Gamma \backslash
D,
\]
and $Z$ is Hodge generic in a special subvariety $\Phi^{-1}(\Gamma_{\G_{Z}}\backslash D_{\G_{Z}})^0$ of $S$ for $\Phi$ which is monodromically typical and therefore typical.
\end{itemize}
\end{theor}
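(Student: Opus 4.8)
This is \cite[Theorems~6.1 and~10.1]{2021arXiv210708838B}, the case of Shimura varieties going back to Daw--Ren \cite{MR3867286}; since it underpins everything that follows, we indicate the plan. The argument rests on three inputs. First, the algebraicity of the Hodge locus (Cattani--Deligne--Kaplan, together with the o-minimal GAGA refinements of Bakker--Brunebarbe--Tsimerman), which lets one treat $Z$, its special closure $\langle Z\rangle=\Phi^{-1}(\Gamma_{\G_Z}\backslash D_{\G_Z})^0$, and every special subvariety of $S$ as closed algebraic subvarieties carrying a sub-Hodge datum. Second, the Andr\'e--Deligne theorem that the algebraic monodromy group $\HH_W$ of any closed irreducible $W\subset S$ is normal in $\G_W^{\der}$, producing a canonical decomposition $\G_W^{\ad}=\HH_W^{\ad}\times\LL_W$ and, after an \'etale cover, a factorization $\Phi_{|W^\an}=(\Phi_{\HH_W},\Phi_{\LL_W})$ realizing $W$ as Hodge generic in a connected component of a fibre of $\Phi_{\LL_W}$. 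Third, and decisively, the Ax--Schanuel theorem for variations of Hodge structures of Bakker--Tsimerman, used through its corollary on \emph{propagation of atypicality}: an atypical component $U$ of an intersection $\Phi(S^\an)\cap W$ with $W\subset\Gamma\backslash D$ weakly special forces the smallest weakly special subvariety of $\Gamma\backslash D$ through $U$ to be no larger than the one cut out by the algebraic monodromy of $\Phi^{-1}(U)^0$, unless $W$ can be enlarged while staying atypical along $U$.

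First I would reduce to the Hodge generic case by replacing $S$ with $\langle Z\rangle$, so that $\G_Z=\G$ and the hypothesis becomes: $Z$ is a component of the Zariski closure of a Zariski-dense family $\{Z_i\}$ of atypical special subvarieties of positive period dimension. Set $\HH=\HH_Z\trianglelefteq\G^{\der}$ and $\G^{\ad}=\HH^{\ad}\times\LL$, and factor $\Phi=(\Phi_\HH,\Phi_\LL)$ after an \'etale cover. The governing dichotomy is whether $\LL$ is trivial. If $\HH_Z=\G^{\der}$, so $\LL$ is trivial, a Baire-category argument on the countable family $\{Z_i\}$ combined with propagation of atypicality shows that such a Zariski-dense family of atypical special subvarieties of positive period dimension cannot live inside $\langle Z\rangle$ unless $Z$ is itself a special subvariety, atypical already in the original $S$; maximality is then formal, since a strictly larger atypical special subvariety of positive period dimension would contain, hence enlarge, the component $Z$ of the same Zariski closure --- this is conclusion~(a). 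If $\LL$ is nontrivial one is steering toward~(b): the goal is to show that, after discarding finitely many indices and passing to \'etale covers, each $Z_i$ lies in a single fibre $\Phi_\LL^{-1}(t_i)^0$, is cut out atypically inside it, and that these fibres are Zariski dense in $Z$; then $\langle Z\rangle=\Phi^{-1}(\Gamma_{\G_Z}\backslash D_{\G_Z})^0$ must be monodromically typical (its algebraic monodromy being all of $\G_Z^{\der}$) and therefore typical --- for otherwise $\langle Z\rangle$ itself would be an atypical special subvariety strictly containing the $Z_i$, and we should have begun the argument with it.

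To pin down the second case, I would induct on $\dim Z$. Inside a fixed fibre $F=\Phi_\LL^{-1}(t)^0$ one restricts the $\Z$VHS to $F$, whose period map is $\Phi_\HH|_F$ and whose generic Mumford--Tate group is governed by $\HH$, and checks, keeping careful track of the codimension inequality of \Cref{atypical} as the ambient variety shrinks, that the atypical $Z_i$ contained in $F$ are again components of the Zariski closure of atypical special subvarieties of positive period dimension for $\VV_{|F}$ inside the strictly lower-dimensional variety $F$. Feeding back the inductive conclusion produces the atypical weakly special leaves and the monodromic typicality asserted in~(b); the accompanying bookkeeping --- sorting which $Z_i$ falls into which case, and transferring the codimension count to the restricted VHS --- is routine but must be done with care.

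The main obstacle is the use of Ax--Schanuel in the first step of the dichotomy: producing \emph{some} weakly special subvariety through the accumulating atypical loci is easy, but one must control its algebraic monodromy sharply enough to decide between ``the loci collapse into a single atypical special subvariety'' (case~(a)) and ``they are the atypical leaves of a genuine fibration by a Mumford--Tate factor'' (case~(b)). Equivalently, one must prove that the only obstruction to the atypical Hodge locus of positive period dimension being a finite union of atypical special subvarieties is the presence of a proper factor $\LL_Z$ of the Mumford--Tate group over which $Z$ fibers --- which is exactly the content of the theorem. For the complete argument we refer to \cite[Sections~6 and~10]{2021arXiv210708838B}; see also \cite{2023arXiv231211246B}.
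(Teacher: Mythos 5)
The paper does not prove this statement at all: it is recalled verbatim from \cite[Thm.~6.1 and Thm.~10.1]{2021arXiv210708838B} (with the Shimura case credited to \cite{MR3867286}), which is exactly the source you cite and defer to. Your sketch of that argument --- Cattani--Deligne--Kaplan algebraicity with o-minimal definability, Andr\'e--Deligne normality of the algebraic monodromy giving the decomposition $\G_Z^\ad=\HH_Z^\ad\times\LL_Z$, and Bakker--Tsimerman Ax--Schanuel driving the dichotomy between (a) and (b) --- is consistent with that reference, so your proposal matches the paper's treatment.
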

We refer also to \cite[Thm. 7.1]{2024arXiv240616628B} for an effective proof of a more general statement (proved using differential geometry rather than $o$-minimality).
\begin{theor}\label{typicallocus}
If the typical Hodge locus $\HL(S,\VV^\otimes)_{\pos,\typ} $ is nonempty then $\HL(S,\VV^\otimes)_{\pos,\typ}$ is analytically dense in $S$.
\end{theor}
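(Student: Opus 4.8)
The statement is Theorem~10.1 together with Remark~10.2 of \cite{2021arXiv210708838B}, and the plan is to reproduce that argument, which is essentially a ``transport of transversality'' in the period domain. Fix a typical special subvariety $Z_0\subset S$ of positive period dimension, with generic Hodge datum $(\G_0,D_0)\subset(\G,D)$, and pass to the universal cover $\widetilde S\to S$ together with the lift $\widetilde\Phi\colon\widetilde S\to D$. Typicality of $Z_0$ says precisely that $\widetilde\Phi(\widetilde S)$ meets some translate of $D_0$ transversally, in the expected positive dimension, at a generic point of the corresponding component; in particular $\dim\widetilde\Phi(\widetilde S)+\dim D_0\ge\dim D$. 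First I would record the standard reductions: replacing $S$ by a finite \'etale cover and reordering the factors in the decomposition \eqref{period}, and then --- using that the typical Hodge locus of a product variation is governed by the typical loci of its simple factors --- reducing to the case where the generic adjoint Mumford--Tate group $\G^\ad$ is $\QQ$-simple.

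The engine is the family of translates $\{gD_0:g\in\G(\RR)^+\}$. Since $\G(\RR)^+$ acts transitively on $D$, every point of $D$, in particular every point of $\widetilde\Phi(\widetilde S)$, lies on some translate; and for $g\in\G(\QQ)^+$ the translate $gD_0$ descends to a genuine Hodge subvariety of $\Gamma\backslash D$ (its $\QQ$-group being $g\G_0g^{-1}$), so the analytic components of $\widetilde\Phi^{-1}(gD_0)$ push down to special subvarieties of $S$. Two facts drive the proof. First, the incidence variety $\mathcal I=\{(g,x)\in\G(\RR)^+\times\widetilde\Phi(\widetilde S):x\in gD_0\}$ submerses onto $\widetilde\Phi(\widetilde S)$ via $(g,x)\mapsto x$: this is immediate from transitivity, since the orbit map $\mathfrak g\to T_xD$ is onto and hence so is $\mathfrak g\to T_xD/T_x(gD_0)$, which is exactly the surjectivity one needs. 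It follows that the set $W\subset\widetilde\Phi(\widetilde S)$ of points lying on some translate transverse to $\widetilde\Phi(\widetilde S)$ is open, and it is $\Gamma$-invariant since the monodromy preserves $\widetilde\Phi(\widetilde S)$ and conjugating a translate by $\gamma\in\Gamma$ preserves transversality. Second, typicality is generic in $g$: a Thom--Sard transversality argument applied to the submersion $\G(\RR)^+\times D_0\to D$ shows that outside a negligible set of $g$, the translate $gD_0$ meets $\widetilde\Phi(\widetilde S)$ transversally wherever it meets it at all; here ``negligible'' must be upgraded from ``measure zero'' to ``nowhere dense'', which is available because the period map is definable in an o-minimal structure, so that $W$ and the relevant exceptional loci are definable (hence their frontiers are lower dimensional). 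Combining the two facts, the set of admissible $g$ contains a non-empty open $\Omega\subset\G(\RR)^+$, and since $\G(\QQ)$ is dense in $\G(\RR)$ and transversality is stable under small perturbation, $\Omega\cap\G(\QQ)^+$ is dense in $\Omega$ with the corresponding rational translates still meeting $\widetilde\Phi(\widetilde S)$ typically. Pulling back and descending, $\HL(S,\VV^\otimes)_{\pos,\typ}$ is analytically dense in a non-empty open subset $\cU\subset S$.

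Finally I would bootstrap from $\cU$ to all of $S$. Each typical special subvariety of positive period dimension produced above is itself an admissible seed, so re-running the argument with any such $Z'$ fills a neighbourhood of $Z'$; letting $\cU^\ast$ denote the union of all typical special subvarieties of positive period dimension of the fixed type, one checks that $\cU^\ast$ is open and --- via a frontier argument that again leans on o-minimal definability --- also closed in $S$. Since $S$ is connected, $\cU^\ast=S$, and hence $\HL(S,\VV^\otimes)_{\pos,\typ}$ is analytically dense in $S$, as claimed. The main obstacle is the combination of the transversality-genericity step and the passage from an open set to all of $S$: both require the period image $\widetilde\Phi(\widetilde S)$ to be non-degenerate in a topological rather than a merely measure-theoretic sense, and it is precisely here that the special structure of a period map --- its horizontality and, crucially, its o-minimal definability --- is needed, since a purely differential-topological argument controls these exceptional sets only up to measure zero.
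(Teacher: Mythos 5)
Your first step---perturbing the Mumford--Tate subdomain $D_0$ of a given typical special subvariety by elements of $\G(\RR)^+$ close to a given point, and then approximating by $\G(\QQ)^+$ to get genuine special subvarieties through every open subset of $S$---is indeed the engine of the argument in the cited reference, and it is correctly aimed. But what this perturbation argument yields is the analytic density of $\HL(S,\VV^\otimes)_{\pos}$, not of $\HL(S,\VV^\otimes)_{\pos,\typ}$, and there is a genuine gap at the point where you pass from ``$gD_0$ meets $\widetilde\Phi(\widetilde S)$ transversally'' to ``the resulting special subvariety is typical.'' Typicality in the sense of \Cref{atypical} is computed with respect to the \emph{special closure} of the component, i.e.\ its own generic Hodge datum $(\G',D')$, which may be strictly smaller than $(g\G_0 g^{-1}, gD_0)$; see \Cref{rmkonnormalizations}. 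A component of $\Phi^{-1}(gD_0)$ that has the expected dimension relative to $gD_0$ can perfectly well be atypical once its codimension is measured against the smaller $\Gamma'\backslash D'$, so transversality of the ambient intersection does not place it in the typical Hodge locus.

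The missing ingredient is precisely \Cref{geometricZP}: one first proves that $\HL(S,\VV^\otimes)_{\pos}$ is analytically dense (your perturbation step), and then invokes the geometric Zilber--Pink theorem to conclude that $\HL(S,\VV^\otimes)_{\pos,\atyp}$ is algebraic, i.e.\ contained in a finite union of strict closed subvarieties of $S$, hence nowhere dense. Subtracting this nowhere dense closed set from the dense locus $\HL(S,\VV^\otimes)_{\pos}$ forces $\HL(S,\VV^\otimes)_{\pos,\typ}$ to be analytically dense. This two-step structure is exactly what is meant by the sentence following the theorem, that the statement ``already hides an application of \Cref{geometricZP},'' and it is the content of Remark~10.2 of the reference you cite. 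As written, your proof never uses geometric Zilber--Pink, and your concluding open--closed bootstrap does not repair the issue, since it propagates the same unjustified typicality claim; no purely transversality-theoretic or o-minimality argument is known to rule out that all the special subvarieties produced by the perturbation are atypical because their Mumford--Tate groups degenerate.
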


The above statement already hides an application of \Cref{geometricZP}, as explained in \cite[Rmk.\ 10.2]{2021arXiv210708838B}. Indeed it is proved by first showing that $\HL(S,\VV^\otimes)_{\pos}$ in dense then invoking \Cref{geometricZP} to say that $\HL(S,\VV^\otimes)_{\pos,\atyp}$ is algebraic, and therefore is exactly $\HL(S,\VV^\otimes)_{\pos,\typ}$, which is dense. In future applications we will actually use this finer version. See also \cite[Thm.\ 1.6 and Rmk.\ 1.7]{2023arXiv230316179K} for a related discussion.

\begin{rmk}
Hodge theory actually gives a simple combinatorial criterion to decide whether $\HL(S,\VV^\otimes)_{\typ}$ is empty or not. Indeed see the recent work \cite{2022arXiv221110592E, 2023arXiv230316179K}.
\end{rmk}

\section{General Hodge theoretic statement: from typical to atypical intersections}\label{sectionhodge}

\subsection{Isogenies between VHSs}

For an introduction to Tannakian categories, we refer for example to the article of Deligne and Milne \cite{zbMATH03728195}. The category of $\Q$VHS on a smooth quasi-projective base $S$, which we denote by $\Q VHS/S$, is Tannakian (we can fix a fiber functor $\omega_s$ corresponding to some base point $s\in S$). Given a $\Z VHS$ $\VV$, we denote the associated $\Q VHS$ by $\VV_\Q$.

\begin{defi}\label{def:isog}
Let $\mathbb{V}_1, \VV_2\in \Z VHS/S$. We say that $\VV_1$ and $\VV_2$ are \emph{isogenous} if there is an equivalence of tensor categories $\langle \VV_{1,\Q} \rangle^\otimes \cong \langle \VV_{2,\Q} \rangle^\otimes$, where $ \langle \VV_{i,\Q} \rangle^\otimes$ denotes the smallest Tannakian subcategory of $\Q VHS$'s containing $\VV_{i,\Q}$.
\end{defi}
Of course the Tannakian categories $\langle \VV_{i,\Q} \rangle^\otimes$ appearing above are equivalent (as tensor categories) to the category of finite dimensional representations of their generic Mumford-Tate group. (The equivalence is realized by the functor of $\otimes$-automorphisms of the fiber functor). It can happen that two VHS have isomorphic Mumford-Tate groups, but the isomorphism does not induce an equivalence of tenor categories. C.f. Def. 1.10 and Thm. 2.11 in the article of Deligne and Milne\cite{zbMATH03728195}.
\begin{rmk}\label{rmkonabelian}
If two complex principally polarized abelian varieties $A,B$ that are isogenous in the usual sense (either via a polarized or an unpolarized isogeny), then the Hodge structures $H^1(A,\Z), H^1(B,\Z)$ also isogenous in the sense of \Cref{def:isog} (here we take as base $S$ the spectrum of $\C$). However the converse is in general not true, for example $H^1(A,\Z)$ is isogenous to $H^1(A\times A, \Z)$. However, for two principally polarized $g$-dimensional abelian varieties with Mumford--Tate group $\mathbf{GSp}_{2g}$ the two notions agree, since this is the case when the Mumford--Tate group is as big as possible.
\end{rmk}

Note that isogeny is an equivalence relation which we denote by $\VV_1\simeq \VV_2$. Let $\Gamma_1\backslash D_1$ and $\Gamma_2\backslash D_2$ be two Mumford--Tate Domains. A \emph{modular correspondence} between $\Gamma_1\backslash D_1$ and $\Gamma_2\backslash D_2$ is a subvariety 
\[
V\subset \Gamma_1\backslash D_1\times \Gamma_2\backslash D_2
\]
such that the two projections $p_1: V\rightarrow \Gamma_1\backslash D_1$ and $p_2: V\rightarrow \Gamma_2\backslash D_2$ are finite \'etale surjective maps (see also \cite[Def. 3.23]{2021arXiv210708838B} for more details). In this situation $V$ is a Mumford--Tate domain, that is $V=\Gamma\backslash D$, we have that $D\simeq D_1\simeq D_2$ and that $\Gamma_1$ is commensurable with $\Gamma_2$. The following proposition translates the isogeny relation from a Tannakian statement to a more useful one on period maps.

\begin{prop}\label{charisog}
Let $\VV_1, \VV_2$ be $\Z VHS$ on $S$ with associated Hodge data $(\G_i,D_i)$ and period map $\psi _i$.
The following are equivalent:
\begin{enumerate}
\item There is an isogeny between $\VV_1$ and $\VV_2$;
\item The Hodge data $(\G_1,D_1)$, $(\G_2,D_2)$ are isomorphic. Using $(\G,D)$ for the corresponding Hodge datum and $\psi_1: S\to \Gamma_1 \backslash D$, $\psi_2: S\to \Gamma_2 \backslash D$ for the period maps associated with $\VV_1$, $\VV_2$ (respectively), then there exists a modular correspondence 
\[
\Gamma\backslash D\subset \Gamma_1\backslash D \times \Gamma_2\backslash D
\]
and a period map $\psi_0: S\to \Gamma\backslash D$ such that if $p_i: \Gamma\backslash D \to \Gamma_i\backslash D $ are the natural projections, then $\psi_i= p_i \circ \psi_0$ for $i\in\{1,2\}$.
\end{enumerate}
\end{prop}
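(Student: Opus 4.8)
\textbf{Proof plan for Proposition \ref{charisog}.}
The plan is to unwind \Cref{def:isog} using the Tannakian formalism: an isogeny between $\VV_1$ and $\VV_2$ is an equivalence $\langle \VV_{1,\Q}\rangle^\otimes \cong \langle \VV_{2,\Q}\rangle^\otimes$ of neutral Tannakian categories compatible with the underlying local system (equivalently with the fiber functor $\omega_s$ at a base point $s$). By Tannakian duality such an equivalence is the same datum as an isomorphism of the corresponding fundamental (pro-algebraic) groups, but here the relevant invariant is finite-dimensional: the Tannakian group of $\langle \VV_{i,\Q}\rangle^\otimes$ at $s$ is precisely the generic Mumford--Tate group $\G_i$ (acting on $\omega_s(\VV_i)$), and the whole category is recovered from $\G_i$ together with the Hodge structure, i.e.\ from the Hodge datum $(\G_i,D_i)$ (after passing to a connected component / finite cover, which is harmless since the definition already allows finite \'etale covers of $S$ and commensurable lattices). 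So the first step is: $(1)\Leftrightarrow$ the Hodge data $(\G_1,D_1)$ and $(\G_2,D_2)$ are isomorphic \emph{and} this isomorphism is induced by one defined over $S$ — meaning it is compatible with the two monodromy representations, hence with the two period maps. This last compatibility is exactly what a modular correspondence encodes.

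The implication $(2)\Rightarrow(1)$ is the easier direction: given $(\G,D)$, the modular correspondence $\Gamma\backslash D\subset \Gamma_1\backslash D\times\Gamma_2\backslash D$ with $\psi_i=p_i\circ\psi_0$, one checks directly that $\psi_0$ underlies a $\ZZ$VHS $\VV_0$ on (a finite \'etale cover of) $S$ with generic Hodge datum $(\G,D)$, and that the two finite \'etale projections $p_i$ give, by pullback of the tautological variations, isomorphisms $\langle\VV_{i,\Q}\rangle^\otimes\cong\langle\VV_{0,\Q}\rangle^\otimes$ — here one uses that $D\simeq D_1\simeq D_2$ and $\Gamma_1,\Gamma_2$ commensurable, as recorded after the definition of a modular correspondence, so that the Tannakian categories generated by $\VV_1$, $\VV_2$, $\VV_0$ all coincide up to these correspondences. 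Composing, $\langle\VV_{1,\Q}\rangle^\otimes\cong\langle\VV_{2,\Q}\rangle^\otimes$, which is $(1)$.

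For $(1)\Rightarrow(2)$: from an isogeny, restrict the Tannakian equivalence to the fiber at $s$ to get an isomorphism $\theta:\G_1\xrightarrow{\sim}\G_2$ of the generic Mumford--Tate groups identifying the Hodge cocharacters, hence an isomorphism of Hodge data $(\G,D):=(\G_1,D_1)\xrightarrow{\sim}(\G_2,D_2)$. The equivalence of categories is moreover a map of local systems, so $\theta$ intertwines the monodromy representations $\pi_1(S,s)\to\Gamma_i$ up to the commensurability ambiguity; concretely, after replacing $S$ by a finite \'etale cover (so that both monodromies land in fixed arithmetic lattices) the two period maps $\psi_1,\psi_2$ become two liftings of ``the same'' map to $\Gamma\backslash D$, and one produces $\psi_0:S\to\Gamma\backslash D$ together with the finite \'etale projections $p_1,p_2$ by taking $\Gamma:=\Gamma_1\cap\theta^{-1}(\Gamma_2)$ (a finite-index subgroup of both, using commensurability) and $V:=\Gamma\backslash D\hookrightarrow\Gamma_1\backslash D\times\Gamma_2\backslash D$ via $(\mathrm{id},\theta)$; this is a modular correspondence by construction, and $\psi_0$ is the common lift, so $\psi_i=p_i\circ\psi_0$. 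I expect the main obstacle to be the bookkeeping around finite \'etale covers and commensurability — making precise that an abstract Tannakian equivalence (only well-defined on isomorphism classes, hence on $\pi_1$ up to finite ambiguity) descends to an honest morphism of period maps rather than just of their images, and checking that the resulting $V$ is genuinely finite \'etale over \emph{both} factors (not merely generically finite). All the Hodge-theoretic input needed for this is standard and is already implicit in the properties of modular correspondences recalled just before the statement, so once the covering issues are handled the rest is formal.
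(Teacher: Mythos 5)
Your proposal is correct but the route for $(1)\Rightarrow(2)$ is genuinely different from the paper's, and the paper's is slicker precisely where you expect trouble. You extract an explicit isomorphism $\theta:\G_1\xrightarrow{\sim}\G_2$ of Mumford--Tate groups from the Tannakian equivalence and then hand-build the modular correspondence as $(\mathrm{id},\theta):\Gamma\backslash D\hookrightarrow\Gamma_1\backslash D\times\Gamma_2\backslash D$ with $\Gamma=\Gamma_1\cap\theta^{-1}(\Gamma_2)$; this works (commensurability follows from $\theta$ being a $\Q$-isomorphism), but as you say it forces you into bookkeeping about finite \'etale covers and about upgrading a group-level intertwining to an identity of lifted period maps. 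The paper sidesteps all of this by looking at the $\ZZ$VHS $\VV_1\oplus\VV_2$: since $\langle\VV_{1,\Q}\rangle^\otimes=\langle\VV_{2,\Q}\rangle^\otimes=\langle\VV_{1,\Q}\oplus\VV_{2,\Q}\rangle^\otimes$, the generic Mumford--Tate group of the sum is again $\G$, and its period map $\psi_0:S\to\Gamma\backslash D\subset\Gamma_1\backslash D_1\times\Gamma_2\backslash D_2$ \emph{is} the modular correspondence, with $\psi_i=p_i\circ\psi_0$ automatic by functoriality; the isomorphism $D\simeq D_1\simeq D_2$ then drops out because the diagonal embedding $\G\hookrightarrow\G\times\G$ sends the Hodge cocharacter $\alpha$ to $(\alpha_1,\alpha_2)$. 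Your $(2)\Rightarrow(1)$ direction (pull back the tautological variations along $p_1,p_2$ to identify the two Tannakian subcategories) is the same in substance as the paper's one-line observation that $\langle\VV_\Q\rangle^\otimes$ is determined by the Hodge datum together with the period map. One small remark worth keeping in mind: both your argument and the paper's implicitly read Definition \ref{def:isog} as an equality of Tannakian subcategories of $\Q\mathrm{VHS}/S$ (i.e.\ the equivalence commutes with the fiber functor $\omega_s$), not merely as an abstract equivalence of neutral Tannakian categories; you flag this when you insist the equivalence be "a map of local systems", and without it neither $\MT(\VV_1\oplus\VV_2)=\G$ nor your intertwining of monodromies would follow.
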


\begin{proof}
Let $\VV$ be a $\ZZ$VHS on $S$ with generic Hodge datum $(\G,D)$. Then the Tannakian category generated by $\VV_{\QQ}$ is determined by $(\G,D)$, as any $\WW$ in $ \langle \VV_{\Q} \rangle^\otimes$ is, by definition of a Tannakian category, given by a representation of $\G$ on the fibre $W_{\QQ}$ of $\WW_{\QQ}$ at a point $s\in S$ and the Hodge structure on $W_{\QQ}$ is determined by some Hodge morphism $\alpha: \bS\rightarrow \G$ belonging to $D$. Therefore if $(\G_1,D_1)\cong (\G_2,D_2)$, then $\VV_1, \VV_2$ are isogenous.

If $\VV_1, \VV_2$ are isogenous, they have isomorphic generic Mumford--Tate groups, denoted by $\G$, under the interpretation of the Mumford--Tate group in terms of Tannakian categories recalled above. Thus
\begin{displaymath}
 \langle \VV_{1,\Q} \rangle^\otimes\cong \langle \VV_{2,\Q} \rangle^\otimes \cong \langle \VV_{1,\Q} \oplus \VV_{2,\Q} \rangle^\otimes
\end{displaymath}
and therefore the generic Mumford--Tate group of $\VV_1\oplus \VV_2$ is also $\G$.

The $\ZZ$VHS $\VV_1\oplus\VV_2$ corresponds to a period map
\[
\psi_0: S\longrightarrow \Gamma\backslash D\subset \Gamma_1\backslash D_1 \times \Gamma_2\backslash D_2,
\]
where $(\G,D)$ is the generic Hodge subdatum associated with $\VV_1\oplus\VV_2$. Moreover, if the natural projections are $p_i: \Gamma\backslash D \to \Gamma_i\backslash D_i$, we have $\psi_i= p_i \circ \psi_0$. Let $\alpha_i:\bS\to \G$ be Hodge morphisms such that $D_i=\G(\RR)^+\alpha_i$. Then there exists an embedding of $\G$ in $\G \times \G$ such that $D$ is the $\G(\RR)^+$-orbit of $\alpha=(\alpha_1,\alpha_2)$. Therefore $D\simeq D_1\simeq D_2$.
\end{proof}

\subsection{The main statement}

\begin{theor}\label{hodgethm}
Let $S$ be a smooth quasi-projective variety and $\mathbb{V}_1, \mathbb{V}_2$ two pure polarized $\Z$VHSs on $S$. Assume that the generic Mumford--Tate groups of $\VV_1$ and $\VV_2$ are $\QQ$-simple.
If 
\[
\HL(S,\mathbb{V}_1^\otimes)_{\pos, \typ}=\HL(S,\mathbb{V}_2^\otimes)_{\pos, \typ}\neq \emptyset,
\]
 then $\mathbb{V}_1$ is isogenous to $\mathbb{V}_2$. As a consequence, $\HL(S,\mathbb{V}_1^\otimes)=\HL(S,\mathbb{V}_2^\otimes)$.
\end{theor}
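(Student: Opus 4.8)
The plan is to reconstruct each VHS from its typical Hodge locus of positive period dimension by first passing to a product setting and then applying the structural results of \cite{2021arXiv210708838B}, in particular \Cref{geometricZP} and \Cref{typicallocus}.

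First I would form the direct sum $\WW \coloneqq \VV_1 \oplus \VV_2$ on $S$, with generic Hodge datum $(\G_\WW, D_\WW)$ and period map $\psi_\WW = (\psi_1, \psi_2) : S^\an \to \Gamma_\WW \backslash D_\WW \subset \Gamma_1 \backslash D_1 \times \Gamma_2 \backslash D_2$ (after replacing $S$ by a finite \'etale cover so that the lattice splits, as in the discussion preceding \Cref{positive period dimension}). Since $\G_{\VV_1}$ and $\G_{\VV_2}$ are $\QQ$-simple and nontrivial, the adjoint group $\G_\WW^\ad$ is either $\QQ$-simple (if $\VV_1$ and $\VV_2$ are already isogenous, in which case we are done) or a product of exactly two $\QQ$-simple factors, which we may identify with $\G_{\VV_1}^\ad \times \G_{\VV_2}^\ad$ via the two projections. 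In the latter case, the key point is to study the atypical locus of $\WW$: each special subvariety $Z$ of $S$ that is special for \emph{both} $\VV_1$ and $\VV_2$ with positive period dimension under each, and typical for each separately, becomes \emph{atypical} for $\WW$ — this is the standard codimension computation (as in the lemma inside the proof of \Cref{thm11}), because the product datum forces the defining sub-datum of $\WW$ to be a ``diagonal'' type sub-datum and hence the intersection is deficient. So the hypothesis $\HL(S,\VV_1^\otimes)_{\pos,\typ} = \HL(S,\VV_2^\otimes)_{\pos,\typ} \neq \emptyset$ produces a nonempty — and, by \Cref{typicallocus} applied to each $\VV_i$, analytically dense — set of positive-period-dimensional atypical special subvarieties of $S$ for $\WW$.

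Now I would apply \Cref{geometricZP} to $\WW$. Let $Z$ be an irreducible component of the Zariski closure of the union of these atypical special subvarieties; by density this Zariski closure is all of $S$, so $Z = S$. Alternative (a) of \Cref{geometricZP} — that $Z$ is a maximal atypical special subvariety — is impossible since $Z = S$ and the atypical subvarieties are strict. Hence alternative (b) holds: $\G_S^\ad = \G_\WW^\ad$ decomposes as a nontrivial product $\HH \times \LL$ and $S$ contains a Zariski-dense set of fibers of $\Phi_\LL$ which are atypical weakly special subvarieties. The only nontrivial product decomposition available is $\G_{\VV_1}^\ad \times \G_{\VV_2}^\ad$ (up to swapping), and one checks that the fibers of $\Phi_\LL$ being weakly special and atypical in the required sense forces the two period maps $\psi_1$ and $\psi_2$ to factor through a common quotient: concretely, the dense family of weakly special fibers of (say) $\Phi_{\LL} = \psi_2$-direction forces $\psi_1$ to be constant along the fibers of $\psi_2$ and conversely, so the two period maps have the same fibers. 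This yields a correspondence $\Gamma_\WW \backslash D_\WW \subset \Gamma_1 \backslash D_1 \times \Gamma_2 \backslash D_2$ whose two projections are generically finite and surjective onto the images; combined with the matching of generic Hodge data forced by the factor identification, this gives a modular correspondence in the sense preceding \Cref{charisog}, and then \Cref{charisog}(2)$\Rightarrow$(1) gives $\VV_1 \simeq \VV_2$. The final consequence $\HL(S,\VV_1^\otimes) = \HL(S,\VV_2^\otimes)$ is then immediate, since isogenous VHS generate the same Tannakian category and hence have literally the same special subvarieties.

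The main obstacle I expect is the precise extraction of the isogeny from alternative (b) of \Cref{geometricZP}: one must rule out the ``degenerate'' possibility that the dense family of atypical weakly special fibers witnesses only a partial coincidence — e.g. that $\VV_1$ and $\VV_2$ share a common \emph{factor} of their Hodge data rather than being fully isogenous — and this is exactly where the $\QQ$-simplicity hypothesis on both generic Mumford--Tate groups is essential, since it prevents any intermediate factorization and forces the product $\G_\WW^\ad = \G_{\VV_1}^\ad \times \G_{\VV_2}^\ad$ to have no finer nontrivial decomposition. Making the bookkeeping of which factor plays the role of $\HH$ versus $\LL$ in \Cref{geometricZP}, and verifying that the weakly special fibers on \emph{both} sides (coming from the symmetric roles of $\VV_1$ and $\VV_2$) together pin down the graph of an isomorphism of Hodge varieties up to a modular correspondence, will require care but no new ideas beyond those in \cite{2021arXiv210708838B}.
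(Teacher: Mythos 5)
Your overall strategy — form $\WW = \VV_1\oplus\VV_2$, establish Zariski density of the atypical Hodge locus of positive period dimension for $\WW$ (via \Cref{typicallocus} and the codimension bookkeeping from the proof of \Cref{thm11}), apply \Cref{geometricZP}, extract a modular correspondence, and conclude by \Cref{charisog} — coincides with the paper's, and the density step (the paper's \Cref{lalala}) is essentially what you describe. Where you diverge is in the dichotomy of \Cref{geometricZP}, and this is where the argument breaks.

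You rule out alternative (a) on the grounds that $Z=S$ cannot itself be an atypical special subvariety, and then try to extract the modular correspondence from alternative (b). The paper does the opposite: it reads (a) as saying precisely that $f_1\times f_2(S)$ is not Hodge generic in $\Gamma_1\backslash D_1\times\Gamma_2\backslash D_2$ — which, once one knows the proper special closure has surjective $\QQ$-simple projections, is exactly a modular correspondence — and rules out (b) using the assumption that the monodromy of each $\VV_i$ equals the derived Mumford--Tate group: with that hypothesis, every factor projection of $f_1\times f_2(S)$ is positive-dimensional and Hodge generic, so the "product structure with dense atypical weakly special fibers of $\Phi_\LL$" described in (b) cannot occur. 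Your attempt to use (b) instead founders on the unjustified step "the dense family of weakly special fibers of $\Phi_\LL$ forces $\psi_1$ to be constant along the fibers of $\psi_2$": a fiber of $\psi_2$ being weakly special for $(\psi_1,\psi_2)$ does not mean $\psi_1$ is constant on it (that would require it to be a fiber of the full period map), and the codimension condition defining atypicality does not give you that either. In short, (b) is the case the proof must \emph{exclude}, not the case that supplies the isogeny; the isogeny is produced from (a) together with the $\QQ$-simplicity of the two Mumford--Tate groups forcing any proper Hodge-generic special subvariety of $\Gamma_1\backslash D_1\times\Gamma_2\backslash D_2$ with surjective projections to be a modular correspondence.
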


Some stronger variants with the \emph{weakly special Hodge locus} replacing the Hodge locus can be given. Also, the same proof works if there exits a Zariski dense subset of components of $\HL(S,\mathbb{V}_1^\otimes)_{\pos, \typ}$ that is contained in $\HL(S,\mathbb{V}_2^\otimes)_{\pos, \typ}$.

\begin{rmk}
By \Cref{typicallocus}, the condition $ \HL(S,\mathbb{V}_i^\otimes)_{\pos, \typ}\neq \emptyset$ implies that $\HL(S,\mathbb{V}_i^\otimes)_{\pos, \typ}$ is analytically dense in $S$. \Cref{conj-typical} predicts that $ \HL(S,\mathbb{V}_i^\otimes)_{\typ}$ should be analytically dense whenever it is nonempty. The condition $\HL(S,\mathbb{V}_1^\otimes)_{\pos, \typ}=\HL(S,\mathbb{V}_2^\otimes)_{\pos, \typ}\neq \emptyset$ is therefore quite restrictive. Moreover by \cite[Thm. 1.5]{2021arXiv210708838B}, the level of the $\ZZ$VHS $\VV_1$ and $\VV_2$ must be $1$ (the Shimura case) or $2$.
\end{rmk}

To better explain \Cref{hodgethm}, we give a simple and more concrete corollary about two principally polarized families of abelian varieties $h_i:\mathcal{A}_i \to S$.

\begin{cor}
Fix $g \geq 2$. Let $S$ be a smooth quasi projective variety and $h_i:\mathcal{A}_i \to S$ be two principally polarized $g$-dimensional families of abelian varieties whose monodromy group is $\mathbf{Sp}_{2g}$. Set $\VV_i\coloneqq{R^1h_{i ,*}\Z}$ for the associated VHS. If
\begin{equation}
\HL(S,\VV_1^\otimes)_{\pos, \typ}=\HL(S,\VV_2^\otimes)_{\pos, \typ}\neq \emptyset,
\end{equation}
then $\mathcal{A}_1$ is isogenous to $\mathcal{A}_2$ as principally polarized $S$-abelian schemes.
\end{cor}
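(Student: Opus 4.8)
The plan is to first apply \Cref{hodgethm} to produce a Tannakian isogeny between the two variations, and then to upgrade it to a genuine isogeny of principally polarized abelian schemes by exploiting that the Mumford--Tate group is as large as possible, in the spirit of \Cref{rmkonabelian}.

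For the first step, one observes that $\VV_i=R^1h_{i,*}\Z$ is a polarizable pure $\Z$VHS of weight one whose algebraic monodromy group is $\mathbf{Sp}_{2g}$; since $\mathcal A_i$ is principally polarized this forces its generic Mumford--Tate group to be $\mathbf{GSp}_{2g}$, so the algebraic monodromy group $\mathbf{Sp}_{2g}$ is $\QQ$-simple and nontrivial (equivalently, the adjoint Mumford--Tate group $\mathbf{PGSp}_{2g}$ is $\QQ$-simple). Hence \Cref{hodgethm} applies, and the hypothesis $\HL(S,\VV_1^\otimes)_{\pos,\typ}=\HL(S,\VV_2^\otimes)_{\pos,\typ}\neq\emptyset$ yields that $\VV_1$ and $\VV_2$ are isogenous in the sense of \Cref{def:isog}. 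By the proof of \Cref{charisog} this gives $\langle\VV_{1,\Q}\rangle^\otimes=\langle\VV_{2,\Q}\rangle^\otimes=\langle\VV_{1,\Q}\oplus\VV_{2,\Q}\rangle^\otimes$, so the generic Mumford--Tate group $\G$ of $\VV_1\oplus\VV_2$ is again a copy of $\mathbf{GSp}_{2g}$ mapping isomorphically onto each factor of $\mathbf{GSp}_{2g}\times\mathbf{GSp}_{2g}$, and each $\VV_{i,\Q}$ is the representation of $\G$ given by the $i$-th projection, equipped with the Hodge structure coming from the common period map.

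The second step I would carry out by representation theory of $\mathbf{Sp}_{2g}$. Each projection $\G\to\mathbf{GSp}_{2g}$ is a $2g$-dimensional faithful weight-one representation, and since the only faithful irreducible representation of $\mathbf{Sp}_{2g}$ of dimension at most $2g$ is the standard one (faithfulness excluding trivial summands), both projections are isomorphic to the standard representation of $\mathbf{GSp}_{2g}$. Fixing such isomorphisms and composing produces a $\G$-equivariant isomorphism $\VV_{1,s}\xrightarrow{\sim}\VV_{2,s}$ at a base point $s$; since $\G$ contains the Zariski closure $\mathbf{Sp}_{2g}$ of the monodromy, this isomorphism is monodromy-invariant and hence extends to an isomorphism of local systems, which respects the Hodge filtrations because it is $\G$-equivariant. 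Thus $\VV_{1,\Q}\cong\VV_{2,\Q}$ as $\Q$VHS on $S$, and this isomorphism identifies the two polarizations up to a nonzero rational scalar, the space of $\G$-invariant symplectic forms on the standard representation being one-dimensional. Finally, the standard equivalence between polarizable weight-one $\Z$VHS on $S$ and polarized abelian schemes over $S$ turns this into a quasi-isogeny $\mathcal A_1\to\mathcal A_2$ respecting the principal polarizations up to scalar; clearing denominators produces the desired isogeny of principally polarized $S$-abelian schemes.

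The one deep ingredient is \Cref{hodgethm}, which rests on the geometric Zilber--Pink statement \Cref{geometricZP}. Granting that, the only remaining obstacle is to pass from the categorical notion of isogeny in \Cref{def:isog} to a geometric isogeny of abelian schemes, and here the crucial (and essentially only) point is the uniqueness of the standard representation of $\mathbf{Sp}_{2g}$ in low dimension, which is exactly why the two notions of isogeny coincide when the Mumford--Tate group is maximal, as recorded in \Cref{rmkonabelian}.
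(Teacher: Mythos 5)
Your proposal is correct and follows essentially the same route as the paper: apply \Cref{hodgethm} to get a Tannakian isogeny, then use maximality of the Mumford--Tate group together with the equivalence between weight-one polarizable $\Z$VHS and abelian schemes (the paper cites Deligne, \emph{Th\'eorie de Hodge II}, Cor.\ 4.4.15, i.e.\ $\Hom_S(\mathcal{A}_1,\mathcal{A}_2)=\Hom(R^1h_{1,*}\Z,R^1h_{2,*}\Z)$) to upgrade it to a geometric isogeny. The only difference is that you spell out, via the uniqueness of the standard representation of $\mathbf{Sp}_{2g}$, the step the paper delegates to \Cref{rmkonabelian}.
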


\begin{proof}
We will use \Cref{rmkonabelian} and the following fact: Given two principally polarized families of abelian varieties $h_i:\mathcal{A}_i \to S$, one has $\Hom_S (\mathcal{A}_1, \mathcal{A}_2)= \Hom (R_1{h_1,_*}\Z,R_1{h_2,_*}\Z)$, where the second $\Hom$ is in the category of $\Z$VHS; see \cite[Cor.\ 4.4.15]{MR0498551}. Indeed \Cref{hodgethm} implies that the two $\Z$VHSs $\VV_i$ for $i\in\{1,2\}$
are isogenous in the sense of \Cref{def:isog}, and the above facts imply that this notion of isogeny translates precisely to the traditional one.
\end{proof}

\begin{rmk}The above statement bears some similarities with work of Baker and DeMarco \cite{zbMATH05941080} in complex dynamics, which shows that for any fixed $a,b \in \C$ and any integer $d \geq 0$, the set of $c\in \C$ for which both $a$ and $b$ are preperiodic for $z^d +c$ is infinite if and only if $a^d=b^d$. Also see generalizations like in the work of Yuan and Zhang \cite{yz}. Both proofs mentioned above use equidistribution but here we work with positive dimensional subvarieties. The statement without \emph{pos} is deeper and would follow from \Cref{main conj}.
\end{rmk}

\begin{proof}[Proof of \Cref{hodgethm}]
The main step in the proof is the following lemma, where we do not need to assume that the generic Mumford--Tate group of $\VV_i$ is $\QQ$-simple.

\begin{lem}\label{lalala}
Assuming the hypothesis of \Cref{hodgethm}, let $f_i: S\to \Gamma_i \backslash D_i$ be the period map associated with $\mathbb{V}_i$, $i=1,2$, and let
\begin{displaymath}
f_1\times f_2: S \longrightarrow \Gamma_1 \backslash D_1 \times \Gamma_2 \backslash D_2
\end{displaymath}
be the period map associated with $\VV_1\oplus \VV_2$. Then $\HL(S,(\VV_1\oplus\VV_2)^{\otimes})_{\pos, \atyp}$ is analytically dense in $S$.
\end{lem}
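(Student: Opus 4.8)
The plan is to exhibit an analytically dense family of special subvarieties of positive period dimension for $\VV_1\oplus\VV_2$ that are \emph{atypical}, obtained by ``superposing'' typical special subvarieties for $\VV_1$ and for $\VV_2$. Write $\Sigma\coloneqq\HL(S,\VV_1^\otimes)_{\pos,\typ}=\HL(S,\VV_2^\otimes)_{\pos,\typ}$.

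\emph{Step 1: a dense family of common typical subvarieties.} By \Cref{typicallocus} the set $\Sigma$ is analytically dense in $S$. Write $\Sigma$ as the (countable) union of the maximal typical special subvarieties for $\VV_1$ it contains, and also as the union of those for $\VV_2$. Any maximal typical-for-$\VV_1$ subvariety $Z$ is irreducible, hence by a Baire category argument it is contained in some maximal typical-for-$\VV_2$ subvariety $W$, which in turn is contained in some maximal typical-for-$\VV_1$ subvariety $Z'$; maximality forces $Z=Z'$, whence $Z=W$. Thus there is an analytically dense family $\{Z_\alpha\}$ of subvarieties of $S$, each of which is simultaneously a typical special subvariety of positive period dimension for $\VV_1$ and for $\VV_2$; let $(\G_i',D_i')$ be the generic Hodge subdatum of $Z_\alpha$ for $\VV_i$.

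\emph{Step 2: each $Z_\alpha$ is special of positive period dimension for $\VV_1\oplus\VV_2$.} As $Z_\alpha$ is an analytic irreducible component of $f_i^{-1}(\Gam_i'\backslash D_i')$ for $i=1,2$, it is an analytic irreducible component of $(f_1\times f_2)^{-1}(\Gam_1'\backslash D_1'\times\Gam_2'\backslash D_2')$, and $\Gam_1'\backslash D_1'\times\Gam_2'\backslash D_2'$ is a Hodge subvariety of $\Gam_1\backslash D_1\times\Gam_2\backslash D_2$. Hence $Z_\alpha$ is special for $\VV_1\oplus\VV_2$, and it has positive period dimension since already $\dim f_1(Z_\alpha)>0$.

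\emph{Step 3: each $Z_\alpha$ is atypical for $\VV_1\oplus\VV_2$.} This is the heart of the argument. Put $N_i=\dim D_i$, $M_i=\dim D_i'$, $p_i=\dim f_i(S)$, $q_i=\dim f_i(Z_\alpha)$, and let $p_{12},q_{12},M_{12}$ be the analogous quantities for $\VV_1\oplus\VV_2$ and its period map $f_1\times f_2$, with codimensions computed in the special closure of $(f_1\times f_2)(Z_\alpha)$ inside $\Gam_1\backslash D_1\times\Gam_2\backslash D_2$ as in \Cref{rmkonnormalizations} (the only exceptional situation, that $\VV_1$ and $\VV_2$ already share a common Tannakian sub-datum, can be treated separately since it yields the conclusion of \Cref{hodgethm} directly). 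Typicality of $Z_\alpha$ for $\VV_i$ reads $q_i=p_i+M_i-N_i$. From $(f_1\times f_2)(S)\subseteq f_1(S)\times f_2(S)$, the closed immersion $D'\hookrightarrow D_1'\times D_2'$, and $q_{12}\ge\max(q_1,q_2)$ one gets $p_{12}+M_{12}-N_1-N_2\le q_1+q_2$; atypicality of $Z_\alpha$ is exactly the \emph{strict} inequality $q_{12}>p_{12}+M_{12}-N_1-N_2$. So the real task is to show that the ``correlation defects'' satisfy $(p_1+p_2-p_{12})+(M_1+M_2-M_{12})>q_1+q_2-q_{12}$, i.e.\ that after imposing the $\VV_1$-condition cutting out $Z_\alpha$ the variation $\VV_2$ still supplies at least one new period direction. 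Equivalently, one rules out that $Z_\alpha$ is typical for $\VV_1\oplus\VV_2$ for all but finitely many $\alpha$: that would force $p_{12}=p_1+p_2$, $D'=D_1'\times D_2'$ and $q_{12}=q_1+q_2$ for a dense family of $Z_\alpha$, so $\VV_1$ and $\VV_2$ would vary independently over $S$ and along each $Z_\alpha$; but global independence is incompatible with $\Sigma$ being nonempty and identical for the two variations, since a typical strict special subvariety for $\VV_1$ would leave $f_2$ essentially unconstrained, contradicting that the \emph{same} subvariety is a typical \emph{strict} special subvariety for $\VV_2$. Hence a dense subfamily of the $Z_\alpha$ lies in $\HL(S,(\VV_1\oplus\VV_2)^\otimes)_{\pos,\atyp}$, which proves the lemma.

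The main obstacle is \emph{Step 3}: establishing the strict inequality uniformly over the dense family, and pinning down the Hodge variety in which codimensions are to be computed (\Cref{rmkonnormalizations}). This is precisely where one must use the \emph{equality} of the two typical loci, not merely the existence of one common typical subvariety, through the incompatibility of ``$\VV_1$ and $\VV_2$ independent over $S$'' with having a dense supply of subvarieties that are typical strict special for \emph{both}. Steps 1 and 2 are essentially formal.
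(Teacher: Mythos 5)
Your Steps 1 and 2 coincide with the paper's proof: there, one extracts a Zariski dense sequence $\{W_\ell\}$ of components common to both typical Hodge loci, writes $W_\ell=f_1^{-1}(Z_{1,\ell})^0=f_2^{-1}(Z_{2,\ell})^0$ with $W_\ell$ Hodge generic in each $Z_{i,\ell}$, and observes that $W_\ell$ is a positive period dimensional special subvariety for $\VV_1\oplus\VV_2$ sitting inside $(f_1\times f_2)^{-1}(Z_{1,\ell}\times Z_{2,\ell})$. Your reduction of atypicality to the ``defect'' inequality $q_1+q_2-q_{12}<(p_1+p_2-p_{12})+(M_1+M_2-M_{12})$ is also exactly the inequality the paper must verify after summing the two typicality equations (and, as you can check, proving atypicality relative to $Z_{1,\ell}\times Z_{2,\ell}$ implies it relative to the possibly smaller special closure of \Cref{rmkonnormalizations}, so that normalization issue is harmless in the direction you need and your parenthetical caveat is unnecessary).

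The genuine gap is in your Step 3, which you flag as the heart of the matter but do not prove. Two distinct problems. First, your dichotomy is too coarse: the negation of atypicality for $Z_\alpha$ is not ``$\VV_1$ and $\VV_2$ vary independently'' (i.e.\ $p_{12}=p_1+p_2$, $q_{12}=q_1+q_2$, $M_{12}=M_1+M_2$), but only that the correlation defect of $Z_\alpha$ is at least that of $S$; the intermediate regime where both defects are positive and equal (say both equal to $1$) is untouched by ruling out exact independence, so even a complete version of your argument would not yield the strict inequality for a dense subfamily. Second, the independence case itself is dispatched only by the phrase ``a typical strict special subvariety for $\VV_1$ would leave $f_2$ essentially unconstrained,'' which is a heuristic, not a proof: it does not engage with the actual geometry of the fibers of $f_1$ over $Z_{1,\ell}$ versus over a generic point. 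For comparison, the paper closes this step very differently (and very briefly): it sums the two typicality equations and asserts that the required strict inequality follows from the two-sided bound $\max\{\dim f_1(Y),\dim f_2(Y)\}\le\dim(f_1\times f_2)(Y)\le\dim f_1(Y)+\dim f_2(Y)$ applied to $Y=S$ and $Y=W_\ell$, with no appeal to an independence dichotomy. Your own bookkeeping shows those bounds give only $0\le q_1+q_2-q_{12}\le\min(q_1,q_2)$ and $0\le p_1+p_2-p_{12}$, so your instinct that the strict inequality requires a further input is worth taking seriously; but as written your proposal does not supply that input, and this step remains unproved.
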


\begin{proof}
As $\HL(S,\VV_i^{\otimes})_{\pos,\typ}\neq \emptyset$, by \Cref{typicallocus} there exists sequences of special subvarieties $\{Z_{i,\ell}\}_{\ell\in \NN}$
 in $\Gamma_i\backslash D_i$ and a sequence $\{W_{\ell}\}_{\ell \in \NN}$ of components of both $\HL(S,\VV_1^{\otimes})_{\pos,\typ}$ and $\HL(S,\VV_2^{\otimes})_{\pos,\typ}$, such that
\begin{displaymath}
W_\ell= f_1^{-1}(Z_{1,\ell})^0=f_2^{-1}(Z_{2,\ell})^0
\end{displaymath}
for some components $f_i^{-1}(Z_{i,\ell})^0$ of $f_i^{-1}(Z_{i,\ell})$ and for which $\cup_\ell W_\ell$ is Zariski dense in $S$. The fact that the $W_\ell$ are again typical intersections, guaranteed by \Cref{typicallocus}, implies that $W_\ell$ is Hodge generic for $\VV_i$ in $Z_{i,\ell}$ (see also the discussion in \Cref{rmkonnormalizations}). By passing to a subsequence, we assume that $Z_{i,\ell}$ has fixed dimension $z_i$.

Since $W_{\ell}$ is realized in two ways as a typical intersection, we have
\begin{displaymath}
\codim_{\Gamma_i \backslash D_i} f_i(W_\ell) = \codim_{\Gamma_i \backslash D_i} Z_{i,\ell} + \codim_{\Gamma_i \backslash D_i}(f_i(S) ).
\end{displaymath}
for each $i\in\{1,2\}$.
That is, if $w_i\coloneqq \dim f_i(W)$, $d_i \coloneqq \dim \Gamma_i \backslash D_i$, and $s_i \coloneqq \dim f_i(S) $, then
\begin{equation}\label{eqtyp}
d_i - w_i=d_i -z_i+ d_i -s_i.
\end{equation}
Note that we do not assume that our period maps are immersive. Notice that 
\begin{displaymath}
W_\ell \subset (f_1\times f_2) ^{-1} (Z_{1,\ell} \times Z_{2,\ell}) \varsubsetneq S.
\end{displaymath}
Let $W_\ell^\prime$ be an irreducible component of $(f_1\times f_2)^{-1} (Z_{1,\ell} \times Z_{2,\ell})$ containing $W_\ell$. We claim that $W_\ell^\prime$ is an atypical intersection of positive period dimension in the Hodge locus $\HL(S,(\VV_1\oplus\VV_2)^{\otimes})$ for all $\ell$. For this, we need to show that
\begin{displaymath}
\codim_{ \Gamma_1 \backslash D_1 \times \Gamma_2 \backslash D_2}((f_1 \times f_2)(W_\ell^\prime)) < \codim_{ \Gamma_1 \backslash D_1 \times \Gamma_2 \backslash D_2}(Z_{1,\ell}\times Z_{2,\ell}) + \codim_{ \Gamma_1 \backslash D_1 \times \Gamma_2 \backslash D_2}((f_1\times f_2)(S)).
\end{displaymath}
Summing Equation \eqref{eqtyp} for $i=1,2$ we have
\begin{equation}
d_1+d_2 - w_1-w_2=d_1+d_2 -z_1-z_2+ d_1+d_2 -s_1 -s_2.
\end{equation}
This implies the requisite equation by noticing that
\begin{displaymath}
\max \{\dim (f_1 (Y)),\dim (f_2 (Y)) \} \leq \dim ((f_1 \times f_2) (Y)) \leq \dim (f_1 (Y))+\dim (f_2 (Y))
\end{displaymath}
for any $Y$ in $S$.
\end{proof}

\begin{lem}
Each of the following hold:
\begin{enumerate}
\item If the monodromy of $\VV_i$ is equal to the derived subgroup of its Mumford--Tate group $\G_i$, then
$(f_1\times f_2)(S) $ is not Hodge generic in $\Gamma_1 \backslash D_1 \times \Gamma_2 \backslash D_2$.

\item If the Mumford--Tate group $\G_i$ of $\VV_i$ is $\QQ$-simple, then $\G_1^{\ad}\cong\G_2^{\ad}$, $D_1\simeq D_2$. Denoting the corresponding Hodge datum by $(\G,D)$, we have that $(f_1\times f_2)(S) $ is Hodge generic in a modular correspondence
\[
\Gamma\backslash D\subset \Gamma_1 \backslash D \times \Gamma_2 \backslash D.
\]
\end{enumerate}
\end{lem}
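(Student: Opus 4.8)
The plan is to deduce both parts from \Cref{lalala} together with the Geometric Zilber--Pink theorem \Cref{geometricZP} applied to $\VV_1\oplus\VV_2$, followed by a Goursat-type analysis of adjoint Mumford--Tate groups; the outcome then feeds into \Cref{charisog} to finish the proof of \Cref{hodgethm}. Throughout, write $(\G_S,D_S)$ for the generic Hodge datum of $\VV_1\oplus\VV_2$. The projections $\VV_1\oplus\VV_2\to\VV_i$ realise $\G_S$ as a subgroup of $\G_1\times\G_2$ surjecting onto each factor, and $f_1\times f_2(S)$ is Hodge generic in $\Gamma_1\backslash D_1\times\Gamma_2\backslash D_2$ exactly when $\G_S=\G_1\times\G_2$.

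For (1), I would argue by contradiction: assume $f_1\times f_2(S)$ is Hodge generic, so $\G_S^{\ad}=\G_1^{\ad}\times\G_2^{\ad}$. By \Cref{lalala} the locus $\HL(S,(\VV_1\oplus\VV_2)^{\otimes})_{\pos,\atyp}$ is analytically, hence Zariski, dense in the irreducible variety $S$, so in \Cref{geometricZP} (applied to $\VV_1\oplus\VV_2$) the component $Z$ must be all of $S$. Case (a) there, namely that $S$ is a maximal atypical special subvariety of itself, is impossible, since the codimension inequality defining atypicality (\Cref{atypical}) contains the term $\codim_{\Gamma_S\backslash D_S}(\Gamma_S\backslash D_S)=0$ and can therefore never be strict. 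Hence case (b) holds: $\G_S^{\ad}$ splits as a nontrivial product $\HH^{\ad}_S\times\LL_S$, after an \'etale cover $f_1\times f_2=(\Phi_{\HH_S},\Phi_{\LL_S})$, and $S$ carries a Zariski-dense family of positive-period-dimensional fibers of $\Phi_{\LL_S}$ that are atypical weakly special. Running the codimension count for such a fiber, atypicality amounts to $\Phi_{\LL_S}$ not being dominant onto $\Gamma_{\LL_S}\backslash D_{\LL_S}$. Now the almost-simple factors of $\G_1^{\ad}\times\G_2^{\ad}$ split into those coming from $\G_1^{\ad}$ and those from $\G_2^{\ad}$, so $\LL_S$ splits compatibly as $\LL_S^{(1)}\times\LL_S^{(2)}$ with $\LL_S^{(j)}$ a product of factors of $\G_j^{\ad}$ and $\Phi_{\LL_S^{(j)}}$ a factor of $f_j$; moreover the hypothesis that the algebraic monodromy $\mathbf{H}_i$ of $\VV_i$ equals $\G_i^{\der}$, combined with the fact that a connected normal subgroup of $\G_1^{\der}\times\G_2^{\der}$ surjecting onto each factor is the whole group, shows the monodromy of $\VV_1\oplus\VV_2$ is the full product. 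One then shows that with full monodromy on each $\LL_S^{(j)}$ a non-dominant $\Phi_{\LL_S}$ with a Zariski-dense family of atypical positive-dimensional fibers cannot occur: such fibers would witness a nontrivial correlation between the two factors incompatible with Hodge genericity of $f_1\times f_2(S)$ — and here one leverages that the family $\{W_\ell\}$ produced by \Cref{typicallocus} consists of subvarieties that are typical for \emph{both} $\VV_1$ and $\VV_2$. This contradiction establishes (1).

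For (2), since $\G_i$ is $\Q$-simple its derived group $\G_i^{\der}$ is $\Q$-simple and nontrivial (the VHS is nonconstant, as $\HL(S,\VV_i^{\otimes})_{\pos,\typ}\neq\emptyset$), so by the theorem of Deligne and André the algebraic monodromy $\mathbf{H}_i$, being a normal $\Q$-subgroup of $\G_i^{\der}$, equals $\G_i^{\der}$; thus part (1) applies and $\G_S^{\ad}\subsetneq\G_1^{\ad}\times\G_2^{\ad}$. The projections $\G_S^{\ad}\to\G_i^{\ad}$ are surjective onto $\Q$-simple groups, so Goursat's lemma forces each $\G_S^{\ad}\cap\G_i^{\ad}$ to be trivial — a nontrivial one would be all of $\G_i^{\ad}$ and, with surjectivity onto the other factor, would give back the whole product, contradicting part (1). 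Hence $\G_S^{\ad}$ is the graph of an isomorphism $\phi\colon\G_1^{\ad}\xrightarrow{\ \sim\ }\G_2^{\ad}$, so $\G_1^{\ad}\cong\G_2^{\ad}$. Writing $D_i=\G_i^{\ad}(\R)^+\alpha_i$ for Hodge morphisms $\alpha_i\colon\bS\to\G_i^{\ad}$, the condition $(\alpha_1,\alpha_2)\in\G_S^{\ad}(\R)$ forces $\alpha_2=\phi\circ\alpha_1$, whence $D_S\cong D_1\cong D_2$; write $(\G,D)$ for this common datum. Finally $\phi$ is defined over $\Q$, so it carries $\Gamma_1$ into a group commensurable with $\Gamma_2$, making both projections $\Gamma\backslash D\to\Gamma_i\backslash D$ finite \'etale surjective and exhibiting $\Gamma\backslash D\subset\Gamma_1\backslash D\times\Gamma_2\backslash D$ as a modular correspondence; $f_1\times f_2(S)$ is Hodge generic in it by construction of $(\G,D)$ as the generic Hodge datum of $\VV_1\oplus\VV_2$. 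Plugging this into \Cref{charisog} then completes the proof of \Cref{hodgethm}.

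The delicate point is the last step of (1): excluding the Zariski-dense family of atypical weakly special fibers of $\Phi_{\LL_S}$ under the Hodge-generic assumption. This is the only place the hypothesis $\mathbf{H}_i=\G_i^{\der}$ is indispensable, and making it rigorous requires tracking precisely which almost-simple factors of $\G_1^{\ad}\times\G_2^{\ad}$ constitute $\LL_S$ and showing that full monodromy on those factors, together with the fact that the $W_\ell$ are simultaneously typical for $\VV_1$ and $\VV_2$, is incompatible with $\Phi_{\LL_S}$ dropping period dimension.
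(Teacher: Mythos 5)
Your overall architecture is the one the paper uses — \Cref{lalala} to get a Zariski-dense atypical positive-dimensional locus for $\VV_1\oplus\VV_2$, then \Cref{geometricZP} with $Z=S$, then a Goursat/graph argument feeding into \Cref{charisog} — and your part (2) is correct and is essentially a fleshed-out version of the paper's one-line appeal to the classification of strict special subvarieties of $\Gamma_1\backslash D_1\times\Gamma_2\backslash D_2$ surjecting onto both factors. The problem is part (1). By arguing by contradiction and dismissing case (a) of \Cref{geometricZP} as vacuous, you shift the entire content of the statement onto excluding case (b), and that exclusion is never actually carried out: you write that ``one then shows'' that a non-dominant $\Phi_{\LL_S}$ with a dense family of atypical fibers ``cannot occur'' because it ``would witness a nontrivial correlation between the two factors incompatible with Hodge genericity,'' and you yourself flag this as the delicate point requiring further work. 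As it stands, no argument is given for why full monodromy on the factors of $\LL_S$, together with the $W_\ell$ being simultaneously typical for $\VV_1$ and $\VV_2$, rules out the configuration in (b). Since case (a) has been declared impossible, part (1) is unproved.

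For comparison, the paper does not argue by contradiction: it reads case (a) directly as the desired conclusion (if $S$ is a maximal atypical special subvariety of itself, the atypicality inequality computed in the product forces $\codim(\Gamma_S\backslash D_S)>0$ there, i.e., non-Hodge-genericity), and it excludes case (b) by a different observation — full monodromy of each $\VV_i$ makes the projection of $f_i(S)$, and hence of $f_1\times f_2(S)$, positive-dimensional and Hodge generic in every factor, which is incompatible with the degenerate product structure (a distinguished factor $\LL_Z$ along which the dense atypical subvarieties are fibers) demanded by conclusion (b). If you want to keep your contradiction setup, you must supply the missing step, e.g., by showing that under full product monodromy the atypicality of the fibers of $\Phi_{\LL_S}$ forces $\Phi_{\LL_S}$ to drop period dimension in a way that contradicts the fact that the $W_\ell$ are \emph{typical} intersections for each $\VV_i$ separately (the codimension bookkeeping from \Cref{lalala}); merely naming the obstruction is not a proof. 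Note also that the gap propagates to your part (2), which invokes part (1) to get $\G_S^{\ad}\subsetneq\G_1^{\ad}\times\G_2^{\ad}$ before running Goursat.
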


\begin{proof}
By \Cref{lalala}, $S$ has a Zariski dense set of positive dimensional atypical components of the Hodge locus for the period map $f_1\times f_2$. Therefore $S$ satisfies conclusion (a) or (b) of \Cref{geometricZP}. If conclusion (a) holds, then $(f_1\times f_2)(S)$ is not Hodge generic in $\Gamma_1 \backslash D_1 \times \Gamma_2 \backslash D_2$, as desired. If the monodromy of $\VV_i$ is equal to the derived subgroup of its Mumford--Tate group $\G_i$, then the projection of $f_i(S)$ on every factor of $\Gamma_i\backslash D_i$ is positive dimensional and Hodge generic. The same is true for the period map $f_1\times f_2$. Therefore conclusion (b) is not satisfied. This finishes the proof of the first part of the lemma.

If the Mumford--Tate group $\G_i$ of $\VV_i$ is $\QQ$-simple, then the monodromy of $\VV_i$ is equal to the derived subgroup of its Mumford--Tate group $\G_i$ and by the first part, $(f_1\times f_2)(S)$ is not Hodge generic in $\Gamma_1 \backslash D_1 \times \Gamma_2 \backslash D_2$. Moreover in this situation, the only strict special subvarieties of $\Gamma_1 \backslash D_1 \times \Gamma_2 \backslash D_2$ whose projections on both factors are surjective are the modular correspondences. This forces $D_1\simeq D_2$ and $\G_1^{\ad}\cong\G_2^{\ad}$. This finishes the proof of the second part of the lemma.
\end{proof}

\Cref{hodgethm} is now a consequence of the characterization of isogenous $\ZZ$VHSs given in \Cref{charisog}.
\end{proof}

\begin{rmk}\label{rmkimplication} \Cref{hodgethm} (or, more precisely, a slight generalization of it) implies \Cref{thm11} for ball quotients, which we now explain. On the smaller ball quotient $S_n$, which we assume to be arithmetic for simplicity,
 we have a $\Z$VHS $\mathbb{V}_1$ induced by any faithful linear representation of $\operatorname{GU}(1,n)$. The Hodge locus for such a VHS does not depend on the linear representation by a theorem of Deligne and it is equal to the union of all sub-Shimura varieties. With the terminology introduced in \Cref{atypical}, a sub-Shimura variety is always in the typical Hodge locus
\begin{displaymath}
\HL(S_n,\mathbb{V}_1^\otimes)_{\pos}=\HL(S_n,\mathbb{V}_1^\otimes)_{\pos, \typ}
\end{displaymath}
and this locus is equal to the union of all totally geodesic subvarieties. Now a map $f: S_n \to S_m$ gives another VHS $\mathbb{V}_2$ by pulling back the natural one on $S_m$. Once this translation from immersions to Hodge theory is made, the proof of \Cref{thm11} is just a special case of the one given in \Cref{hodgethm}.
\end{rmk}

\section{Proofs of \Cref{mainthm12} and \Cref{richvhs0}}\label{mainthmonvhs}

In this section we prove the second part of \Cref{mainthm12} and then \Cref{richvhs0} following a similar argument. Let $\Gamma \subset \operatorname{PU}(1,n)$ for $n>1$ be a lattice of the simplest kind, which we can assume to be neat (i.e., no element of $\Gam$ has a nontrivial root of unity as an eigenvalue under the adjoint representation). Consider the smooth quasi-projective variety $S_\Gamma$ and let 
\begin{displaymath}
\rho: \Gamma \longrightarrow \HH(k)
\end{displaymath}
be a cohomologically rigid representation, i.e., $H^1(\Gamma , \operatorname{Ad}_\HH \rho)=0$, where
\begin{displaymath}
\Ad_\HH: \HH(k) \longrightarrow \Aut(\mathfrak{h})
\end{displaymath}
is the adjoint representation over some local field $k$. Assuming that $\rho$ is geodesically rich in the sense of \Cref{richdef}, we would like to show that $\rho$ extends. The proof is a generalization of the arguments from \cite[Sec.\ 4 and 5]{bu}, improved by combining them with \Cref{sectionhodge}.

\begin{proof}[Proof of \Cref{mainthm12} (2)]
The proof proceeds in two steps. The first is to build a rich $\Z$VHS, using the rigidity assumption, and the second is to use richness to finish the proof.

From the vanishing $H^1(\Gamma, \operatorname{Ad}_ \HH\rho)=0$ we have that $\rho$ underlies a $\C$VHS. This is essentially due to Simpson and is explained in detail in \cite[Sec.\ 4]{bu}. Moreover,
\begin{displaymath}
\Z (\rho)\coloneqq\Z [\operatorname{tr} \operatorname{Ad}_ \HH (\rho (\gamma)) : \gamma \in \Gamma]
\end{displaymath}
has a natural embedding in $\mathcal{O}_K$, the ring of integers of a totally real number field $K$. There are two ways to show that $\Z (\rho) $ lies in $\Oo_K$, as opposed to the ring of $S$-integers for some finite set of finite places $S$ of $K$. In \cite[Sec. 4]{bu} it is done by invoking the recent work of Esnault and Groechenig \cite{MR3874695}. However, here we can give a self-contained proof by applying \Cref{cor:Integral}.

From here forward we assume that $\rho (\Gamma)$ is Zariski dense in the $k$-group $\HH$ and will then prove that $\bfH(K)\cong G$. In particular, from the above we obtain a natural $K$-form of $\HH$ (that we denote by the same symbol $\HH$). Let $\sigma_i : K \to \R$ be the places of $K$, ordered in such a way that $\sigma_1$ is the identity for the given lattice embedding. Let $\widehat{\HH}\subseteq\operatorname{Res}_{K/\Q} \mathbf{H}$ and consider the $\Z$-local system
\begin{align*}
\hat{\rho}: \Gamma &\longrightarrow \widehat\HH (\R) \\
 \gamma &\longmapsto \hat{\rho} (\gamma)=(\sigma_i(\rho(\gamma))).
\end{align*}
Again by cohomological rigidity of $\rho$, we have that $\hat{\rho}$ naturally underlies a $\Z$VHS $\widehat{\mathbb{V}}$ (compare with \cite[Thm.\ 4.2.4]{bu}). As recalled in \Cref{hodgeprel}, $\widehat{\mathbb{V}}$ corresponds to a period map
\begin{displaymath}
\psi : S_{\Gamma}^{\an}\longrightarrow \widehat\HH (\Z) \backslash D.
\end{displaymath}
Now that we have a $\Z$VHS, as in \Cref{rmkimplication} we use the results of \Cref{sectionhodge} to prove the theorem. Let $S_\Gamma= \Gamma \backslash \mathbb{B}^n$ be the associated ball quotient, and $\mathbb{V}_1$ the $\Z$VHS on $S_\Gamma$ associated with some faithful linear rational representation of $\mathbf{G}$. By construction, $\HL(S_\Gamma, \mathbb{V}_1^{\otimes})_{\pos}$ is the union of the complex totally geodesic subvarieties of $S_{\Gamma}$, and it is equal to $\HL(S_\Gamma, \mathbb{V}_1^{\otimes})_{\pos,\typ}$; in particular it is nonempty.

It is straightforward to see that the fact that $\rho$ is complex rich implies that $\hat{\rho}$ is complex rich as well. Indeed, $S_{\Gamma_i}\subset S_\Gamma$ be a totally geodesic subvariety such that $\mathbf{H}_i=\overline{\rho(\Gamma_i)}$ has dimension smaller than $\dim \mathbf{H}$. Then $\dim \widehat{\mathbf{H}}_i:=\overline{\hat{\rho}(\Gamma_i)} < \dim \widehat{\mathbf{H}}$. Moreover $S_{\Gamma_i}$ lies in the $ \HL(S_\Gamma, \mathbb{V}_1^{\otimes})_{\pos,\typ}$, as well as $\HL(S_\Gamma, \widehat{\mathbb{V}}^{\otimes})_{\pos,\typ}$. Since this is true for a sequence of $i$ (the proof of) \Cref{hodgethm} implies that $\mathbb{V}_1,\widehat{\mathbb{V}} $ are isogenous which means that the period domain $\widehat\HH (\Z) \backslash D$ is isomorphic to $S_\Gamma$ and that $\psi$ is isogenous to the identity (as in \Cref{charisog}). Therefore $\hat\rho$ extends, which is only possible if the initial representation $\rho$ extends. That is, $\psi$ is an isomorphism. This concludes the proof of the desired statement.
\end{proof}

\begin{proof}[Proof of \Cref{richvhs0}]
The proof follows the same argument given above. The only difference is we are given a $\Q$VHS $\VV$ on the algebraic variety $S_\Gamma$. By applying \Cref{cor:Integral}, $\VV$ must be a $\Z$VHS, and the richness assumption implies that there is a generic sequence of subvarieties $S_{\Gamma_i}\in \HL(S_\Gamma, \VV^{\otimes})_{\pos,\typ}\cap \HL(S_\Gamma, \VV_1^\otimes)_{\pos,\typ}$. Therefore the results follows once again from \Cref{hodgethm}.
\end{proof}

\bibliographystyle{abbrv}
\bibliography{biblio.bib}

\Addresses

\end{document}